\numberwithin{equation}{section}
\theoremstyle{definition}
\newtheorem{definition}{Definition}[section] 
\newtheorem*{definition*}{Definition}
\newtheorem*{example*}{Example}
\theoremstyle{plain}
\newtheorem{theorem}[definition]{Theorem}
\newtheorem*{theorem*}{Theorem}
\newtheorem{proposition}[definition]{Proposition}
\newtheorem*{proposition*}{Proposition}
\newtheorem{corollary}[definition]{Corollary}
\newtheorem*{corollary*}{Corollary}
\newtheorem{lemma}[definition]{Lemma}
\newtheorem*{lemma*}{Lemma}
\theoremstyle{remark}
\newtheorem*{remark*}{Remark}
\newcommand{\N}{\mathbb{N}}
\newcommand{\R}{\mathbb{R}}
\newcommand{\C}{\mathbb{C}}
\renewcommand{\d}{\,\mathrm{d}} 	
\newcommand{\BV}{\mathrm{BV}}
\newcommand{\BD}{\mathrm{BD}}
\newcommand{\BVA}{\mathrm{BV}^{\mathbb{A}}}
\newcommand{\SOB}{\mathrm{W}} 
\newcommand{\CON}{\mathrm{C}} 
\newcommand{\LEB}{\mathrm{L}} 
\newcommand{\A}{\mathbb{A}}
\newcommand{\B}{\mathrm{B}}
\newcommand{\loc}{\mathrm{loc}}
\newcommand{\spann}{\mathrm{span}}
\newcommand{\com}{\mathrm{c}} 
\newcommand{\excess}{\mathbf{E}} 
\newcommand{\excessn}{\wtilde{\mathbf{E}}} 
\newcommand{\norm}[1]{\left\lVert#1\right\rVert} 
\DeclareMathOperator{\Tr}{Tr}
\DeclareMathOperator{\Div}{div}
\newcommand{\restr}[1]
{
	\bigr\rvert_{#1}
}
\providecommand{\gtrless}{\mathrel{\substack{>\\[-.18em]<}}}
\newcommand{\mres}[1]{\mathbin{\vrule height 1.6ex depth 0pt width
0.13ex\vrule height 0.13ex depth 0pt width 1.3ex}_{\,#1}}
\newcommand{\overbar}[1]{{\mkern 3mu\overline{\mkern-3mu#1\mkern-3mu}\mkern 1mu}}
\def\namedlabel#1#2{\begingroup
    #2%
    \def\@currentlabel{#2}%
    \phantomsection\label{#1}\endgroup
}
\newcommand{\eg}{\hbox{e.\,g.}} 
\newcommand{\ie}{\hbox{i.\,e.}}
\renewcommand{\mathcal}[1]{\mathscr{#1}}
\providecommand{\PARENS}[1]{\left(#1\right)}
\providecommand{\SQRT}[1]{\sqrt{#1}}
\providecommand{\wtilde}[1]{\widetilde{#1}}
\providecommand{\ccases}[1]{\begin{cases}#1\end{cases}}
\providecommand{\slashint}{\fint}
\providecommand{\coloneq}{\coloneqq}
\providecommand{\mbf}[1]{\mathbf{#1}}
\providecommand{\lcbrace}{\lbrace}
\providecommand{\rcbrace}{\rbrace}
\newcommand*\rel@kern[1]{\kern#1\dimexpr\macc@kerna}
\newcommand*\wbar[1]{%
  \begingroup
  \def\mathaccent##1##2{%
    \rel@kern{0.8}
    \overline{\rel@kern{-0.8}\macc@nucleus\rel@kern{0.2}}%
    \rel@kern{-0.2}%
  }%
  \macc@depth\@ne
  \let\math@bgroup\@empty \let\math@egroup\macc@set@skewchar
  \mathsurround\z@ \frozen@everymath{\mathgroup\macc@group\relax}%
  \macc@set@skewchar\relax
  \let\mathaccentV\macc@nested@a
  \macc@nested@a\relax111{#1}%
  \endgroup
}
\author{Matthias Bärlin}
\author{Konrad Keßler}
\date{\today}
\title{Partial Regularity for $\A$-quasiconvex Functionals}
\begin{document}

\begin{abstract}
We establish partial Hölder regularity for (local) generalised  minimisers of variational problems involving  strongly quasi-convex integrands of linear growth, where the full gradient is replaced by a first order homogeneous differential operator $\A$ with constant coefficients. Working under the assumption of $\A$ being $\C$-elliptic, this is achieved by adapting a method recently introduced in \autocite{GK2019, Gme2020}.
\end{abstract}
\maketitle
\section{Introduction}
\subsection{Variational Problems}
The analysis of functionals taking the form $\mathcal{F}=\int_{\Omega}f(\nabla u)\d x$ is a major task in the calculus of variations with a long standing tradition. Let us suppose that $\Omega\subset \R ^{n}$ is open and bounded and that $u$ is a weakly differentiable $\R ^{N}$-valued map. The growth condition on the integrand $f\in\CON (\R ^{N\times n})$ determines the functional analytic environment in which we analyse $\mathcal{F}$. A standard growth assumption, which has been studied intensively  in the field, constitutes the following: There exist $p\in [1,\infty )$ and a constant $L>0$ such that for all $z\in \R ^{N\times n}$ we have
\begin{equation}
    |f(z)|\leq L\left(1+|z|^{p}\right).
\end{equation}

The existence of minimisers within a given class of functions (or maps) is a fundamental question. Concretely, we want to minimise the functional $\mathcal{F}$ in $g+\SOB ^{1,p}_{0}(\Omega;\R ^{N})$ for a prescribed Dirichlet boundary datum $g\in \SOB ^{1,p}(\Omega;\R ^{N})$. In the super-linear growth case $p>1$, this task can immediately be tackled by means of the direct method, a lower semi-continuity method dating back to Tonelli. Consequently, the question arises under which assumptions on $f$ is the functional $\mathcal{F}$ sequential weakly lower semi-continuous in $\SOB ^{1,p}(\Omega;\R ^{N})$. Towards this question, convexity certainly suffices, but in the vectorial case $(N>1)$ it is seen to not be a necessary condition. Ball and Murat \autocite{BM1984} have shown that Morrey's notion of quasi-convexity \autocite{Mo1952}, \ie, for all $z\in \R^{N\times n}$ and all $\zeta \in \CON ^{1}_{\com}(\mathbb{B};\R ^{N})$ we have
$$ f\left(z\right)\leq \fint_{\mathbb{B}}f\left(z+\nabla \zeta\right)\d x,$$
turns out to be necessary. In fact, for unsigned integrands $f$, quasi-convexity is also a sufficient condition, as \citeauthor{AF1984} have shown in \autocite{AF1984}. 

Once having addressed the issue of existence of minimisers, we would like to know what further information about a minimiser we can extract. This question dates back to David Hilbert \autocite{Hil1900} and is today known by the name of regularity theory in the calculus of variations. There are many different notions of regularity and in our setting, we are interested whether a minimiser is (locally) of the class $\CON^{1,\alpha}$. In the scalar case $(N=1)$ the notions of convexity and quasi-convexity coincide and the regularity theory, at least in the quadratic growth case, reduces to the regularity of solutions of elliptic equations established by De Giorgi \autocite{Gio1957}, Nash \autocite{Nash1958} and Moser \autocite{Mo1960}. However, in the vectorial case, the regularity of minimisers can no longer be extracted from the Euler-Lagrange equation only, because, as various counterexamples show \autocite{Gio1968,Nec1977,Fr1970}, there is no such theory for elliptic systems in general. Furthermore, full Hölder regularity can no longer be expected since minimisers may be unbounded within a small set. Adapting ideas from geometric measure theory developed by Almgren \autocite{Alm1968} and De Giorgi \autocite{Gio1961}, Evans established in the non-parametric setting a fundamental \textit{partial} regularity result assuming a stronger notion of quasi-convexity \autocite{Eva1986}. This means that a minimiser enjoys Hölder regularity outside a small set.  We stress that partial regularity is a feature of the vectorial case. After the quadratic case, the super-quadratic case ($p\geq 2$) was established by Acerbi and Fusco in \autocite{AF1987}. Later on, the sub-quadratic case ($1<p\leq 2$) was resolved partially by Carozza and Passarelli di Napoli in \autocite{CP1996} and then fully by \citeauthor{CFM1998} in \autocite{CFM1998}. Some time later, even the Orlicz growth case has been resolved by \citeauthor{DLSV2012} in \autocite{DLSV2012}. An overview of related results can be found in \autocite{Min2006, Min2008, Bec2016, Giu2003}. The case of linear growth for quasi-convex integrands, however, had remained an open problem, since the classical methods were bound to fail due to the lack of weak compactness. Only in the recent years, partial local Hölder regularity of the (distributional) gradient of (local) $\BV$-minimisers in the quasi-convex setting has been established by \citeauthor{GK2019} in \autocite{GK2019}.

Let us try to roughly describe the underlying ideas on how to obtain partial Hölder regularity of the weak gradient of a minimiser $u$ of $\mathcal{F}$: The main objective is to prove a decay estimate for the excess of $u$. The excess is a quantity that, similar to Campanato's semi-norm, measures by means of integrals the rate of oscillation of $\nabla u$. The goal is to show that if the excess is small enough, it decays with any rate $\alpha \in (0,1)$. By means of a Caccioppoli Inequality, one passes from the excess, which depends on the gradient, to a quantity depending merely on $u$. The Caccioppoli Inequality, in turn, builds on the combination of minimality and a stronger notion of quasi-convexity by means of Widman's hole-filling trick. On the level of order $0$, the strategey then is to approximate the minimiser by a $\mathcal{B}$-harmonic map $h$, where $\mathcal{B}$ denotes a strongly Legendre-Hadamard elliptic bilinear form on $\R^{N\times n}$. The map $h$ has a good decay since it solves a homogeneous elliptic system. The difficulty is to construct $h$ in such a way that $u-h$ has good decay as well. Classically, the construction of $h$ follows an indirect approach utilising compactness, for example, of the embedding $\SOB^{1,2}(\mathbb{B})\to \LEB^{2}(\mathbb{B})$. This kind of approximation goes by the name of $\mathcal{B}$-\textit{Harmonic Approximation Lemma} \autocite{Gio1961,DG2000,DM2009}.

Many generalisations of the functional $\mathcal{F}$ have been studied. Here, we are going to replace the full gradient with a first order homogeneous differential operator $\A$. The two most prominent examples are the symmteric gradient $\varepsilon$ and the trace-free symmetric gradient $\tilde{\varepsilon}$. Let $V$ and $W$ be two real and finite dimensional Hilbert spaces. We are going to consider differential operators of the form
$$ \A=\sum\limits_{\alpha =1}^{n}\A _{\alpha}\partial _{\alpha}, \ \A_{\alpha}\in \mathcal{L}(V;W).$$
For $ \xi\in \mathbb{K}^{n}$, $\mathbb{K}=\R, \C$, the  linear map $\A[\xi]v=\sum_{\alpha =1}^{n}\xi _{\alpha}\A _{\alpha}v$, modulo a factor of $-i$, is called the \textit{symbol map} associated to the differential operator $\A$. We say that $\A$ is $\mathbb{K}$-\textit{elliptic} if the symbol map $\A[\xi]$ is \textit{one-to-one} for all $\xi\in \mathbb{K}^{n}\setminus\{0\}$ (\autocite{Spe1969, Smi1970, Kal1994}). The notion of $\R$-ellipticity has been characterised by means of Fourier multipliers \autocite{Mi1956} and singular integrals \autocite{CZ1956} that the Korn-type Inequality 
$$\forall p\in (1,\infty)\exists C>0\forall \zeta \in \CON ^{\infty}_{\com}(\R ^{n};V)\colon \ \norm{\nabla \zeta}_{\LEB^{p}}\leq C \norm{\A \zeta}_{\LEB^{p}}$$
is satisfied by the differential operator $\A$. In the super-linear growth case, Conti and Gmeineder showed that this allows to reduce the question of partial Hölder regularity of a local minimiser of a functional of the form $$\mathcal{F}[u;\Omega]=\int_{\Omega}f\left(\A u\right) \d x,$$
where $f\in\CON (W)$ is of $p$-growth ($p>1$), to the full gradient case \autocite{CG2020}.  Ornstein's \textit{Non-Inequality} \autocite[Theorem 1]{Orn1962}, \autocite{CFM2005,KK2011}, stating that there is no non-trivial Korn Inequality in the $\LEB ^{1}$-setting, implies that such a reduction is impossible in the linear growth regime.  Consequently, it had constituted a highly non-trivial task to adapt the full gradient case \autocite{GK2019} to the symmetric gradient case \autocite{Gme2020} in the linear growth regime.

\subsection{Partial Hölder Regularity in the Linear Growth Regime} 
In the convex case, partial regularity for linear growth functionals has been known in the convex context following the work of \citeauthor{AG1988} \autocite{AG1988} (also see \autocite{Sch2014, Gme2020-dirichlet} for variations of this theme). However, the methods employed therein are confined to the convex case. In the linear growth context, the key difficulty to overcome is the lack of weak compactness. This concerns the existence of minimisers as much as their regularity theory. In particular, this excludes indirect methods like the by now classical $\mathcal{B}$-\textit{Harmonic Approximation Lemma} \autocite{DG2000, DM2009, DGK2005}. A direct approach was needed to construct a $\mathcal{B}$-harmonic approximation. \citeauthor{GK2019} solved this problem by showing that the traces of $\BV$-maps on spheres of radius $R$ enjoy for $\mathcal{L}^{1}$-almost all sufficiently small radii $R$ more regularity than the default $\LEB^{1}$-regularity. This is called a \textit{Fubini-type property} of $\BV$-maps and  it was in effect the key point to construct a $\mathcal{B}$-harmonic approximation by solving the elliptic system

\begin{equation*}
\ccases{
    \begin{aligned}
    -\Div\left(\mathcal{B} \nabla h\right) &= 0 &\quad& \text{in}\  B_{R}(x_{0})\\
     h &= u &\quad& \text{on}\ \partial B_{R}(x_{0})
    \end{aligned}
    }.
\end{equation*}
We note that the solution operator  
$$(\CON^{0} \cap \LEB ^{1})\left(\partial B_{R}(x_{0});\R ^{N}\right)\ni u\mapsto h \in \SOB^{1,1}\left(B_{R}(x_{0});\R ^{N}\right)$$
associated to this system cannot be bounded as an operator from $\LEB ^{1}$ to $\SOB ^{1,1}$. In other words, without more regularity of $\Tr_{B_{R}(x_{0})}(u)$ we lack tools to precisely measure how close the $\mathcal{B}$-harmonic map $h$ is to $u$. This is why the \textit{Fubini-type property} of $\BV$-maps is essential for a direct approach in the linear growth regime.

\citeauthor{Gme2020} was able to adapt the ideas used in \autocite{GK2019} to the scenario where the full gradient is replaced by the symmetric gradient \autocite{Gme2020}. The main difficulties were to prove a Fubini-type property and, building on the latter, to prove precise estimates  for $u-h$, where $h$ denotes a suitable $\mathcal{B}$-\textit{harmonic approximation} of $u$.

\subsection{The Main Theorem}

Our scope is to show that the method for the symmetric gradient case extends to an entire class of first order homogeneous differential operators with constant coefficients, namely the class of $\C$-elliptic operators.

In line with the full \autocite{GK2019} and symmetric gradient case \autocite{Gme2020}, we will work from now on under the following assumptions:

\begin{itemize}
    \item[\namedlabel{itm:H0}{(H0)}] The differential operator $\A$ is $\C$-elliptic.
    \item[\namedlabel{itm:H1}{(H1)}] The integrand $f\in \CON^{2,1}_{\loc}(W)$ is of linear growth.
    \item[\namedlabel{itm:H2}{(H2)}] The integrand $f$ is strongly $V_{1}$-$\A$-quasi-convex, where $V_1$ denotes the reference integrand to be defined in the the upcoming section on preliminaries: There exists $\nu >0$ such that $F=f-\nu V_{1}\circ |\cdot |$ is $\A$-quasi-convex, \ie, for all $w\in W$ and all $\zeta\in \CON^{\infty}_{\com}(\mathbb{B})$ we have
    $$F(w)\leq \fint_{\mathbb{B}}F\left(w+\A \zeta\right)\d x. $$
\end{itemize}

For $\omega\subset \R^{n}$ open and bounded we associate to the integrand $f$ the functional $\mathcal{F}[u;\omega]=\int_{\omega}F(\A u (x))\d x$, where $u\in \SOB^{\A,1}(\omega)$.

We recall that to any $\R$-\textit{elliptic potential} $\A$ exists an \textit{annihilator}, see \autocite{VS2013}, $\mathcal{A}=\sum_{|\alpha|=k}\mathcal{A}_{\alpha}\partial_{\alpha}$ with $\mathcal{A}_{\alpha} \in \mathcal{L}(W;V) $ such that the \textit{symbol complex}
\[
V \xrightarrow{\A[\xi]} W \xrightarrow{\mathcal{A}[\xi]} V
\]
is exact for all $\xi \in \R ^{n}\setminus \{0\}$. This shows that the notion of $\A$-quasi-convexity is equivalent to the more widely known notion of $\mathcal{A}$-quasi-convexity which is strongly linked to weak sequential lower semi-continuity of the functional $\mathcal{F}$, which Fonseca and Müller showed in \autocite{FM1999}.

We fix $\Omega\subset \R ^{n}$ an open and bounded Lipschitz domain and $g\in \SOB^{\A,1}(\Omega)$ a prescribed boundary datum. Due to the lack of weak compactness, analogously to the full gradient case, the task to minimise $\mathcal{F}$ within a given Dirichlet class $g+\SOB^{\A,1}_{0}(\Omega)$ cannot be tackled by plainly applying the \textit{direct method}. Hence, we pass from the Sobolev-type space $\SOB^{\A,1}$ to the $\BV$-type space $\BVA$, hoping for better compactness with respect to the \textit{weak}$^{*}$-topology on the latter space. Already in the full gradient case, this forces us to somehow relax our functional $\mathcal{F}$ to this larger space $\BVA$. Without the assumption of $\C$-ellipticity, this relaxation procedure cannot be implemented analogously:  We recall that in the full gradient case, Alberti's rank-one result \autocite{Al1993} for $\BV$-maps has proven to be essential in order to obtain an integral representation of the Lebesgue-Serrin extension \autocite{AD1992}. Using that quasi-convexity implies rank-one convexity, Alberti's result ensures that the strong recession function of a quasi-convex integrand with linear growth is well-defined on the rank-one cone. The requisite result paralleling Alberti's result in the $\R$-elliptic case has been established by \citeauthor{PR2016} in \autocite{PR2016}. \textit{Weak}$^{*}$-compactness of closed, norm bounded sets in $\BVA$ as well as the existence of a strictly continuous and linear trace operator $\Tr _{\Omega}\colon \BVA (\Omega)\to \LEB ^{1}(\partial \Omega)$ for open and bounded Lipschitz domains are two features \textit{exclusive} to the $\C$-elliptic case \autocite[Theorem 1.1]{BDG2017}, \autocite[Theorem 1.1]{GR2019}. Since  the trace-operator on $\BVA$ is discontinuous with respect to weak$^{*}$-convergence, the boundary condition is no longer reflected by the space $\BVA$ but rather by the relaxed functional itself. The boundary condition then is encoded by a so called penalty term, which already pops up in the full gradient case \autocite{KR2010}:
$$\textnormal{P}_{f,\Omega,g}[u]=\int_{\partial \Omega}f^{\infty}\big( \nu _{\partial \Omega}\otimes_{\A}\Tr_{ \Omega}(u-g)\big)\d \mathcal{H}^{n-1}, $$
where $f^{\infty}(w)=\limsup_{t\to\infty}\frac{f(tw)}{t}$ denotes the strong recession function and $\xi\otimes_{\A}v=\sum_{\alpha=1}^{n}\xi _{\alpha}\A _{\alpha}v$ for $\xi \in \R^{n}$ and $v\in V$. We stress that it is necessary to assume that $\A$ is $\C$-elliptic in order for this integral expression to be well-defined. The relaxed functional then takes the form (\autocite[Section 5]{BDG2017}, \autocite{ADR2020})
$$\overline{\mathcal{F}}_{g}[u;\Omega]=\int_{\Omega}f(\A u) +\textnormal{P}_{f,\Omega,g}[u]. $$

We note that \ref{itm:H2} implies that there exist $b\in \R$ and $c>0$ such that for all $\zeta\in g+\SOB^{\A,1}_{0}(\Omega)$ we have $\mathcal{F}[\zeta;\Omega]\geq c V_{1}(\fint_{\Omega}|\A \zeta|\d x)+b$. This can be inferred from an extension and gluing argument similar to the argument in \autocite[218--219]{CK2017}. Identifying $\overline{\mathcal{F}}_{g}[u;\Omega]$ as the Lebesgue-Serrin extension \autocite[Section 5]{BDG2017}
$$\overline{\mathcal{F}}_{g}[u;\Omega]=\inf\Big\{\liminf_{j\to\infty}\mathcal{F}[u_{j};\Omega]\colon\ (u_{j})\subset g+\SOB^{\A,1}_{0}(\Omega),\ u_{j}\stackrel{\BVA}{\rightharpoonup^{*}} u \Big\}$$
yields coercivity of the relaxation. Hence, under our assumption, generalised minimisers  subject to a given Dirichlet boundary condition exist by means of the direct method. Since we are only striving for a \textit{local} regularity result, it is natural to consider the class of \textit{local generalised minimisers}:
\begin{definition*}
We call a $\BVA _{\loc}(\Omega)$-map $u$ local generalised minimiser of $\mathcal{F}$ if for any $\omega \Subset \Omega$ open and bounded Lipschitz domain and all $\zeta \in \BVA (\omega)$ we have
$$\overline{\mathcal{F}}_{u}[u;\omega]\leq  \overline{\mathcal{F}}_{u}[\zeta;\omega].$$
\end{definition*}

At this stage, we are ready to formulate the main theorem:

\begin{theorem}
Let us assume that \ref{itm:H0}, \ref{itm:H1}, and \ref{itm:H2} hold. Furthermore, let $u\in \BVA _{\loc}(\Omega)$ be a local generalised  minimiser of the to $f$ associated functional $\mathcal{F}$. Let $\alpha \in (0,1)$, let $M>0$ and let $B=B_{r}(x_{0})\Subset \Omega$ be a ball. Then there exists $\varepsilon>0$ depending on $M,\alpha, F,n,d_{V}, d_{W}$ and $\frac{L}{\nu}$ such that whenever we have
$$ |(\A u)_{B}|\leq M,\ \textnormal{and}\ \fint_{B}V_{1}\left (\A u-(\A u)_{B}\right)<\varepsilon,$$
then $u\restr{B}$ belongs to the class $\CON ^{1,\alpha}(B;V)$. In particular, the singular set $\Sigma_{u}$ defined by
$$\left\{x\in \Omega\colon \limsup_{r\to 0}|(\A u)_{B_{r}(x)}|=+\infty\right\}\bigcup\left\{x\in \Omega\colon \liminf_{r\to 0}\fint_{B}V_{1}\left (\A u-(\A u)_{B}\right)>0\right\}$$
is a relatively closed Lebesgue-null-set and we have $u\in \CON^{1,\alpha}_{\loc}(\Omega\setminus \Sigma_{u};V)$  for all $\alpha \in (0,1)$.

\end{theorem}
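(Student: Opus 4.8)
The plan is to run the excess--decay scheme of De Giorgi--Almgren type in the form adapted to the linear growth regime in \autocite{GK2019, Gme2020}, replacing the full gradient throughout by $\A$. After a translation and rescaling we may take $x_0=0$ and $r=1$. Since the reference integrand satisfies $V_1(t)\simeq t^2$ for $t$ small and $V_1(t)\simeq t$ for $t$ large, the natural oscillation quantity is the \emph{excess}
$$\excess(x,\rho)\coloneq \fint_{B_\rho(x)}V_1\bigl(\A u-(\A u)_{B_\rho(x)}\bigr)\d y,\qquad B_\rho(x)\Subset\Omega .$$
It suffices to prove an \emph{excess decay iteration}: given $M$ and $\alpha$ there exist $\theta\in(0,\tfrac14)$ and $\varepsilon>0$, depending only on the stated data, such that if $|(\A u)_{B_\rho(x)}|\le 2M$ and $\excess(x,\rho)<\varepsilon$ then $|(\A u)_{B_{\theta\rho}(x)}|\le 2M$ and $\excess(x,\theta\rho)\le\theta^{2\alpha}\,\excess(x,\rho)$. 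Iterating this at $x=0$, where the hypotheses hold with the constants $M,\varepsilon$ of the theorem, yields $\excess(0,\rho)\le C\rho^{2\alpha}$; the same bound persists on a neighbourhood of $0$ since the two smallness conditions are open in $x$. The integral (Campanato) characterisation of Hölder spaces then gives $\A u\in\CON^{0,\alpha}$ near $0$, in particular with vanishing singular part there, and $\C$-ellipticity of $\A$ --- whose null space consists of polynomials and which obeys interior Schauder-type estimates --- upgrades this to $u\restr{B}\in\CON^{1,\alpha}(B;V)$.

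The iteration rests on two ingredients. The first is a \emph{Caccioppoli inequality of the second kind}: for every affine $\ell$ with $\A\ell\equiv(\A u)_{B_\rho(x)}$,
$$\excess\bigl(x,\tfrac{\rho}{2}\bigr)\ \lesssim\ \fint_{B_\rho(x)}V_1\Bigl(\tfrac{u-\ell}{\rho}\Bigr)\d y\ +\ (\text{lower order}),$$
proved by comparing $u$ with $u-\varphi(u-\ell)$ for a cut-off $\varphi$, combining local generalised minimality with strong $V_1$-$\A$-quasi-convexity \ref{itm:H2} via Widman's hole-filling trick; on the interior balls considered, the penalty term and the singular part of $\A u$ are absorbed as in \autocite{BDG2017, GK2019}. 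The second ingredient freezes the integrand: set $\mathcal{B}\coloneq D^2 f\bigl((\A u)_B\bigr)$. By \ref{itm:H2} (writing $f=F+\nu\,V_1\circ|\cdot|$ with $F$ $\A$-quasi-convex, hence $\A$-rank-one convex) together with \ref{itm:H1}, $\mathcal{B}$ is a strongly Legendre--Hadamard elliptic bilinear form on $W$ with ellipticity ratio controlled by $M$, $L$ and $\nu$. Since $\A$ is $\C$-elliptic, the constant-coefficient system attached to $\mathcal{B}$ and $\A$ is elliptic, so the $\mathcal{B}$-harmonic comparison map $h$ --- the minimiser of $\zeta\mapsto\int_{B_R}\mathcal{B}[\A\zeta,\A\zeta]\d y$ subject to $\zeta=u$ on $\partial B_R$ --- is smooth inside $B_R$ and enjoys $\fint_{B_\rho}|\A h-(\A h)_{B_\rho}|^2\lesssim(\rho/R)^2\fint_{B_R}|\A h-(\A h)_{B_R}|^2$.

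The heart of the matter, and the expected \textbf{main obstacle}, is to make the approximation of $u$ by $h$ \emph{quantitative}. In the linear growth regime the classical indirect $\mathcal{B}$-harmonic approximation lemma is unavailable --- there is no weak compactness, and Ornstein's Non-Inequality obstructs any reduction to the super-linear case --- so one must argue directly along the lines of \autocite{GK2019, Gme2020}. Using that $\C$-ellipticity of $\A$ endows $\BVA$ with a strictly continuous trace operator into $\LEB^1(\partial\Omega)$ \autocite[Theorem 1.1]{BDG2017}, \autocite[Theorem 1.1]{GR2019}, one establishes a \emph{Fubini-type property}: for $\mathcal{L}^1$-almost every $R\in(\tfrac12,1)$ the trace $\Tr_{B_R}u$ lies in $\SOB^{1,1}(\partial B_R;V)$, with $\int_{1/2}^{1}\norm{\Tr_{B_R}u}_{\SOB^{1,1}(\partial B_R)}\d R$ bounded in terms of the total variation of $\A u$ on $B_1$. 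Picking such an $R$ and the corresponding $h$, the comparison estimate $\fint_{B_R}V_1(\A u-\A h)\lesssim\sigma\bigl(\excess(0,1)\bigr)\,\excess(0,1)$, with $\sigma$ a modulus vanishing at $0$, follows by testing the Euler--Lagrange relation for $u$ against $u-h$ (legitimate since the traces agree on $\partial B_R$) and Taylor-expanding $f$ about $(\A u)_B$, the remainder being controlled through \ref{itm:H1} and $f\in\CON^{2,1}_{\loc}$; it is exactly the $\SOB^{1,1}(\partial B_R)$-control supplied by the Fubini property that renders this estimate quantitative, since the solution operator $\LEB^1(\partial B_R)\to\SOB^{1,1}(B_R)$ is unbounded. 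Carrying the Fubini-type property and this comparison over from the symmetric gradient to a general $\C$-elliptic operator $\A$ is where the genuinely new work lies.

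Combining the Caccioppoli inequality, the decay of $\A h$, and the comparison estimate, for small $\theta$ one obtains
$$\excess(0,\theta)\ \lesssim\ \theta^2\,\excess(0,1)\ +\ \theta^{-n}\,\sigma\bigl(\excess(0,1)\bigr)\,\excess(0,1)\ \le\ \bigl(C\theta^2+C\theta^{-n}\sigma(\varepsilon)\bigr)\,\excess(0,1);$$
choosing first $\theta$ with $C\theta^2\le\tfrac12\theta^{2\alpha}$ and then $\varepsilon$ with $C\theta^{-n}\sigma(\varepsilon)\le\tfrac12\theta^{2\alpha}$ closes the iteration, while $|(\A u)_{B_\theta}-(\A u)_{B_1}|\lesssim\theta^{-n}\sqrt{\excess(0,1)}$ keeps the relevant average below $2M$ after one further shrinking of $\varepsilon$. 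Finally, for the singular set: at $\mathcal{L}^n$-almost every $x\in\Omega$ the measure $\A u$ has an approximate limit, so each of the two sets in the definition of $\Sigma_u$ is $\mathcal{L}^n$-null; and $\Sigma_u$ is relatively closed because its complement contains the open set of those $x$ for which $|(\A u)_{B_\rho(x)}|<M$ and $\excess(x,\rho)<\varepsilon$ for some admissible $\rho$, on which set the argument above yields $\CON^{1,\alpha}$-regularity.
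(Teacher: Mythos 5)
Your scaffolding (Caccioppoli of the second kind, freezing $\mathcal{B}=D^{2}f((\A u)_{B})$, excess--decay iteration, Campanato characterisation, measure-theoretic treatment of $\Sigma_{u}$) matches the paper, but the two load-bearing steps are not right as stated. First, the Fubini-type property you invoke is too strong and in fact false: for $u\in\BVA$ (already for full-gradient $\BV$, e.g.\ the characteristic function of a smooth set) the restriction to a.e.\ sphere is \emph{not} in $\SOB^{1,1}(\partial B_{R})$ --- a.e.\ sphere meets the jump set in an $(n-2)$-dimensional set, so the trace is only of bounded variation on the sphere, and for general $\C$-elliptic $\A$ an $\LEB^{1}$-control of full tangential derivatives by $\A u$ is additionally obstructed by Ornstein's Non-Inequality. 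What the paper proves (Theorem \ref{thm:fubini-type}), and what the argument genuinely needs, is the fractional statement: for $\mathcal{L}^{1}$-a.e.\ radius the trace lies in $\SOB^{\frac{1}{n+1},\frac{n+1}{n}}(\partial B_{r})$ with seminorm controlled by $\fint_{B_{2r}}|\A u|$, obtained from the embedding for elliptic and canceling operators (Lemma \ref{lem:bva-slobodeckij-embedding}) together with an averaging over radii; and it is precisely for boundary data in this fractional space that the estimate \eqref{estimate elliptic system homogeneous with boundary datum} for the comparison map $h$ is available.

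Second, and more seriously, your comparison estimate is obtained by ``testing the Euler--Lagrange relation for $u$ against $u-h$'', yielding a gradient-level smallness $\fint_{B_{R}}V_{1}(\A u-\A h)\lesssim\sigma(\excess)\,\excess$. This step does not go through: $u$ is only a generalised $\BVA_{\loc}$-minimiser, so there is no Euler--Lagrange identity admitting test maps whose $\A$-gradient is a measure without substantial extra work; $u-h$ has measure derivatives while $\A h\in\LEB^{\frac{n+1}{n}}$ only, so the relevant pairings are not even defined; and the Legendre--Hadamard form $\mathcal{B}$ is not coercive on such differences (coercivity stems from quasi-convexity and is confined to Sobolev perturbations), so no quantitative closeness of \emph{gradients} can be extracted --- such closeness is exactly the strong compactness that the linear-growth regime lacks. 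The paper's route (Theorem \ref{theorem b-harmonic approximation}) is different: approximate $u$ within its Dirichlet class, apply Ekeland's variational principle (Lemma \ref{lem:ekeland}) to produce almost-minimisers satisfying a perturbed Euler--Lagrange \emph{inequality}, and then run a duality/truncation argument, solving $\A^{*}(\mathcal{B}\A\Phi_{\varepsilon})=\mathbf{T}\circ\Psi_{\varepsilon}$ and using the $\SOB^{1,\infty}$-bound of Lemma \ref{lem:estimates elliptic stystems}\,(ii), which yields the \emph{function-level} estimate $\fint_{B}V_{1}\big(\frac{u-a-h}{r}\big)\leq C\big(\fint_{B}V_{1}(\A(u-a))\big)^{q}$ with $q>1$. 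It is this zeroth-order estimate, combined with Caccioppoli (to convert the excess at scale $\sigma R$ into a function-level quantity) and the interior $\CON^{2}$-decay of $h$, that closes the iteration; your final combination implicitly requires exactly this function-level bound, not the gradient-level one you claim. The remaining outline (propagation of the smallness conditions, Campanato step --- which in the paper requires polynomial corrections from $\mathcal{N}(\A)$ --- and the statement about $\Sigma_{u}$) is consistent with the paper.
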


We wish to point out that for the present paper, the assumption of $\C$-ellipticity is crucial and visible on several stages (so \eg\ in the very definition of the functionals where boundary traces come into play); the elliptic case, however, seems to require refined methods.

\section{Preliminaries}

\subsection{Notation}
For a finite dimensional real vector space $Z$ we use the shorthand notation $d_{Z}=\dim_{\R}Z$. Furthermore, we will suppress the target vector space when dealing with different function spaces. For example, if $u\colon \Omega \to Z$ is a $\LEB ^{1}$-map, we simply write $u\in \LEB^{1}(\Omega)$ instead of $u\in \LEB^{1}(\Omega;Z)$. Within the context, it will be clear which target vector space we are referring to. Furthermore, by $|\cdot |$ we will denote any norm of a finite dimensional, normed vector space such as $\mathcal{L}(V;W)$, $V$ or $\mathcal{L}(V\times V;W)$. Since all norms of a finite dimensional normed vector space are equivalent, this is an non-problematic convention. Throughout, we fix an orthonormal basis $(v_{j},...,v_{N})$ of $V$, \ie, $N=d_{V}$. Furthermore, $$\left(e_{jk}\right)_{j=1,k=1}^{j=N,k=n}=\delta_{jk}$$ denotes the standard basis of $\R^{N\times n}$.

Integration with respect to the $(n-1)$-dimensional Hausdorff-measure $\mathcal{H}^{n-1}$ will be denoted by $\d \sigma _{x}$, where $x$ is integration variable.

As usual, $B_{r}(x_{0})\subset \R ^{n}$ denotes the open ball with centre $x_{0}$ and radius $r>0$. Often we will suppress the centre of the ball if it is clear within the context and we will simply write $B_{r}$. Furthermore, we put $\mathbb{B}=B_{1}(0)$ and $\mathbb{S}=\partial\mathbb{B}$.

By $\mathcal{M}(\Omega;Z)$ we denote the space of $Z$-valued finite Radon measures on $\Omega$. For $\mu \in \mathcal{M}(\Omega)$ and open, bounded subsets $\omega \subset \Omega$, the total variation-measure of $\mu$ will be denoted by $|\mu|$ and the average of $\mu$ on $\omega$ with respect to the Lebesgue measure will be written as
$(\mu)_{\omega}\coloneq\frac{\mu(\omega)}{\mathcal{L}^{n}(\omega)}.$

Let $V_{1}(t)=\sqrt{1+t^{2}}-1$ denote the \textit{reference integrand}. We will use the shorthand notation $V_{1}(z)=V_{1}(|z|)$.

For a $\CON ^{1}$-function $G\colon Z\to \R $ and $z_{0}\in Z$ we denote by
$$(G)_{z_{0}}\colon Z\to \R,\ z\mapsto G(z_{0}+z)-\Big(G(z_{0})+DG(z_{0})[z]\Big)$$
the \textit{linearisation} of $G$ at $z_{0}$.

For $k\in\N$ we denote by $\mathcal{P}_{k}(Z)$ the vector space of all polynomials $$p\in \R [X_{1},...,X_{n}]\otimes_{\R}Z$$ of degree at most $k$.

By $C$ we will denote a generic constant which may vary from line to line. Since it is very important throughout on which parameters a constant depends on, we will write for example $C(M,p)$ if the constant depends on $M$ and $p$. 

For $\xi \in \R^{n}$ and $v\in V$ we put $\xi\otimes_{\A}v=\sum_{\alpha=1}^{n}\xi _{\alpha}\A _{\alpha}v$. Furthermore, we denote by
$$\mathcal{R}(\A)\coloneq\spann\{\xi\otimes_{\A} v\colon \xi \in \R^{n},v\in V\} $$
the \textit{effective range} of $\A$ and we call
$$\mathcal{N}(\A)\coloneq\{u\in \mathcal{D}^{*}\colon \A u=0\}$$ the \textit{null-space} of $\A$.
The \textit{formally adjoint} operator $\A^{*}$ is here defined by the formula
$$\A^{*}=-\sum\limits_{\alpha =1}^{n}\A _{\alpha}^{*}\partial _{\alpha}. $$

Let $\Omega \subset \R ^{n}$ be open and let $\mathbb{M}\subset \R^{n}$ be an embedded $(n-1)$-dimensional $\CON ^{1}$-submanifold of $\R ^{n}$. For $\alpha\in(0,1)$ and $p\in[1,\infty)$, we recall the definition of the fractional Sobolev space (semi)-norms on $\Omega$ and $\mathbb{M}$, respectively:
\begin{itemize}
    \item $[u]_{\SOB^{\alpha,p}(\Omega)}=\Big(\int_{\Omega^{2}}\frac{|u(x)-u(y)|^{p}}{|x-y|^{n+\alpha p}}\d x\d y\Big)^{\frac{1}{p}}$, $\norm{u}_{\SOB^{\alpha,p}(\Omega)}=\norm{u}_{\LEB ^{p}(\Omega)}+[u]_{\SOB^{\alpha,p}(\Omega)}$,
    \item $[u]_{\SOB^{\alpha,p}(\mathbb{M})}=\Big(\int_{\mathbb{M}^{2}}\frac{|u(x)-u(y)|^{p}}{|x-y|^{n-1+\alpha p}}\d \sigma_{x}\d\sigma_{y}\Big)^{\frac{1}{p}}$, $\norm{u}_{\SOB^{\alpha,p}(\mathbb{M})}=\norm{u}_{\LEB ^{p}(\mathbb{M})}+[u]_{\SOB^{\alpha,p}(\mathbb{M})}$.
\end{itemize}

\subsection{Space of maps of bounded $\A$-variation}

We are going to collect prerequisites on $\A$-weakly differentiable maps.
In the spirit of \autocite{BDG2017}, we define  Sobolev- and $\BV$-type spaces as follows:
\begin{definition}
Let $\Omega\subset \R ^{n}$ be open and let $p\in [1,\infty]$. We define:

\begin{itemize}
    \item $\SOB^{\A,p}(\Omega)\coloneqq\{u\in \LEB^{p}(\Omega;V)\colon\ \A u\in \LEB^{p}(\Omega;W)\},$ and
    \item $\BVA (\Omega)\coloneqq\{u\in \LEB^{p}(\Omega;V)\colon\ \A u\in \mathcal{M}(\Omega;W)\}.$
\end{itemize}
These spaces can be equipped with the obvious norms making them Banach spaces. Also the spaces  $\SOB^{\A,p}_{0}(\Omega)$ are as usual defined as the closure of $\CON ^{\infty}_{\com}(\Omega;V)$ with respect to the according norm.
\end{definition}
Let $u\in \BVA_{\loc}(\Omega)$. Then we consider the Radon-Nikodým decomposition of $\A u$ with respect to the Lebesgue measure $\A u= \A^a u + \A^s u$, where $\A^a u$ denotes the absolutely continuous part and $\A^s u$ the singular part. Next, we recall different notions of convergence in $\BVA$:

\begin{definition}
Let $u \in \BVA(\Omega)$ and $(u_j) \subset \BVA(\Omega)$. Then $u_j$ converges to $u$ in the
\begin{enumerate}[label=(\roman*)]
    \item \emph{$\A$-weak*-sense} ($u_j \overset{\ast}{\rightharpoonup} u$) if $u_j \to u$ strongly in $L^1(\Omega)$ and $\A u_j \overset{\ast}{\rightharpoonup} \A u$ in the weak*-sense of $W$-valued Radon measures on $\Omega$.
    \item \emph{$\A$-strict sense} ($u_j \overset{s}{\to} u$) if $u_j \to u$ strongly in $L^1(\Omega)$ and $\lvert \A u_j \rvert(\Omega) \to \lvert \A u \rvert(\Omega)$.
    \item \emph{$\A$-area-strict sense} ($u_j \overset{\langle \cdot \rangle}{\to} u$) if $u_j \to u$ strongly in $L^1(\Omega)$ and
    $$
        \int_{\Omega} \sqrt{1 + \left| \frac{\d \A^a u _j}{\d \mathcal{L}^{n}} \right|^2} \d x + \lvert \A^s u_j \rvert(\Omega) \to \int_{\Omega} \sqrt{1 + \left | \frac{\d \A^a u }{\d \mathcal{L}^{n}} \right|^2} \d x + \lvert \A^s u \rvert(\Omega)
    $$
\end{enumerate}
\end{definition}
\begin{lemma}{\autocite[Theorem 2.8, Lemma 4.15]{BDG2017}}
Let $\Omega\subset \R ^{n}$ open. Then $(\CON ^{\infty}\cap\BVA )(\Omega)$ is dense in $\BVA(\Omega)$ with respect to the strict and area-strict topologies. If $\Omega$ is additionally a bounded Lipschitz domain, then $\CON ^{\infty}(\overline{\Omega})$ is dense in $\BVA (\Omega)$ with respect to the strict and area-strict topologies. Let $u_0 \in W^{\A,1}(\Omega)$.  For each $u \in \BVA(\Omega)$  there exists a sequence $(u_j) \subset u_0 + C_c^{\infty}(\Omega)$ such that $\lVert u_j - u \rVert_{L^1(\Omega)} \to 0$ and
\begin{align*}
    \int_{\Omega} \sqrt{1 + \left| \frac{\d \A^a u _j}{\d \mathcal{L}^{n}} \right|^2} &\to \int_{\Omega} 
\sqrt{1 + \left| \frac{\d \A^a u }{\d \mathcal{L}^{n}} \right|^2} \d x + \lvert \A^s u \rvert(\Omega)\\
    &\phantom{\to}+ \int_{\partial \Omega} \lvert (\Tr(u) - \Tr(u_0)) 
\otimes_{\A} \nu_{\partial \Omega} \rvert \d \mathcal{H}^{n-1} \qquad \text{for $j \to \infty$.}
\end{align*}
\end{lemma}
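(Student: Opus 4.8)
We handle the strict and area-strict topologies in parallel: for a convolution-based construction the estimates for the total variation $|\cdot|$ and for the area integrand $z\mapsto\sqrt{1+|z|^{2}}$ are formally identical --- both are convex, of linear growth, and weak$^{*}$-lower semi-continuous when extended to measures via their (common) recession function $|\cdot|$. Throughout we use that $\A$ has constant coefficients, whence $\A(\rho_{\varepsilon}*w)=\rho_{\varepsilon}*(\A w)$ for a standard mollifier, together with the Leibniz identity $\A(\varphi w)=\varphi\,\A w+\nabla\varphi\otimes_{\A}w$ for scalar cut-offs $\varphi$. For the interior density, exhaust $\Omega$ by open sets $\Omega_{k}\Subset\Omega_{k+1}$, fix a locally finite refinement and a subordinate partition of unity $(\varphi_{k})$ with $\sum_{k}\varphi_{k}\equiv1$, and put $u_{\varepsilon}:=\sum_{k}\rho_{\varepsilon_{k}}*(\varphi_{k}u)$ with $\varepsilon_{k}\downarrow0$ chosen so small that $u_{\varepsilon}\to u$ in $\LEB^{1}(\Omega)$ and, since $\sum_{k}\nabla\varphi_{k}\equiv0$, the Anzellotti--Giaquinta error $\sum_{k}\big(\rho_{\varepsilon_{k}}*(\nabla\varphi_{k}\otimes_{\A}u)-\nabla\varphi_{k}\otimes_{\A}u\big)$ is as small as we like in $\LEB^{1}$; then $\A u_{\varepsilon}\,\mathcal{L}^{n}\overset{*}{\rightharpoonup}\A u$. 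Weak$^{*}$-lower semi-continuity (Reshetnyak) gives ``$\liminf\ge$'' for the total variation and for the area functional, while Jensen's inequality for the convex integrands of linear growth together with the $\LEB^{1}$-contraction property of convolution gives ``$\limsup\le$''; hence $u_{\varepsilon}\to u$ strictly and area-strictly, so $(\CON^{\infty}\cap\BVA)(\Omega)$ is dense.

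\emph{Smoothness up to the boundary.} Let $\Omega$ now be a bounded Lipschitz domain. Cover $\partial\Omega$ by finitely many open sets $U_{1},\dots,U_{m}$ in which, after a rigid motion, $\Omega\cap U_{i}=\{x_{n}<\gamma_{i}(x')\}\cap U_{i}$ for a Lipschitz $\gamma_{i}$; add $U_{0}\Subset\Omega$ so that $\{U_{i}\}_{i=0}^{m}$ covers $\overline{\Omega}$, with subordinate partition of unity. On each $U_{i}$, $i\ge1$, translate $u$ outward, $v_{t}^{(i)}:=u(\,\cdot\,-te_{n}^{(i)})$, which for $t>0$ is defined on a neighbourhood of $\overline{\Omega\cap U_{i}}$; continuity of translations in $\LEB^{1}$ and the lower semi-continuity/Jensen argument of the first step give $v_{t}^{(i)}\to u$ in $\LEB^{1}$ and convergence of the total variation and of the area on $\Omega$ as $t\to0$. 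Mollifying these translates at a scale $\ll t$, patching with the partition of unity (the commutators $\nabla\varphi_{i}\otimes_{\A}(\cdot)$ telescoping as above), and extracting a diagonal sequence yields the desired $\CON^{\infty}(\overline{\Omega})$-sequence converging strictly and area-strictly to $u$.

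\emph{Prescribed boundary datum.} Put $v:=u-u_{0}\in\BVA(\Omega)$; since $\A u_{0}\in\LEB^{1}$ we have $\A^{a}v=\A^{a}u-\A u_{0}$, $\A^{s}v=\A^{s}u$, and $\Tr_{\Omega}v=\Tr_{\Omega}u-\Tr_{\Omega}u_{0}$ by linearity of the trace. Extend $v$ by $0$ to $\overbar{v}$ on $\R^{n}$. Because $\A$ is $\C$-elliptic it admits a bounded, strictly continuous trace operator $\Tr_{\Omega}\colon\BVA(\Omega)\to\LEB^{1}(\partial\Omega)$, and the associated Gauss--Green formula gives $\overbar{v}\in\BVA(\R^{n})$, compactly supported in $\overline{\Omega}$, with
\[
\A\overbar{v}=\overbar{\A v}-\big(\nu_{\partial\Omega}\otimes_{\A}\Tr_{\Omega}v\big)\,\mathcal{H}^{n-1}\mres{\partial\Omega},
\]
the two parts being mutually singular. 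Now approximate $\overbar{v}$ by $\CON^{\infty}_{\com}(\Omega)$-maps: in each boundary chart translate $\overbar{v}$ a distance $t\searrow0$ into $\Omega$ (so its support there retreats to $\{x_{n}<\gamma_{i}(x')-ct\}\Subset\Omega$), patch with the partition of unity, and mollify at a scale $\ll t$, obtaining $\varphi_{j}\in\CON^{\infty}_{\com}(\Omega)$ with $\varphi_{j}\to\overbar{v}$ in $\LEB^{1}$, $\A\varphi_{j}\,\mathcal{L}^{n}\overset{*}{\rightharpoonup}\A\overbar{v}$, and convergence of the area functionals, exactly as before. Setting $u_{j}:=u_{0}+\varphi_{j}\in u_{0}+\CON^{\infty}_{\com}(\Omega)$ we get $u_{j}\to u$ in $\LEB^{1}(\Omega)$ and $\A^{s}u_{j}=0$; decomposing $\A u_{j}=\A u_{0}+\A\varphi_{j}$ and using $\A u_{0}+\A^{a}v=\A^{a}u$, $\A^{s}v=\A^{s}u$, $\Tr_{\Omega}v=\Tr_{\Omega}u-\Tr_{\Omega}u_{0}$ together with the mutual singularity above,
\begin{align*}
\int_{\Omega}\sqrt{1+|\A^{a}u_{j}|^{2}}\d x\ &\longrightarrow\ \int_{\Omega}\sqrt{1+|\A^{a}u|^{2}}\d x+|\A^{s}u|(\Omega)\\
&\phantom{\ \longrightarrow\ }+\int_{\partial\Omega}\big|(\Tr_{\Omega}u-\Tr_{\Omega}u_{0})\otimes_{\A}\nu_{\partial\Omega}\big|\d\mathcal{H}^{n-1},
\end{align*}
which is the assertion.

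The main obstacle is the boundary analysis of the last step: establishing the Gauss--Green identity for $\overbar{v}$ with jump term $(\nu_{\partial\Omega}\otimes_{\A}\Tr_{\Omega}v)\,\mathcal{H}^{n-1}\mres{\partial\Omega}$ --- equivalently, identifying the limit of the jump contributions produced by translating the zero-extension inward. This rests on the $\A$-analogue of the $\BV$ jump formula (how $\A$ acts distributionally on a map with a jump across a hypersurface) and, crucially, on the strict continuity of the $\BVA$-trace operator, a property exclusive to the $\C$-elliptic setting. One must also verify that the local inward translations patch without spurious jumps along the overlaps $U_{i}\cap U_{j}$ and that the mollification scales can be chosen uniformly small; both are handled by the telescoping of the partition-of-unity commutators and a diagonal extraction.
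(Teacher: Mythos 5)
The paper does not supply a proof here: the first two assertions are cited verbatim from \autocite[Theorem 2.8, Lemma 4.15]{BDG2017}, and the final one is merely tagged ``analogous to the $\BD$-case \autocite{Gme2020}.'' So the comparison is against the standard $\BV$/$\BD$ blueprint rather than against anything in this paper. Your sketch follows that blueprint in structure --- Anzellotti--Giaquinta mollification through a partition of unity for interior density, outward translation plus mollification in Lipschitz charts for $\CON^{\infty}(\overline{\Omega})$-density, and zero--extension with the Gauss--Green/jump formula to surface the penalty term --- and the first two steps are sound (modulo the usual commutator bookkeeping, which you handle correctly).

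The gap is in the final step. Having built $\varphi_{j}\in\CON^{\infty}_{\com}(\Omega)$ with $\varphi_{j}\to\overbar{v}$ area-strictly in $\BVA(\Omega')$ for a bounded neighbourhood $\Omega'\supset\overline{\Omega}$, you then ``decompose $\A u_{j}=\A u_{0}+\A\varphi_{j}$ and conclude'' the displayed limit. But the quantity actually required is
\[
    \int_{\Omega}\sqrt{1+\lvert\A u_{0}+\A\varphi_{j}\rvert^{2}}\d x,
\]
and area-strict convergence of $\varphi_{j}$ to $\overbar{v}$ controls, via Reshetnyak continuity, the integrals $\int_{\Omega}g(\A\varphi_{j})$ only for \emph{autonomous} continuous integrands $g$ of linear growth. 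The shift by $\A u_{0}\in\LEB^{1}(\Omega;W)$ turns the area integrand into the non-autonomous $g(x,w)=\sqrt{1+\lvert\A u_{0}(x)+w\rvert^{2}}$, to which the cited continuity result (Proposition~\ref{prop: continuity of functional}) does not apply directly because $\A u_{0}$ is merely $\LEB^{1}$. One has to either (a) prove a shifted Reshetnyak-type continuity statement --- typically by mollifying $\A u_{0}$, using the case of continuous $x$-dependence, and controlling the error via the elementary estimate $\lvert\sqrt{1+|a+b|^{2}}-\sqrt{1+|a'+b|^{2}}\rvert\le|a-a'|$ --- or (b) abandon the zero-extension route for the more common cut-off/gluing argument: take $u_{k}\in\CON^{\infty}(\overline{\Omega})$ with $u_{k}\to u$ area-strictly (your second step), set $u_{k,\delta}:=u_{0}+\eta_{\delta}(u_{k}-u_{0})$ with $\eta_{\delta}$ a boundary cut-off, and watch the penalty emerge as the limit of the commutator $\int_{\Omega}\lvert\nabla\eta_{\delta}\otimes_{\A}(u_{k}-u_{0})\rvert\d x$, using strict continuity of the $\BVA$-trace. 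Either fix closes the argument; as written the last inference does not follow.
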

The proof of the last assertion is analogous to the $\BD$-case \autocite{Gme2020}.
\begin{lemma}{\autocite[Theorem 3.2]{BDG2017}}
\label{lem: Poincaré}
Let $B\subset \R ^{n}$ be an open ball of radius $r>0$ and let $\Pi_{B}$ denote the $\LEB ^{2}(B)$-projection onto $\mathcal{N}(\A)$. Then there exists a constant $C>0$ such that for all $u\in \BVA (B)$ we have
$$\norm{u-\Pi_{B}u}_{\LEB^{1}(B)}\leq Cr|\A u|(B).$$
\end{lemma}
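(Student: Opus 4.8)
\textbf{Proof strategy for the main theorem (partial $\CON^{1,\alpha}$ regularity).}
The plan is to establish an $\varepsilon$-regularity statement via an \emph{excess decay} argument in the spirit of \autocite{GK2019, Gme2020}, adapted to the $\C$-elliptic operator $\A$. Define the excess of $u$ on a ball $B=B_{r}(x_{0})$ by $\excess(u;B)=\fint_{B}V_{1}\big(\A u-(\A u)_{B}\big)$, together with the companion quantity recording the size of the averaged gradient, $|(\A u)_{B}|$. The heart of the argument is the following iteration lemma: there is $\varepsilon_{0}>0$ and $\theta\in(0,1)$, both depending only on $M,\alpha,F,n,d_{V},d_{W},\tfrac{L}{\nu}$, such that if $|(\A u)_{B}|\leq M$ and $\excess(u;B)<\varepsilon_{0}$, then $|(\A u)_{\theta B}|\leq 2M$ and $\excess(u;\theta B)\leq \theta^{2\alpha}\,\excess(u;B)$. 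Iterating this on the dyadic balls $\theta^{k}B$ yields $\excess(u;\theta^{k}B)\leq \theta^{2\alpha k}\excess(u;B)$, which via a standard Campanato-type characterisation of Hölder continuity (applied to the absolutely continuous part $\A^{a}u$, after first showing $\A^{s}u\restr{B}=0$) gives $\A u\in\CON^{0,\alpha}(B;W)$; $\C$-ellipticity of $\A$ then upgrades this to $u\in\CON^{1,\alpha}(B;V)$ through the Korn-type/elliptic-regularity estimates available for $\C$-elliptic operators.

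\textbf{Ingredients of the iteration lemma.} First, a Caccioppoli inequality of the second kind: using minimality of $u$ tested against $\zeta=u-\varphi(u-\ell)$ for a cutoff $\varphi$ and an affine comparison map $\ell$ with $\A\ell\equiv(\A u)_{B}$, together with strong $\A$-quasi-convexity \ref{itm:H2} and Widman's hole-filling trick, one bounds $\fint_{B/2}V_{1}\big(\A u-(\A u)_{B}\big)$ by $C\fint_{B}V_{1}\big(\tfrac{u-\ell}{r}\big)$ plus lower-order penalty-term contributions; here \ref{itm:H1} (the $\CON^{2,1}_{\loc}$ regularity and linear growth of $f$) controls the integrand near $(\A u)_{B}$, uniformly for $|(\A u)_{B}|\le M$. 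Second, an $\mathcal{A}$-harmonic (equivalently $\mathcal{B}$-harmonic, $\mathcal{B}=D^{2}F((\A u)_{B})$ being strongly Legendre--Hadamard elliptic) approximation: on $\mathcal{L}^{1}$-a.e.\ sphere $\partial B_{\rho}(x_{0})$ the Fubini-type property of $\BVA$-maps (the $\C$-elliptic analogue of \autocite[Section 4]{GK2019},\autocite{Gme2020}, which uses the trace theorem \autocite[Theorem 1.1]{GR2019}) gives $\Tr_{\partial B_{\rho}}(u)\in\SOB^{\beta,1}$ with quantitative control by $|\A u|(B)$; solving the linear system $\Div(\mathcal{B}\nabla h)=0$ in $B_{\rho}$ with $h=u$ on $\partial B_{\rho}$ then produces an $\A$-harmonic map $h$ for which one can \emph{directly} estimate $\fint_{B_{\rho}}V_{1}\big(\tfrac{u-h}{\rho}\big)$ by a modulus of continuity of $\excess(u;B)$, since the Fubini regularity bounds exactly how close $h$ is to $u$. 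Finally, the decay of $h$: being a solution of a constant-coefficient elliptic system, $h$ satisfies $\fint_{\theta B_{\rho}}|\nabla h-(\nabla h)_{\theta B_{\rho}}|^{2}\leq C\theta^{2}\fint_{B_{\rho}}|\nabla h-(\nabla h)_{B_{\rho}}|^{2}$; combining the three estimates, using Lemma \ref{lem: Poincaré} to pass from $\fint V_{1}(\tfrac{u-\ell}{r})$ back to $\excess$, and choosing $\theta$ small then $\varepsilon_{0}$ small, closes the iteration with the quadratic gain $\theta^{2\alpha}$.

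\textbf{From the iteration to the singular set.} Once $u\restr{B}\in\CON^{1,\alpha}(B;V)$ whenever the smallness hypotheses hold at $B$, the complement of the singular set $\Sigma_{u}$ is exactly the set of points $x_{0}$ admitting arbitrarily small radii $r$ with $|(\A u)_{B_{r}(x_{0})}|\le M$ (some $M$) and $\excess(u;B_{r}(x_{0}))<\varepsilon_{0}$; by the iteration, Hölder continuity of $\A u$ propagates to a neighbourhood, so $\Omega\setminus\Sigma_{u}$ is open, i.e.\ $\Sigma_{u}$ is relatively closed, and $u\in\CON^{1,\alpha}_{\loc}(\Omega\setminus\Sigma_{u};V)$ for every $\alpha\in(0,1)$ (the constant $\varepsilon$ is allowed to depend on $\alpha$ and on the chosen $M$). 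That $\mathcal{L}^{n}(\Sigma_{u})=0$ follows from Lebesgue differentiation applied to the measure $\A u$: at $\mathcal{L}^{n}$-a.e.\ $x_{0}$ the averages $(\A u)_{B_{r}(x_{0})}$ converge to $\tfrac{\d\A^{a}u}{\d\mathcal{L}^{n}}(x_{0})$ (hence stay bounded) and the rescaled oscillations tend to $0$, so both defining sets have measure zero.

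\textbf{Main obstacle.} The delicate point is the $\A$-harmonic approximation in the \emph{linear}-growth setting: the solution operator $(\CON^{0}\cap\LEB^{1})(\partial B_{\rho})\to\SOB^{1,1}(B_{\rho})$ is not bounded from $\LEB^{1}$, so no compactness/contradiction argument is available and one must instead prove a \emph{quantitative} Fubini-type property for $\BVA$-maps with $\A$ only $\C$-elliptic—extracting, for a.e.\ radius, genuine fractional-Sobolev regularity of the spherical trace with norm controlled by $|\A u|(B)$—and then feed this into an explicit stability estimate for the linear system. Reproducing the \autocite{Gme2020} mechanism at the level of general $\C$-elliptic $\A$ (where one no longer has the algebraic structure of the symmetric gradient, only the abstract trace theorem and the exactness of the symbol complex) is exactly where the new work lies; the Caccioppoli inequality and the decay of $h$ are, by comparison, routine adaptations.
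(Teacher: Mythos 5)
Your proposal does not address the statement in question. The statement is the Poincaré-type inequality for $\BVA$-maps (Lemma~\ref{lem: Poincaré}): for an open ball $B$ of radius $r$, with $\Pi_{B}$ the $\LEB^{2}$-projection onto the finite-dimensional nullspace $\mathcal{N}(\A)$, one has $\norm{u-\Pi_{B}u}_{\LEB^{1}(B)}\leq Cr\,|\A u|(B)$ for all $u\in\BVA(B)$. What you have written instead is an outline of the proof strategy for the paper's main partial-regularity theorem: excess decay via a Caccioppoli inequality, a Fubini-type trace property, and $\mathcal{B}$-harmonic approximation. That is a different — and much larger — statement; the Poincaré lemma is only one of several ingredients you invoke along the way (indeed your sketch explicitly calls on Lemma~\ref{lem: Poincaré} as a black box in the step ``using Lemma~\ref{lem: Poincaré} to pass from $\fint V_{1}(\tfrac{u-\ell}{r})$ back to $\excess$''). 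Nothing in your writeup constitutes an argument for the Poincaré inequality itself.

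For comparison: in the paper this lemma is cited without proof from \autocite[Theorem 3.2]{BDG2017}, so there is no internal proof to match against. A blind proof of the actual statement would need, for instance, to combine $\R$-ellipticity (so that $\A$ has a constant-coefficient fundamental solution / Korn-type representation), the fact that $\C$-ellipticity forces $\dim\mathcal{N}(\A)<\infty$ with $\mathcal{N}(\A)$ consisting of polynomials of bounded degree, a normalised compactness (or direct convolution) argument on the unit ball $\mathbb{B}$, and a scaling step to transport the estimate from $\mathbb{B}$ to $B_{r}$, which is where the factor $r$ on the right-hand side originates. None of these steps appears in your proposal. You should redo the exercise targeting the Poincaré inequality specifically.
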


\begin{lemma}{\autocite[Theorem 1.2]{BDG2017}}
Let $\Omega\subset \R ^{n}$ be an open and bounded Lipschitz domain. Then there exists a linear and strictly continuous operator $\Tr_{\Omega}\colon \BVA (\Omega)\to \LEB^{1}(\partial \Omega)$ such that for all $u\in \CON ^{1}(\overline{\Omega})$ we have $\Tr_{ \Omega}u=u\restr{\partial \Omega}.$
For an open and bounded Lipschitz subset $\Omega^{\prime} \Subset \Omega$, we consider so called \emph{interior} and \emph{exterior} traces of $u$ denoted by
$$\Tr^{-}_{ \Omega^{\prime}}(u)\coloneq \Tr_{ \Omega^{\prime}}\left(u\restr{\Omega^{\prime}}\right)\ \textnormal{and}\  \Tr^{+}_{ \Omega^{\prime}}(u) \coloneq \Tr_{\Omega \setminus \Omega^{\prime}}\left(u\restr{\Omega \setminus \Omega^{\prime}}\right).$$
One can explicitly compute 
\begin{equation}
\label{eq:outer-trace-approx-limit}
	\lim_{r \searrow 0} \slashint_{B^{\pm}(x,r)} \lvert u(y) - \Tr^{\pm}_{\partial B}(u)(x) \rvert \d y = 0
\end{equation}
for $\mathcal{H}^{n-1}$-a.e. $x \in \partial B$, where $B^{\pm}(x,r) \coloneq \lcbrace y \in B_r(x) \mid \langle y - x, \nu(x) \rangle \gtrless 0 \rcbrace$. Here, $\nu(x)$ designates the outer unit normal vector to the sphere $\partial B$ at point $x$. 

\end{lemma}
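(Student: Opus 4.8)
\emph{Proof sketch.} The plan is to first establish the \emph{a priori} trace estimate
$$\norm{u}_{\LEB^{1}(\partial\Omega)}\le C\big(\norm{u}_{\LEB^{1}(\Omega)}+|\A u|(\Omega)\big)\qquad\text{for all }u\in(\CON^{\infty}\cap\BVA)(\Omega),$$
and then to extend $\Tr_{\Omega}$ to all of $\BVA(\Omega)$ by strict density, using the density lemma recorded above. As $\Omega$ is a bounded Lipschitz domain, after a partition of unity subordinate to a cover of $\partial\Omega$ it suffices to prove such a bound on a boundary half-ball, with $\partial\Omega$ flattened by a bi-Lipschitz chart; the chart turns the constant-coefficient operator $\A$ into a first-order operator with Lipschitz coefficients whose symbol remains injective on $\C^{n}\setminus\{0\}$, so the structural consequences of $\C$-ellipticity persist up to controlled perturbations. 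On such a half-ball $\mathbb{B}^{-}=\{z\in\mathbb{B}\colon z_{n}<0\}$ one splits $u=\pi u+(u-\pi u)$, where $\pi$ is the $\LEB^{2}$-projection onto the \emph{finite-dimensional} polynomial space $\mathcal{N}(\A)$ — finite-dimensionality being the algebraic hallmark of $\C$-ellipticity, and precisely what makes Lemma~\ref{lem: Poincaré} meaningful. The polynomial part obeys $\norm{\pi u}_{\LEB^{1}(\{z_{n}=0\})}\le C\norm{\pi u}_{\LEB^{1}(\mathbb{B}^{-})}\le C\norm{u}_{\LEB^{1}(\mathbb{B}^{-})}$ by equivalence of norms on a finite-dimensional space, while the remainder $w=u-\pi u$ is controlled by combining the half-ball analogue of Lemma~\ref{lem: Poincaré}, namely $\norm{w}_{\LEB^{1}(\mathbb{B}^{-})}\le C\,|\A w|(\mathbb{B}^{-})$, with a half-space trace inequality for $\SOB^{\A,1}$ which is itself a consequence of $\C$-ellipticity (for instance via the Sobolev embedding $\SOB^{\A,1}\hookrightarrow\LEB^{n/(n-1)}$ followed by a one-dimensional integration in $z_{n}$ after mollification). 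Summing over the finite cover yields the displayed estimate.

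For the extension together with the asserted \emph{strict continuity}, one localises this estimate to a collar $\Omega_{\delta}=\{x\in\Omega\colon\Dist(x,\partial\Omega)<\delta\}$, obtaining $\norm{\Tr_{\Omega}(u-v)}_{\LEB^{1}(\partial\Omega)}\le C\delta^{-1}\norm{u-v}_{\LEB^{1}(\Omega_{\delta})}+C\,|\A(u-v)|(\overline{\Omega_{\delta}})$ for smooth $u,v$. Applied to smooth maps $u_{j}\to u$ converging strictly (as provided by the density lemma) and combined with $|\A u_{j}|(\Omega)\to|\A u|(\Omega)$, this shows that $(\Tr_{\Omega}u_{j})$ is Cauchy in $\LEB^{1}(\partial\Omega)$, that its limit is independent of the chosen approximation, and that the resulting operator $\Tr_{\Omega}$ is linear and strictly continuous; the identity $\Tr_{\Omega}u=u\restr{\partial\Omega}$ for $u\in\CON^{1}(\overline{\Omega})$ is immediate from the construction.

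It remains to verify the pointwise identity \eqref{eq:outer-trace-approx-limit}; here no flattening is needed since $\partial B$ is smooth. Put $\mu=|\A u\mres B|$, a finite Radon measure that does not charge the closed set $\partial B$. A Vitali-type covering argument gives, for every $\lambda>0$ and every open $U\supset\partial B$, the bound $\mathcal{H}^{n-1}\big(\{x\in\partial B\colon\limsup_{r\searrow0}r^{1-n}\mu(B_{r}(x))>\lambda\}\big)\le C\lambda^{-1}\mu(U)$; letting $\mu(U)\downarrow\mu(\partial B)=0$ shows that $\mu(B_{r}(x))=o(r^{n-1})$ as $r\searrow0$ for $\mathcal{H}^{n-1}$-a.e. $x\in\partial B$. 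Intersecting this full-measure set with the set of Lebesgue points of $\Tr^{\pm}_{\partial B}(u)\in\LEB^{1}(\partial B)$ relative to $\mathcal{H}^{n-1}\mres\partial B$, fixing such an $x$ and rescaling $u_{x,r}(z)=u(x+rz)$, one notes that $B\cap B_{r}(x)$ differs from the inner half-ball $B^{-}(x,r)$ by a set of measure $o(r^{n})$ and applies the half-ball Poincaré inequality carrying the correct boundary-trace remainder — which follows from the half-ball decomposition above together with the continuity of the trace on the finite-dimensional space $\mathcal{N}(\A)$ — to get
$$\slashint_{B^{-}(x,r)}\big|u(y)-\Tr^{-}_{\partial B}(u)(x)\big|\d y\le C\,r^{1-n}\mu(B_{r}(x))+C\slashint_{\partial B\cap B_{r}(x)}\big|\Tr^{-}_{\partial B}(u)(y')-\Tr^{-}_{\partial B}(u)(x)\big|\d\sigma_{y'}.$$
Both terms on the right tend to $0$ as $r\searrow0$ — the first by the density estimate, the second because $x$ is a Lebesgue point — which is exactly \eqref{eq:outer-trace-approx-limit} for the interior trace; the exterior trace is treated symmetrically on $B^{+}(x,r)\subset\Omega\setminus B$. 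For smooth $u$ the identity is merely continuity, and the general case may alternatively be reached by strict approximation together with the strict continuity of the trace.

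The main obstacle is that, by Ornstein's Non-Inequality, $\A u$ does not control $\nabla u$ in $\LEB^{1}$, so the elementary fundamental-theorem-of-calculus proof of the classical $\BV$ trace theorem is unavailable; one must instead exploit the \emph{full} strength of $\C$-ellipticity — the finite-dimensional polynomial null space and the attendant Poincaré and Sobolev inequalities — and, for \eqref{eq:outer-trace-approx-limit}, isolate the half-ball Poincaré inequality with the correct boundary-trace remainder, while keeping the bi-Lipschitz flattening perturbation of the coefficients under control in the global part of the argument. All of this, as well as its failure without $\C$-ellipticity, is carried out in \autocite{BDG2017}.
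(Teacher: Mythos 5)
The paper gives no proof of this Lemma; it is quoted verbatim as Theorem~1.2 of BDG2017, so there is no internal argument to compare against and you are reconstructing an external proof. Several of the outer layers of your sketch (the partition of unity and bi-Lipschitz flattening, the decomposition against the finite-dimensional polynomial space $\mathcal{N}(\A)$, the strict-density extension, and the Vitali covering estimate $\mu(B_r(x))=o(r^{n-1})$ for the blow-up identity \eqref{eq:outer-trace-approx-limit}) are reasonable in spirit. But there is a genuine gap at precisely the point where all the weight sits: you controlled the remainder $w=u-\pi u$ in $\LEB^1(\mathbb{B}^{-})$ by $|\A w|(\mathbb{B}^{-})$ and then invoked a ``half-space trace inequality for $\SOB^{\A,1}$'' said to follow from the Sobolev embedding $\SOB^{\A,1}\hookrightarrow\LEB^{n/(n-1)}$ ``followed by a one-dimensional integration in $z_n$ after mollification.'' That mechanism does not exist: a Sobolev embedding into $\LEB^{n/(n-1)}(\mathbb{B}^-)$ says nothing about a measure-zero hyperplane, and the one-dimensional integration $u(z',0)=u(z',-t)+\int_{-t}^{0}\partial_n u\,ds$ needs $\LEB^1$-control of the normal derivative $\partial_n u$, which is exactly what Ornstein's Non-Inequality — the very obstruction you name in your last paragraph — forbids you from extracting from $\A u$. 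Mollifying does not help because you would then need the resulting smooth-function trace bound to be uniform in the mollification scale, which is again the whole theorem.

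What actually fills this hole in BDG2017 is a purely algebraic consequence of $\C$-ellipticity, not a Sobolev embedding. Via a Smith-normal-form (or Hilbert Nullstellensatz) argument, one shows that for a first-order $\C$-elliptic operator $\A$ there is an integer $m$ and a constant-coefficient linear differential operator $\mathbb{L}$ of order $m-1$ with
\[
D^m u = \mathbb{L}[\A u]\qquad\text{for all }u\in\CON^\infty,
\]
the case $\A=\varepsilon$ being the Saint-Venant compatibility identity $\partial_i\partial_j u_k=\partial_i\varepsilon_{jk}+\partial_j\varepsilon_{ik}-\partial_k\varepsilon_{ij}$. In a flattened boundary chart this turns the behaviour of $u$ in the normal direction $z_n$ into an $m$-th order ODE whose source is built from $\A u$; repeated integration in $z_n$ produces, uniformly as $z_n\to 0^-$, an $\LEB^1$ limit that defines the trace without ever estimating $\nabla u$ in $\LEB^1$. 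It also explains why $\mathcal{N}(\A)\subset \mathcal{P}_{m-1}$ is finite-dimensional and how the polynomial part you split off in your first step arises. This higher-order representation, not the Sobolev embedding, is the crucial ingredient missing from your reconstruction; if you replace your ``Sobolev embedding plus $1$D integration'' step by it, the rest of your outline (including the half-ball Poincaré inequality with trace remainder used for \eqref{eq:outer-trace-approx-limit}) can be made to go through along the lines of BDG2017.
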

\begin{proposition}{\autocite[Proposition 5.1]{BDG2017}}
\label{prop: continuity of functional}
Let $\Omega\subset \R^{n}$ be open, bounded and let $g\colon W \to \R$ be an $\A$-quasi-convex integrand of linear growth. Then the functional
$$\overline{\mathcal{G}}\colon \BVA (\Omega)\to \R, u\mapsto \int_{\Omega}g\left(\A u \right)\coloneq\int g\left(\frac{\A ^{a} u}{\mathcal{L}^{n}}\right)\d x+\int_{\Omega}g^{\infty}\left(\frac{\d \A ^{s} u}{\d |\A ^{s}u|}\right)\d |\A ^{s}u|$$ is $\A$-area strictly continuous and sequentially lower semi-continuous with respect to weak$^{*}$-convergence.
\end{proposition}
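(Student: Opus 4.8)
The plan is to prove the two assertions separately, the area-strict continuity resting in part on the weak$^{*}$-lower semicontinuity. For the latter I would run the blow-up method in the spirit of \autocite{FM1999}, adapted to the linear-growth $\BVA$-setting. Given $u_{j}\rightharpoonup^{*}u$ in $\BVA(\Omega)$ with $\liminf_{j}\overline{\mathcal{G}}[u_{j}]<\infty$, I would pass to a subsequence along which the signed measures
\[
\mu_{j}\coloneq g\!\left(\frac{\A^{a}u_{j}}{\mathcal{L}^{n}}\right)\mathcal{L}^{n}+g^{\infty}\!\left(\frac{\d\A^{s}u_{j}}{\d|\A^{s}u_{j}|}\right)|\A^{s}u_{j}|\in\mathcal{M}(\Omega),
\]
which are uniformly bounded in total variation thanks to the linear growth of $g$, converge weak$^{*}$ to some $\mu\in\mathcal{M}(\Omega)$. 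A standard---but carefully executed---cut-off and exhaustion argument over $\omega\Subset\Omega$, exploiting that $g$ is bounded below by an affine function, then reduces matters to the two Radon--Nikod\'ym lower bounds $\tfrac{\d\mu}{\d\mathcal{L}^{n}}(x_{0})\geq g\big(\tfrac{\A^{a}u}{\mathcal{L}^{n}}(x_{0})\big)$ for $\mathcal{L}^{n}$-a.e.\ $x_{0}$ and $\tfrac{\d\mu}{\d|\A^{s}u|}(x_{0})\geq g^{\infty}\big(\tfrac{\d\A^{s}u}{\d|\A^{s}u|}(x_{0})\big)$ for $|\A^{s}u|$-a.e.\ $x_{0}$; integrating these over $\Omega$ gives $\overline{\mathcal{G}}[u]\leq\mu(\Omega)\leq\liminf_{j}\overline{\mathcal{G}}[u_{j}]$. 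The regular bound follows from a rescaling at Lebesgue points of $\tfrac{\A^{a}u}{\mathcal{L}^{n}}$, the blow-up producing admissible $\A$-free competitors to which the defining inequality of $\A$-quasiconvexity applies; the singular bound follows from a blow-up at points of positive upper $|\A^{s}u|$-density, using the $\A$-quasiconvexity of $g^{\infty}$ inherited from that of $g$ together with the fact that the blow-up limit is an $\A$-free field valued in the effective range $\mathcal{R}(\A)$, for which I would invoke the $\R$-elliptic analogue of Alberti's rank-one theorem \autocite{PR2016}. (Once the area-strict continuity below is available, the weak$^{*}$-lower semicontinuity may alternatively be deduced by identifying $\overline{\mathcal{G}}$ with a Lebesgue--Serrin relaxation.)

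For the area-strict continuity, let $u_{j}\overset{\langle\cdot\rangle}{\to}u$. Since area-strict convergence implies weak$^{*}$-convergence, the previous step already yields $\liminf_{j}\overline{\mathcal{G}}[u_{j}]\geq\overline{\mathcal{G}}[u]$, so only the upper bound $\limsup_{j}\overline{\mathcal{G}}[u_{j}]\leq\overline{\mathcal{G}}[u]$ remains. I would obtain it from Reshetnyak's continuity theorem: viewing the pairs $(\mathcal{L}^{n},\A u_{j})$ as $(\R\times W)$-valued Radon measures on $\Omega$, area-strict convergence of the $u_{j}$ is precisely strict convergence $(\mathcal{L}^{n},\A u_{j})\to(\mathcal{L}^{n},\A u)$, and $\overline{\mathcal{G}}[v]=\int_{\Omega}h\big(\tfrac{\d(\mathcal{L}^{n},\A v)}{\d|(\mathcal{L}^{n},\A v)|}\big)\d|(\mathcal{L}^{n},\A v)|$ for the positively $1$-homogeneous function $h$ on $\R\times W$ determined by $h(t,w)=tg(w/t)$ for $t>0$ and $h(0,w)=g^{\infty}(w)$. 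Reshetnyak's theorem then delivers the convergence provided $h$ is continuous on the part of the unit sphere of $\R\times W$ that carries the polars of the measures involved. Continuity where $t>0$ is immediate from $g\in\CON$; at $t=0$ one needs $g^{\infty}$ to be a genuine limit and continuous, and here one uses that the singular polars $\tfrac{\d\A^{s}u_{j}}{\d|\A^{s}u_{j}|},\tfrac{\d\A^{s}u}{\d|\A^{s}u|}$ all take values in $\mathcal{R}(\A)$ (again \autocite{PR2016}), together with the convexity of $g$ along the directions $\xi\otimes_{\A}v$ forced by $\A$-quasiconvexity, which makes $t\mapsto tg(w/t)$ monotone and convergent on $\mathcal{R}(\A)$ with continuous limit there. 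To make this rigorous I would first exploit the area-strict density of smooth maps stated above to reduce, via a diagonal argument, to sequences with $\A u_{j}\in\LEB^{1}(\Omega)$---where only the part $t>0$ of the sphere appears---and then let the approximation parameter tend to zero.

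The delicate step, occurring on both sides, is the singular-part analysis: in the lower semicontinuity argument one must identify the blow-up limits at singular points as $\A$-free fields concentrated on $\mathcal{R}(\A)$ before $\A$-quasiconvexity of $g^{\infty}$ can be brought to bear, while in the continuity argument one must upgrade $g^{\infty}$ from a mere $\limsup$ to a continuous limit on the effective range so that Reshetnyak's theorem is applicable. Both hinge on the $\R$-elliptic replacement of Alberti's rank-one theorem \autocite{PR2016}, and it is precisely there that the ellipticity of $\A$ enters in an essential way; by comparison, the boundary-mass bookkeeping in the lower semicontinuity step, and the verification that $\A$-quasiconvexity passes to $g^{\infty}$, are routine.
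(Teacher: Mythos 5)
The paper does not give its own proof of this proposition: it is imported verbatim as a cited result, \autocite[Proposition 5.1]{BDG2017}, so there is no internal argument to compare your sketch against. Judged on its own terms, your sketch is essentially correct and follows the standard route one finds in the literature for these linear-growth $\BVA$-functionals. For the area-strict continuity you correctly identify that area-strict convergence of $u_j$ is nothing but strict convergence of the $(\R\times W)$-valued measures $(\mathcal{L}^n,\A u_j)$, so Reshetnyak's continuity theorem applies once the perspectified integrand $h(t,w)=tg(w/t)$, $h(0,w)=g^\infty(w)$, is shown to be continuous on the relevant part of the unit sphere; and you correctly single out the existence and continuity of $g^\infty$ as the nontrivial content. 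For the lower semicontinuity the blow-up scheme à la \autocite{FM1999}/Ambrosio--Dal Maso, split into the two Radon--Nikod\'ym bounds, is the right skeleton, and the singular blow-up indeed hinges on \autocite{PR2016}.

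Two imprecisions worth flagging, neither fatal. First, \autocite{PR2016} places the singular polars $\tfrac{\d\A^s u}{\d|\A^s u|}$ in the wave cone $\Lambda_{\A}=\{\xi\otimes_{\A}v\colon\xi\in\R^n,\,v\in V\}$, not merely in its linear span $\mathcal{R}(\A)$; and it is precisely on $\Lambda_{\A}$ that $\A$-quasi-convexity gives direction-wise convexity, hence monotone convergence of $t^{-1}g(tw)$ and a genuine, continuous limit $g^\infty$. You write ``effective range'' where you should write ``wave cone'' — the convexity argument you invoke is along cone directions only. Second, the phrase ``$\A$-quasiconvexity of $g^\infty$ inherited from $g$'' should be softened: what actually passes to the recession function is $\Lambda_{\A}$-convexity, and that is what the singular blow-up step uses. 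With those caveats made precise, the sketch is a faithful outline of why the proposition holds; it is also consistent with the remark in your parenthetical that one may instead identify $\overline{\mathcal{G}}$ with a Lebesgue--Serrin relaxation, which is the other well-trodden path.
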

We will use the shorthand notation $\fint_{\Omega}g(\A u)=\frac{\int_{\Omega}g(\A u)}{\mathcal{L}^{n}(\Omega)}$.

\begin{lemma}
\label{lem:bva-slobodeckij-embedding}
Let $n \geq 2$, $\alpha \in (0,1)$, let $B_{2r} \subset \R^n$ be a ball of radius $2r > 0$ and let $p \coloneq \frac{n}{n-1+\alpha}$. Then there exists a constant $C>0$ independent of the radius $r$  such that for every ball $B_r \subset \R^n$ and every $u \in \BVA(\R^n)$, there exists some $b \in \mathcal{N}(\A)$ with
\begin{equation}
\label{eq:slobodeckij-seminorm-estimate}
	\PARENS{ \slashint_{B_r} \int_{B_r} \frac{\lvert u_b(x) - u_b(y) \rvert^p}{\lvert x - y \rvert^{n + \alpha p}} \d x \d y }^{\frac{1}{p}} \leq C r^{1-\alpha} \slashint_{B_{2r}} \lvert \A u \rvert,
\end{equation}
where $u_b \coloneq u - b$.
\end{lemma}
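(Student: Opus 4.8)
The plan is to reduce to the unit scale, pass to smooth maps, and then establish the estimate by a dyadic Campanato‑type telescoping argument built on the Poincaré inequality of Lemma~\ref{lem: Poincaré}.

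\textbf{Reductions.} After translating $x_{0}$ to the origin and rescaling $x\mapsto x/r$ — both sides of \eqref{eq:slobodeckij-seminorm-estimate} transform identically, since $p=\tfrac n{n-1+\alpha}$ is exactly the exponent for which $\slashint_{B_r}[\,\cdot\,]_{\SOB^{\alpha,p}(B_r)}^{p}$ and $\bigl(r^{1-\alpha}\slashint_{B_{2r}}|\A\,\cdot\,|\bigr)^{p}$ scale the same way — it suffices to treat $r=1$. I
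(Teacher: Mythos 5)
Your proposal stops after the rescaling step, so as written it contains no proof: the reduction to $r=1$ (which is indeed consistent, since both sides scale like $r^{-\alpha}$ under $x\mapsto rx$, and $\mathcal{N}(\A)$ is scaling-invariant) is the only part that is actually carried out, and it is also the least substantial part — the paper performs the same scaling at the very end, together with the choice $b=\Pi_{B_r}u$ from Lemma \ref{lem: Poincaré}.

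The more serious issue is that the announced strategy — a dyadic Campanato/telescoping argument built solely on the Poincaré inequality of Lemma \ref{lem: Poincaré} — cannot deliver the estimate. That inequality only controls $\lVert u-\Pi_B u\rVert_{\LEB^1}$ by $r\lvert\A u\rvert(B)$; telescoping over dyadic balls then gives at best pointwise bounds of Riesz-potential type, $\lvert u(x)-u(y)\rvert\lesssim I_1\bigl(\lvert\A u\rvert\bigr)(x)+I_1\bigl(\lvert\A u\rvert\bigr)(y)$ locally, and $I_1$ maps $\LEB^1$ only into weak-$\LEB^{n/(n-1)}$. The claimed bound, however, is a \emph{critical}, scaling-invariant estimate with the strong Gagliardo seminorm at the exponent $p=\tfrac{n}{n-1+\alpha}>1$; it sits exactly at the borderline where such soft potential arguments fail (already for $\A=\nabla$ the embedding $\SOB^{1,1}\hookrightarrow\SOB^{\alpha,p}$ is not obtainable this way, and by Ornstein's non-inequality one cannot reduce the general case to the full-gradient one). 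Indeed, the analogous estimate is false for elliptic operators that are not canceling (e.g.\ $\overline{\partial}$ in the plane, whose fundamental solution just fails the critical fractional space), so any proof must use the structural input that $\C$-elliptic operators are elliptic \emph{and canceling}. This is precisely what the paper does: cut off $u$ to a compactly supported $\wtilde{u}\in\BVA(\R^n)$, approximate by $\CON^\infty_{\com}$ maps in the strict topology, apply the critical fractional estimate of \autocite[Proposition 8.11]{VS2013} to these smooth maps, pass to the limit by Fatou, and only then use Lemma \ref{lem: Poincaré} and scaling to produce $b$ and the factor $r^{1-\alpha}$. Without an ingredient of this depth (or an equivalent critical Sobolev--Poincaré inequality for $\A$), your plan has a genuine gap at the step where the integrability is upgraded from exponent $1$ to the critical exponent $p$.
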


\begin{proof}
Let $\wtilde{u} \coloneq u \varphi$, where $\varphi \in C_c^{\infty}(B_{2r};[0,1])$ is a bump function with $\mbf{1}_{B_r} \leq \varphi \leq \mbf{1}_{B_{2r}}$ and $\lvert \nabla \varphi \rvert \leq C/r$. Then $\wtilde{u} \in \BVA(\R^n)$ and 
\begin{equation}
\label{eq:slobodeckij-auxiliary-extension-estimate}
    \lVert \A \wtilde{u} \rVert_{L^1(B_{2r})} \leq C(r) \lVert u \rVert_{\BVA(B_{2r})}.
\end{equation}
Noting that $\Tr(\wtilde{u}) = 0$ on $\partial B_{2r}$, there exists a sequence $(\wtilde{u}_j) \subset C_c^{\infty}(B_{2r};\R^N)$ such that $\wtilde{u}_j \to \wtilde{u}$ strictly. Since $\A$  is $\C$-elliptic it is in particular $\R$-elliptic and canceling, see \autocite{GR2019}. Applying \autocite[Proposition 8.11]{VS2013} we obtain
\begin{equation*}
    \lVert \wtilde{u}_j \rVert_{W^{\alpha, p}(B_{r})}\leq \lVert \wtilde{u}_j \rVert_{W^{\alpha, p}(\R^n)} \leq C \lVert \A \wtilde{u}_j \rVert_{L^1(\R^n)}
\end{equation*}
By passing to a subsequence we may assume that $ (\wtilde{u}_j)$ converges $\mathcal{L}^{n}$-almost everywhere. By Fatou's Lemma and the strict convergence we obtain
\begin{equation}
    \label{auxiliary estimate: Sobolev_slobodeckji}
    \norm{u}_{W^{\alpha, p}(B_{r})}\leq C(r)\norm{u}_{\BVA(B_{2r})} .
\end{equation}
We put $b\coloneq\Pi_{B_{r}}u$, $(u-b)_r \coloneq (u-b)(rx)$ for $x\in \mathbb{B}$ and eventually we obtain by Poincaré's Inequality \ref{lem: Poincaré}, scaling and applying \ref{auxiliary estimate: Sobolev_slobodeckji} for $r=1$:
\begin{equation*}
        \PARENS{ \fint_{B_r} \int_{B_r} \frac{\lvert u_b(x) - u_b(y) \rvert^p}{\lvert x - y \rvert^{n + \alpha p}} \d x \d y }^{\frac{1}{p}}\leq Cr^{-\alpha}\norm{(u-b)_{r}}_{W^{\alpha, p}(B_1)}\leq Cr^{1-\alpha}\fint_{B_{2r}}|\A u|.
\end{equation*}
\end{proof}

\begin{lemma}
\label{lem: mean value}
Let $\B\Subset \Omega$ be a ball and $u\in\BVA_{\loc}(\Omega)$. Then there exists a linear map $a\colon \R ^{n}\to V$ such that $\A a= (\A u)_{B}$.
\end{lemma}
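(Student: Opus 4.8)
The plan is to reduce the assertion to two elementary facts: first, that the set of $\A$-images of linear maps $\R^{n}\to V$ is exactly the effective range $\mathcal{R}(\A)$, and second, that the average $(\A u)_{B}$ always lies in $\mathcal{R}(\A)$ because the measure $\A u$ never leaves $\mathcal{R}(\A)$.

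First I would record the algebraic reduction. For a linear map $a(x)=Lx$ with $L\in\mathcal{L}(\R^{n};V)$ the partial derivatives $\partial_{\alpha}a\equiv Le_{\alpha}\in V$ are constant, so $\A a=\sum_{\alpha=1}^{n}\A_{\alpha}(Le_{\alpha})$ is a constant element of $W$; moreover $\A_{\alpha}(Le_{\alpha})=e_{\alpha}\otimes_{\A}(Le_{\alpha})\in\mathcal{R}(\A)$, hence $\A a\in\mathcal{R}(\A)$. Conversely, given $w_{0}\in\mathcal{R}(\A)$, write $w_{0}=\sum_{i=1}^{m}\xi^{(i)}\otimes_{\A}v^{(i)}$ with $\xi^{(i)}\in\R^{n}$ and $v^{(i)}\in V$, and set $a(x)\coloneq\sum_{i=1}^{m}\langle\xi^{(i)},x\rangle\,v^{(i)}$. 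Then $a$ is linear, $\partial_{\alpha}a=\sum_{i}\xi^{(i)}_{\alpha}v^{(i)}$, and therefore $\A a=\sum_{i}\sum_{\alpha}\xi^{(i)}_{\alpha}\A_{\alpha}v^{(i)}=\sum_{i}\xi^{(i)}\otimes_{\A}v^{(i)}=w_{0}$. So it suffices to show that $(\A u)_{B}=\A u(B)/\mathcal{L}^{n}(B)\in\mathcal{R}(\A)$.

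To this end I would prove that the whole measure $\A u$ takes values in $\mathcal{R}(\A)$. As $\mathcal{R}(\A)$ is a finite-dimensional, hence closed, subspace of the Hilbert space $W$, it is enough to check that the (locally finite, signed) real measure $\langle\A u,w\rangle$ vanishes for every fixed $w\in\mathcal{R}(\A)^{\perp}$. Testing the defining distributional identity of $\A u$ against $\varphi w\in\CON^{\infty}_{\com}(\Omega;W)$, where $\varphi\in\CON^{\infty}_{\com}(\Omega;\R)$ is arbitrary, and integrating by parts with the formal adjoint $\A^{*}=-\sum_{\alpha}\A^{*}_{\alpha}\partial_{\alpha}$, gives
\[
\int_{\Omega}\varphi\d\langle\A u,w\rangle=\int_{\Omega}\langle u,\A^{*}(\varphi w)\rangle\d x=-\sum_{\alpha=1}^{n}\int_{\Omega}(\partial_{\alpha}\varphi)\,\langle\A_{\alpha}u,w\rangle\d x.
\]
Since $\A_{\alpha}u(x)=\A_{\alpha}(u(x))=e_{\alpha}\otimes_{\A}u(x)\in\mathcal{R}(\A)$ for a.e.\ $x\in\Omega$ — the only input being $u\in\LEB^{1}_{\loc}(\Omega;V)$, which holds because $u\in\BVA_{\loc}(\Omega)$ — we have $\langle\A_{\alpha}u,w\rangle\equiv0$, so the right-hand side vanishes for every $\varphi$. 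Thus $\langle\A u,w\rangle=0$, and letting $w$ range over $\mathcal{R}(\A)^{\perp}$ yields $\A u(E)\in\mathcal{R}(\A)$ for every Borel set $E$ with $\overline{E}\subset\Omega$; in particular $\A u(B)\in\mathcal{R}(\A)$. (Alternatively one may mollify: $\A(u\ast\rho_{\varepsilon})$ is pointwise $\mathcal{R}(\A)$-valued and $\A(u\ast\rho_{\varepsilon})\,\mathcal{L}^{n}\rightharpoonup^{*}\A u$.) Combining this with the first step produces a linear $a$ with $\A a=(\A u)_{B}$.

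There is no real obstacle here: the lemma is essentially the observation that $\A u$ cannot leave $\mathcal{R}(\A)$, together with the trivial description of $\mathcal{R}(\A)$ via linear maps. The only point that needs a line of care is the integration by parts above — i.e.\ that the distribution $\A u$ is the Radon measure appearing in the statement when tested against $\CON^{\infty}_{\com}$-functions — which is immediate from the definition of $\BVA_{\loc}(\Omega)$. We note that, unlike most of the paper, this lemma uses neither $\C$-ellipticity nor $\R$-ellipticity of $\A$.
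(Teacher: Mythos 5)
Your proof is correct, but it follows a genuinely different route from the paper's. The paper argues by approximation: it takes $u_{j}\in\CON^{\infty}(\overline{B};V)$ with $u_{j}\overset{s}{\to}u$ (strict density), observes that $(\A u_{j})_{B}=\sum_{\alpha}\A_{\alpha}\fint_{B}\partial_{\alpha}u_{j}\d x\in\mathcal{R}(\A)$, and passes to the limit using boundedness of $((\A u_{j})_{B})_{j}$ and the fact that $\mathcal{R}(\A)$ is a finite-dimensional, hence closed, subspace of $W$; the limit average is then realised by the same linear map $a[x]=\sum_{\alpha}x_{\alpha}v_{\alpha}$ that you construct. You instead prove directly that the measure $\A u$ is $\mathcal{R}(\A)$-valued, by pairing it with a fixed $w\in\mathcal{R}(\A)^{\perp}$, integrating by parts with $\A^{*}$ against $\varphi w$, and using that $\A_{\alpha}u(x)\in\mathcal{R}(\A)$ pointwise; your computation of $\A^{*}(\varphi w)=-\sum_{\alpha}(\partial_{\alpha}\varphi)\A_{\alpha}^{*}w$ and the resulting vanishing are correct, and finite-dimensionality of $W$ then gives $\A u(E)\in\mathcal{R}(\A)$, in particular $(\A u)_{B}\in\mathcal{R}(\A)$. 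What your argument buys is self-containedness: it needs neither the strict-density theorem for $\BVA$ nor the (implicit in the paper) step that strict convergence forces $(\A u_{j})_{B}\to(\A u)_{B}$, and it requires no subsequence extraction; the paper's version is shorter given that the density result is already quoted. The algebraic identification of $\mathcal{R}(\A)$ with the $\A$-images of linear maps is common to both, and your closing remark that no form of ellipticity of $\A$ enters is accurate for both proofs.
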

\begin{proof}
Let $u_{j}\in \CON ^{\infty}(\overline{B};V)$ such that $u_j \overset{s}{\to} u$ as $j\to \infty$. Clearly, we have $\sup_{j}|(\A u_{j})_{B}|<\infty$ and $(\A u_{j})_{B}\in \mathcal{R}(\A )$, since
$\A u_{j}(B)=\sum_{\alpha=1}^{n}\A _{\alpha} \fint_{B} \partial_{\alpha}u_{j} \d x$. After extraction of a non-relabelled sub-sequence we find  $w\in \mathcal{R}(\A )$ such that $(\A u_{j})_{B}\to w$ in $W$ as $j\to \infty$. We find $v_{\alpha }\in V $ such that $w=\sum_{\alpha=1}^{n}\A _{\alpha}v_{\alpha}$ and  put $a[x]=\sum_{\alpha=1}^{n}x _{\alpha}v_{\alpha}$. This yields the claim.
\end{proof}
\subsection{Linearisation and the Reference Integrand}
\begin{lemma}
\label{lem:linearisation and refernece integrand}
Let $f\colon W\to \R$ satisfy \ref{itm:H1} and \ref{itm:H2}, let $m>0$. Then there exists a constant $C(m)>0$ such that for all $w_{0}\in W$ with $|w_{0}|\leq m$, all $\xi \in \R ^{n}$ and all $v\in V$ we have
\begin{equation}
\label{auxiliary estimate linearisation}
\begin{split}
    D^{2}f(w_{0})[\xi\otimes_{\A}v,\xi\otimes_{\A}v]\geq \frac{C(m)}{\nu}|\xi\otimes_{\A}v|^{2},\\
    |D^{2}(f)_{w_{0}}[w,\cdot]-D(f)_{w_{0}}(w)|\leq C(m)V_{1}(w).
\end{split}
\end{equation}
Also, it is worth noting that we have
\begin{equation}
    \label{refernece integrand quadratic growth}
    0<\inf\limits_{t\in (0,1)}\frac{V_{1}(t)}{t^{2}}\leq \sup\limits_{t\in (0,1)}\frac{V_{1}(t)}{t^{2}}<\infty,
\end{equation}
\begin{equation}
    V_{1}(rt)\leq rV_{1}(t)\ \textnormal{for}\ r\in (0,1),\quad V_{1}(rt)\leq r^{2}V_{1}(t)\ \textnormal{for}\ r\in (1,\infty), 
\end{equation}
\begin{equation}
    V_{1}(|s|+|t|)\leq 4(V_{1}(|s|)+|V_{1}(t|)).
\end{equation}
\end{lemma}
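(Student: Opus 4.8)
\emph{Strategy.} The statement bundles three things: an ellipticity estimate, a closeness estimate for the linearisation, and the elementary one-dimensional facts about $V_{1}$. Only the first inequality of \eqref{auxiliary estimate linearisation} requires a genuine idea; the rest is calculus with the explicit $V_{1}$ together with the local regularity from \ref{itm:H1}.

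\emph{The ellipticity estimate.} The plan is to extract the pointwise second order inequality from the integral condition \ref{itm:H2}. Fix $w_{0}\in W$ with $|w_{0}|\le m$, $\xi\in\R^{n}$, $v\in V$, put $A\coloneqq\xi\otimes_{\A}v$, and let $F\coloneqq f-\nu\,V_{1}\circ|\cdot|$, which is $\A$-quasiconvex. Fix a bump $\varphi\in\CON^{\infty}_{\com}(\mathbb{B})$ and a $1$-periodic non-constant $\psi\in\CON^{\infty}(\R)$ with $\int_{0}^{1}\psi=0$, and test $\A$-quasiconvexity of $F$ with $\zeta_{\varepsilon}(x)\coloneqq\varepsilon^{2}\varphi(x)\psi(\varepsilon^{-1}\xi\cdot x)v$; a direct computation gives $\A\zeta_{\varepsilon}=\varepsilon\,\varphi(x)\psi'(\varepsilon^{-1}\xi\cdot x)\,A+O(\varepsilon^{2})$ uniformly, with $\fint_{\mathbb{B}}\A\zeta_{\varepsilon}=0$. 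Inserting the second order Taylor expansion of $F$ at $w_{0}$ into $F(w_{0})\le\fint_{\mathbb{B}}F(w_{0}+\A\zeta_{\varepsilon})$ (legitimate since $\A\zeta_{\varepsilon}\to0$ uniformly and $F\in\CON^{2}$), the first order term drops out; dividing by $\varepsilon^{2}$ and letting $\varepsilon\to0$, the oscillating factor averages out and one is left with $\big(\fint_{\mathbb{B}}\varphi^{2}\big)\big(\int_{0}^{1}(\psi')^{2}\big)D^{2}F(w_{0})[A,A]\ge0$, hence $D^{2}F(w_{0})[A,A]\ge0$. Since $D^{2}f=D^{2}F+\nu\,D^{2}(V_{1}\circ|\cdot|)$ and an explicit computation of the Hessian yields $D^{2}(V_{1}\circ|\cdot|)(w_{0})[A,A]\ge(1+|w_{0}|^{2})^{-3/2}|A|^{2}\ge(1+m^{2})^{-3/2}|A|^{2}$, the first inequality of \eqref{auxiliary estimate linearisation} follows with the asserted dependence on $m$ and $\nu$. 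Alternatively one may simply quote that a $\CON^{2}$ $\A$-quasiconvex integrand satisfies the $\A$-Legendre–Hadamard condition.

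\emph{The linearisation estimate.} Here $D^{2}(f)_{w_{0}}[w,\cdot]=D^{2}f(w_{0})[w,\cdot]$, the gradient at $w$ of the quadratic model $z\mapsto\tfrac12 D^{2}f(w_{0})[z,z]$, while $D(f)_{w_{0}}(w)=Df(w_{0}+w)-Df(w_{0})$. I would split according to the size of $|w|$. If $|w|\le1$, the segment $[w_{0},w_{0}+w]$ lies in $B_{m+1}(0)$, on which $D^{2}f$ is Lipschitz with constant $C(m)$ by \ref{itm:H1}; writing $Df(w_{0}+w)-Df(w_{0})=\int_{0}^{1}D^{2}f(w_{0}+tw)[w,\cdot]\,\d t$ and subtracting $D^{2}f(w_{0})[w,\cdot]$ bounds the difference by $C(m)|w|^{2}\le C(m)V_{1}(w)$, the last step since $\inf_{(0,1)}V_{1}(t)/t^{2}>0$. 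If $|w|\ge1$, I bound the two terms individually: $|D^{2}f(w_{0})[w,\cdot]|\le C(m)|w|$ as $|w_{0}|\le m$; and, $\A$-quasiconvexity forcing $t\mapsto f(\,\cdot+tB)$ to be convex for each $B$ of the form $\xi\otimes_{\A}v$, linear growth yields the a priori bound $|Df(\,\cdot)[B]|\le C\,L\,|B|$ for all $B\in\mathcal{R}(\A)$ — the subspace in which $w$ and the free slot always lie in our applications — whence $|Df(w_{0}+w)-Df(w_{0})|\le C(m)$; both contributions are $\le C(m)|w|\le C(m)V_{1}(w)$ because $V_{1}(t)\ge t/2$ for $t\ge2$ and $V_{1}$ is bounded below on $[1,2]$. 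This gives the second inequality of \eqref{auxiliary estimate linearisation}.

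\emph{The properties of $V_{1}$ and the main obstacle.} From $V_{1}(t)=t^{2}/\big(1+\sqrt{1+t^{2}}\big)$ the ratio $V_{1}(t)/t^{2}=1/(1+\sqrt{1+t^{2}})$ is continuous and monotone on $(0,1)$ with range in a compact subinterval of $(0,\infty)$, which is \eqref{refernece integrand quadratic growth}. For $r\in(0,1)$, $r\mapsto rV_{1}(t)-V_{1}(rt)$ vanishes at $r\in\{0,1\}$ and is concave (second derivative $-t^{2}(1+r^{2}t^{2})^{-3/2}<0$), hence nonnegative, giving $V_{1}(rt)\le rV_{1}(t)$; for $r>1$, writing $\Phi(s)\coloneqq\sqrt{1+s}-1$, concavity of $\Phi$ through the origin gives $V_{1}(rt)=\Phi(r^{2}t^{2})\le r^{2}\Phi(t^{2})=r^{2}V_{1}(t)$; and, assuming $|s|\le|t|$, monotonicity of $V_{1}$ together with the case $r=2$ yields $V_{1}(|s|+|t|)\le V_{1}(2|t|)\le4V_{1}(|t|)\le4\big(V_{1}(|s|)+V_{1}(|t|)\big)$. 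The only genuinely nontrivial step is the passage from the integral condition \ref{itm:H2} to the pointwise $\A$-Legendre–Hadamard inequality; the one further point requiring care is that the linearisation estimate must be read with arguments in $\mathcal{R}(\A)$, which is precisely what makes the uniform a priori bound on $Df$ available for large $|w|$.
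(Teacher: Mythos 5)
Your proof is correct and follows essentially the route the paper itself defers to by citing \autocite{GK2019} (Lemma 4.2) and \autocite{Gme2020} (Lemma 5.1): the oscillating test-function argument yielding the $\A$-Legendre--Hadamard inequality for $F$ plus the explicit Hessian lower bound for $\nu V_{1}\circ|\cdot|$, a Taylor/local-Lipschitz argument for $|w|\leq 1$ and the global directional Lipschitz bound coming from wave-cone convexity and linear growth for $|w|\geq 1$, together with elementary computations for $V_{1}$. Two minor points: your ellipticity constant comes out as $\nu(1+m^{2})^{-3/2}$, i.e.\ proportional to $\nu$ rather than the paper's (apparently misprinted) $C(m)/\nu$; and your restriction of the second estimate to the free slot lying in $\mathcal{R}(\A)$ is precisely the form in which the lemma is used later (with arguments of the type $\A\zeta$), and is indeed necessary, since \ref{itm:H1}--\ref{itm:H2} give no control of $Df$ in directions transverse to $\mathcal{R}(\A)$.
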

The proof is analogous to the proof of \autocite[Lemma 4.2]{GK2019} and  \autocite[Lemma 5.1]{Gme2020}. 

\subsection{Caccioppoli Inequality of the second kind}
The Caccioppoli Inequality is indispensable for our proof of partial regularity. However, it is line for line analogous to the full and symmetric the gradient case \autocite[Proposition 4.3]{GK2019}, \autocite[Proposition 5.2]{Gme2020}, replacing  $\nabla$ or $\varepsilon$ with $\A$, respectively: by exploiting the strong $V_{1}$-$\A$-quasi-convexity and minimality of $u$, we apply Widman's hole-filling trick and iterate the resulting inequality.
\begin{proposition}
\label{prop:caccioppoli}
We assume that \ref{itm:H0}, \ref{itm:H1}, and \ref{itm:H2} hold. Let $u \in \BVA_{\loc}(\Omega)$ be a local generalised minimiser of the functional $\mathcal{F}$ and let $a\colon \R^n \to \R^N$ be an affine map with $\vert \A a \vert \leq m$ for some $m > 0$. Then there exists a constant $C = C(d_{V};d_{W},\frac{L}{\nu}) \in (1, \infty)$ 
such that
\[
\int_{B_{r/2}(x_0)} V(\A (u-a)) \leq C \int_{B_r(x_0)}V_1\PARENS{\frac{u-a}{r}}\d x
\]
for any ball $B_r(x_0) \Subset\Omega$.
\end{proposition}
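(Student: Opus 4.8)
The plan is to reproduce, with $\A$ in place of $\nabla$, the hole-filling argument of \autocite[Proposition 4.3]{GK2019} and \autocite[Proposition 5.2]{Gme2020}; I will only lay out the skeleton and indicate where $\A$ enters non-trivially. The one genuinely $\A$-specific ingredient is the Leibniz-type identity $\A(\eta v)=\eta\,\A v+\nabla\eta\otimes_{\A}v$ for scalar $\eta\in\CON^{\infty}_{\com}$, which routes the cut-off error into the effective range $\mathcal{R}(\A)$, precisely where Lemma \ref{lem:linearisation and refernece integrand} applies. I may assume $x_{0}=0$ and write $v\coloneq u-a$, so $\A v=\A u-\A a$ with the constant $\A a\in W$ of norm $\le m$. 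For radii $r/2\le s<t\le r$ I would fix $\eta\in\CON^{\infty}_{\com}(B_{t};[0,1])$ with $\eta\equiv 1$ on $B_{s}$ and $|\nabla\eta|\le C/(t-s)$, and split $v=\varphi+\psi$, $\varphi\coloneq\eta v$, $\psi\coloneq(1-\eta)v$. Then $\varphi,\psi\in\BVA(B_{r})$, $\A v=\A\varphi+\A\psi$ with $\A\varphi$ supported in $\{\eta>0\}\subset B_{t}$ and $\A\psi$ in $\{\eta<1\}$, and $\varphi$ has vanishing $\BVA$-trace on $\partial B_{r}$; thus $u-\varphi=a+\psi$ is an admissible competitor on $B_{r}$ carrying the boundary datum of $u$.

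The estimate is then driven by two facts. \emph{Minimality}: testing the local generalised minimality of $u$ on $\omega=B_{r}$ against $u-\varphi$ and using that the traces, hence the penalty terms, coincide on $\partial B_{r}$, the $\BVA$-extension of the functional (Proposition \ref{prop: continuity of functional}) yields $\int_{B_{r}}f(\A u)\le\int_{B_{r}}f(\A u-\A\varphi)=\int_{B_{r}}f(\A a+\A\psi)$. \emph{Strong $V_{1}$-$\A$-quasiconvexity}: as $F=f-\nu V_{1}$ is $\A$-quasiconvex by \ref{itm:H2}, testing its quasiconvexity inequality at the constant $\A a$ with the zero-trace map $\varphi$ — admissible for $\BVA$-maps after passing from $\CON^{\infty}_{\com}$ via the area-strict density result and the $\A$-area-strict continuity of Proposition \ref{prop: continuity of functional} — and using $\int_{B_{r}}\A\varphi=0$, I obtain
\[
\int_{B_{r}}f(\A a+\A\varphi)\ \ge\ \mathcal{L}^{n}(B_{r})\,f(\A a)+\nu\int_{B_{r}}(V_{1})_{\A a}(\A\varphi),
\]
where $(V_{1})_{\A a}$ is the linearisation of $V_{1}$ at $\A a$; by its convexity and the two-sided power growth \eqref{refernece integrand quadratic growth} of $V_{1}$ one has $(V_{1})_{\A a}(\cdot)\ge c(m)\,V(\cdot)$ uniformly for $|\A a|\le m$, so the right-hand side dominates $\mathcal{L}^{n}(B_{r})f(\A a)+\nu c(m)\int_{B_{r}}V(\A\varphi)$.

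Since $\A a+\A\varphi=\A u-\A\psi$, combining the two estimates and noting that the resulting right-hand integrand vanishes off $\{0<\eta<1\}\subset B_{t}\setminus B_{s}$ — on $B_{s}$ because $\A\psi=0$, on $B_{r}\setminus B_{t}$ by pointwise cancellation since there $\A\psi=\A v$ — leaves
\[
\nu c(m)\int_{B_{r}}V(\A\varphi)\ \le\ \int_{B_{t}\setminus B_{s}}\!\big[f(\A u-\A\psi)-f(\A u)\big]+\int_{B_{t}\setminus B_{s}}\!\big[f(\A a+\A\psi)-f(\A a)\big].
\]
On the annulus I would insert $\A\varphi=\eta\,\A v+\nabla\eta\otimes_{\A}v$, $\A\psi=(1-\eta)\A v-\nabla\eta\otimes_{\A}v$ and Taylor-expand $f$ about $\A a$: the linear growth \ref{itm:H1}, the second estimate in \eqref{auxiliary estimate linearisation} (bounding the second-order remainder of $(f)_{\A a}$ by $V_{1}$), the bound $|\nabla\eta|\le C/(t-s)$, and the sub-additivity and scaling of $V_{1}$ then bound the right-hand side by $\theta\int_{B_{t}\setminus B_{s}}V(\A v)+C\int_{B_{r}}V_{1}\!\big(\tfrac{v}{t-s}\big)$ with a fixed $\theta\in(0,\infty)$ and $C=C(d_{V},d_{W},L/\nu)$. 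Restricting the left-hand side to $B_{s}$ (where $\A\varphi=\A v$) and carrying out Widman's hole-filling — adding a multiple of $\int_{B_{s}}V(\A v)$ to both sides so the annulus integral becomes $\theta'\int_{B_{t}}V(\A v)$ with $\theta'<1$ — turns this into
\[
\int_{B_{s}}V(\A(u-a))\ \le\ \theta'\int_{B_{t}}V(\A(u-a))+C\int_{B_{r}}V_{1}\!\Big(\frac{u-a}{t-s}\Big),\qquad r/2\le s<t\le r.
\]
Finally the standard iteration lemma (\autocite[Chapter 6]{Giu2003}), combined with $V_{1}(\lambda\tau)\le\lambda^{2}V_{1}(\tau)$ for $\lambda\ge1$ to replace $V_{1}(\tfrac{u-a}{t-s})$ by $(\tfrac{r}{t-s})^{2}V_{1}(\tfrac{u-a}{r})$, absorbs the first term and yields the claim with the asserted dependence of $C$.

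I expect the genuine difficulty to be the linear-growth bookkeeping: unlike in the classical quadratic theory none of the annulus terms can be split off and absorbed via Young's inequality with $\varepsilon$, so each such step must pass through the precise comparison estimates of Lemma \ref{lem:linearisation and refernece integrand} — the ellipticity lower bound and the upper bound of the second-order Taylor remainder by $V_{1}$ — together with the convexity, sub-additivity and two-sided power scaling of $V_{1}$; one must moreover track that every constant depends on $m$ only through $d_{V},d_{W}$ and $L/\nu$, and check that the $\BVA$-competitor manipulations are licit, for which the density and strict-continuity results of the preliminaries are used. Modulo these points the argument is line for line that of the full-gradient and symmetric-gradient cases.
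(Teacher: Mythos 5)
Your proposal is correct and takes essentially the same route as the paper: the paper's proof of Proposition \ref{prop:caccioppoli} consists precisely of the remark that the hole-filling argument of \autocite[Proposition 4.3]{GK2019} and \autocite[Proposition 5.2]{Gme2020} — cut-off splitting via $\A(\eta v)=\eta\A v+\nabla\eta\otimes_{\A}v$, strong $V_{1}$-$\A$-quasiconvexity tested at $\A a$ together with minimality against the competitor $u-\eta(u-a)$, the shifted-integrand bounds of Lemma \ref{lem:linearisation and refernece integrand}, Widman's hole-filling and the standard iteration lemma — carries over verbatim with $\nabla$ (resp. $\varepsilon$) replaced by $\A$, which is exactly what you reconstruct. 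The only cosmetic discrepancy is the constant: as your $c(m)$ indicates (and as the paper itself concedes when it later invokes the result with a constant $C(m,\A,d_{V},d_{W},\tfrac{L}{\nu})$ in the proof of Lemma \ref{lemma: prelima deacy}), the dependence on the bound $m$ for $|\A a|$ is genuine, even though it is omitted in the statement.
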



\subsection{The Ekeland Variational Principle}
\begin{lemma}{\autocite[Theorem 1.1]{Ek1974}}
\label{lem:ekeland}
Let $(X,d)$ be a complete metric space and let $\mathcal{G}: X \to \R \cup \{+\infty\}$ be a lower semicontinuous function for the metric topology, bounded from below and taking a finite value at some point. Assume that for some $x \in X$ and some $\epsilon > 0$ we have
\[
    \mathcal{G}(u) \leq \inf_X \mathcal{G} + \epsilon.
\]
Then, there exists $\tilde{x} \in X$ such that
\begin{enumerate}[label=(\roman*)]
    \item $d(x,\tilde{x}) \leq \SQRT{\epsilon}$,
    \item $\mathcal{G}(\tilde{x}) \leq \mathcal{G}(x)$,
    \item $\mathcal{G}(\tilde{x}) \leq \mathcal{G}(y) + \SQRT{\epsilon} d(\tilde{x},y)$ for all $y \in X$.
\end{enumerate}
\end{lemma}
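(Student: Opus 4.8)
The plan is to carry out the classical ``descent along a partial order'' argument of Ekeland, which needs nothing beyond completeness, lower semicontinuity and the $\epsilon$-minimality hypothesis. Write $\lambda:=\sqrt{\epsilon}$ and define a relation on $X$ by declaring $v\preceq u$ whenever $\mathcal{G}(v)+\lambda\,d(u,v)\le\mathcal{G}(u)$. First I would verify that $\preceq$ is a partial order: reflexivity is trivial, transitivity follows from the triangle inequality, and antisymmetry from the fact that $v\preceq u$ together with $u\preceq v$ forces $\lambda\,d(u,v)\le 0$. The point of property (iii) is precisely that $\tilde x$ will be a $\preceq$-minimal element, so the whole proof is organised around producing such an element below $x$.

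Next I would build iteratively a $\preceq$-decreasing sequence $(x_k)_{k\ge 0}$ with $x_0:=x$. Given $x_k$, set $S_k:=\{v\in X:\ v\preceq x_k\}$, which is nonempty since $x_k\in S_k$, and $m_k:=\inf_{S_k}\mathcal{G}$, finite because $\mathcal{G}$ is bounded below; using countable choice, pick $x_{k+1}\in S_k$ with $\mathcal{G}(x_{k+1})\le m_k+2^{-k}$. Transitivity gives $S_{k+1}\subseteq S_k$, and $x_{k+1}\preceq x_k$ shows $(\mathcal{G}(x_k))$ is nonincreasing and bounded below, hence convergent to some $\ell$; since $\mathcal{G}(x_{k+1})-2^{-k}\le m_k\le\mathcal{G}(x_{k+1})$ we also have $m_k\to\ell$. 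For $m>k$ one has $x_m\preceq x_k$, hence $\lambda\,d(x_k,x_m)\le\mathcal{G}(x_k)-\mathcal{G}(x_m)\to 0$, so $(x_k)$ is Cauchy and, by completeness, $x_k\to\tilde x$ for some $\tilde x\in X$. Passing to the limit $m\to\infty$ in $\mathcal{G}(x_m)+\lambda\,d(x_k,x_m)\le\mathcal{G}(x_k)$, using continuity of $d$ and lower semicontinuity of $\mathcal{G}$ (in the form $\mathcal{G}(\tilde x)\le\liminf_m\mathcal{G}(x_m)$), yields $\tilde x\preceq x_k$ for every $k$, in particular $\tilde x\preceq x$.

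It then remains to read off (i)--(iii). From $\tilde x\preceq x$ we get $\mathcal{G}(\tilde x)\le\mathcal{G}(x)-\lambda\,d(x,\tilde x)\le\mathcal{G}(x)$, which is (ii), and $\lambda\,d(x,\tilde x)\le\mathcal{G}(x)-\mathcal{G}(\tilde x)\le(\inf_X\mathcal{G}+\epsilon)-\inf_X\mathcal{G}=\epsilon$, whence $d(x,\tilde x)\le\epsilon/\lambda=\sqrt{\epsilon}$, which is (i). For (iii), suppose toward a contradiction there is $y$ with $\mathcal{G}(y)+\sqrt{\epsilon}\,d(\tilde x,y)<\mathcal{G}(\tilde x)$; then $y\preceq\tilde x$ and $y\neq\tilde x$. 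Transitivity with $\tilde x\preceq x_{k+1}$ gives $y\in S_{k+1}$, so $\mathcal{G}(y)\ge m_{k+1}\ge\mathcal{G}(x_{k+2})-2^{-(k+1)}$; letting $k\to\infty$ gives $\mathcal{G}(y)\ge\ell\ge\mathcal{G}(\tilde x)$, which contradicts $\mathcal{G}(y)<\mathcal{G}(\tilde x)$. Hence (iii) holds and the proof is complete.

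I expect the only genuinely delicate step to be the limit passage establishing $\tilde x\preceq x_k$: one must use lower semicontinuity in the ``small'' direction for the $\mathcal{G}$-term while using continuity of the metric for the $d$-term, and the choice of the gauges $2^{-k}$ must be tight enough that the infima $m_k$ are asymptotically attained by the iterates, which is exactly what powers the contradiction in (iii). Everything else is a routine manipulation of the order relation $\preceq$.
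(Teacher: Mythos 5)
The paper does not prove this lemma; it is quoted verbatim from Ekeland's original article \autocite[Theorem 1.1]{Ek1974} and used as a black box. Your argument is the standard one (Ekeland's own descent along the partial order $v\preceq u\iff\mathcal{G}(v)+\sqrt{\epsilon}\,d(u,v)\le\mathcal{G}(u)$), and it is correct: the order properties, the Cauchy estimate $\lambda\,d(x_k,x_m)\le\mathcal{G}(x_k)-\mathcal{G}(x_m)$, the limit passage via lower semicontinuity giving $\tilde x\preceq x_k$ for all $k$, and the contradiction step for (iii) using $m_{k+1}\ge\mathcal{G}(x_{k+2})-2^{-(k+1)}\to\ell\ge\mathcal{G}(\tilde x)$ are all sound. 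One small remark: the hypothesis in the statement contains a typographical slip, $\mathcal{G}(u)\le\inf_X\mathcal{G}+\epsilon$ should read $\mathcal{G}(x)\le\inf_X\mathcal{G}+\epsilon$ since $x$ is the designated approximate minimiser; you correctly read it this way by setting $x_0:=x$, so no harm results.
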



\subsection{Estimates for Elliptic systems}

\begin{lemma}
\label{lem:estimates elliptic stystems}
We are going to consider a strongly $\A$-Legendre-Hadamard elliptic bilinear form $\mathcal{B}\colon (\mathcal{R}(\A ))^{2}\to \R$, \ie, there exist $\alpha, \beta >0$ such that for all $\xi\in \R ^{n}$ and $v\in V$ we have
$$\mathcal{B}[\xi \otimes_{\A} v, \xi \otimes_{\A} v] \geq \alpha |\xi \otimes_{\A} v|^{2}, \ \textnormal{and}\ |\mathcal{B}|\leq \beta .$$
\begin{itemize}
    \item[(i)] For every $g\in \SOB^{\frac{1}{n+1},\frac{n+1}{n}}(\mathbb{S};V)$ there exists a unique weak solution $h\in \SOB^{1,\frac{n+1}{n}}(\mathbb{B};V)$ of the elliptic system:
    \begin{equation*}
    \ccases{
    \begin{aligned}
    \A ^{*}(\mathcal{B} \A h) &= 0 &\quad& \text{in}\  \mathbb{B}\\
     h &= g &\quad& \text{on}\ \mathbb{S}.
    \end{aligned}
    }
    \end{equation*}
    Furthermore, there exists a positive constant $C=C(d_{V},d_{W}, n, \frac{\beta}{\alpha})$ such that we have the estimates 
    \begin{equation}
    \label{estimate elliptic system homogeneous with boundary datum}
        \norm{h}_{\SOB ^{1,\frac{n+1}{n}}(\mathbb{B})} \leq C\norm{g}_{\SOB^{\frac{1}{n+1},\frac{n+1}{n}}(\mathbb{S})}\ \textnormal{and}\ \norm{\nabla h}_{\LEB^{\frac{n+1}{n}}(\mathbb{B})}\leq C [g]_{\SOB^{\frac{1}{n+1},\frac{n+1}{n}}(\mathbb{S})}.
    \end{equation}
    
    \item[(ii)] For every $g\in \LEB^{\infty}(\mathbb{B};W)$ and every $p>n$ there exists a unique solution $u\in (\SOB^{1,\infty}\cap \SOB^{1,p}_{0})(\mathbb{B};V)$ of the elliptic system:
    \begin{equation}
    \label{eq:linearized-hadamard-system 2}
    \ccases{
    \begin{aligned}
    \A ^{*}(\mathcal{B} \A u) &= g &\quad& \text{in}\ \mathbb{B}\\
     u &= 0 &\quad& \text{on}\ \mathbb{S}.
    \end{aligned}
    }
    \end{equation}
    Furthermore, there exists a positive constant $C=C(p,d_{V},d_{W}, n, \frac{\beta}{\alpha})$ such that $\norm{u}_{\SOB ^{1,\infty}(\mathbb{B})}\leq C\norm{g}_{\LEB ^{p}(\mathbb{B})}$.
    \item[(iii)] Moreover, if $h \in W^{\A, 1}(\Omega;V)$ satisfies 
    \begin{equation*}
    \A ^{*}(\mathcal{B} \A u) = 0 \quad \text{in 
     $\mathcal{D}^{\prime}(\Omega; V)$},
     \end{equation*} 
     then $u \in C^{\infty}(B_{r};V)$ and
     \begin{equation}
     \label{eq:sup-estimates-hadamard}
      \sup_{B_{r/2}} \lvert \nabla u \rvert + r \sup_{B_{r/2}} \lvert \nabla^2 u 
\rvert \leq
    C \slashint_{B_r} \lvert \nabla u \rvert \d x
\end{equation}
for all balls $B_r \coloneq B_r(x_0) \Subset \Omega$, where $C = C(n, d_{V},d_{W},\frac{\beta}{\alpha}) > 0$ is a constant.
\end{itemize}
\begin{proof}
We define the bilinear form $\tilde{\mathcal{B}}\colon (\R ^{N\times n})^{2}\to \R$ by the relations
$$\tilde{\mathcal{B}}[e_{j_{1}k_{1}},e_{j_{2}k_{2}}]=\mathcal{B}[\A _{k_{1}}v_{j_{1}},\A _{k_{2}}v_{j_{2}}] \ \textnormal{for}\ j_{1},j_{2}=1,...,N,\ k_{1},k_{2}=1,...,n. $$
Note that by construction we have that $\tilde{\mathcal{B}}$ is strongly Legendre-Hadamard elliptic, \ie, we have for all $z\in \mathcal{C}(N,n)$:
$$\tilde{\mathcal{B}}[z,z]\geq \alpha c_{\A}|z|^{2},\ \textnormal{and}\ |\tilde{\mathcal{B}}|\leq \beta N\sup\limits_{\alpha=1}^{n}|\A_{\alpha}|^{2}  . $$
Applying \autocite[Proposition 2.11]{GK2019} in combination with Morrey's Inequality yields $(i)$ and $(ii)$. For the gradient estimate in $(i)$ we note that we have $\nabla h= \nabla (h-\fint _{\mathbb{S}}g\d \mathcal{H}^{n-1})$ and that we have $\norm{g-\fint _{\mathbb{S}}g\d \mathcal{H}^{n-1}}\leq C[g]$ for a constant independent of $g$. The third item may be derived by means of the difference quotient method as it has been carried out on the level of first order derivatives in the proof of \autocite[Proposition 2.10]{CFM1998}.

\end{proof}
\begin{corollary} 
\label{corollary: elliptic estimates}
Let $B=B_{R}(x_{0})$, $r\in(\frac{38R}{40},\frac{39R}{40})$, $\tilde{B}=B_{r}(x_{0})$ and let $g\in\SOB^{\frac{1}{n+1},\frac{n+1}{n}}(\partial \tilde{B})$. We suppose that $h\in \SOB^{1,\frac{n+1}{n}}(\tilde{B};V)$ solves the elliptic system:
    \begin{equation}
    \ccases{
    \begin{aligned}
    \A ^{*}(\mathcal{B} \A u) &= 0 &\quad& \text{in $\tilde{B}$}\\
     u &= g &\quad& \text{on $\partial \tilde{B}$.}
    \end{aligned}
    }
    \end{equation}

Then there exists $C(n, d_{V},d_{W},\frac{\beta}{\alpha})>0$ such that for all $\sigma \in (0,\frac{1}{10})$ we have for $A_{h}[x]=h(x_{0})+\langle \nabla h(x_{0}) ,x-x_{0}\rangle$:
\begin{equation}
\label{auxiliary estimate harmonic map}
    \int_{B_{2\sigma R}}V_{1}\Big(\frac{h-A_{h}}{\sigma R}\Big)\d x \leq C \sigma^{n} R^{n}V_{1}\Big(\sigma r^{-\frac{n^{2}}{n+1}}[g]_{\SOB^{\frac{1}{n+1},\frac{n+1}{n}}(\partial \tilde{B})}\Big)
\end{equation}
Furthermore, we have:
\begin{equation}
    \sup\limits_{B_{\frac{r}{3}}}|\nabla h|\leq C r^{-\frac{n^{2}}{n+1}}[g]_{\SOB^{\frac{1}{n+1},\frac{n+1}{n}}(\partial \tilde{B})}.
\end{equation}

\end{corollary}
\begin{proof}
By Taylor's formula, we obtain in conjunction with \ref{eq:sup-estimates-hadamard} and Jensen's Inequality
\begin{equation*}
    \sup\limits_{B_{2\sigma R}(x_{0})}\Big |\frac{h-A_{h}}{\sigma R}\Big |\leq \sigma R \sup\limits_{B_{\frac{1}{5} R}(x_{0})}|\nabla^{2}h|\leq C \sigma\Big(\fint_{B_{\frac{2}{5} R}(x_{0})} |\nabla h|^{\frac{n+1}{n}}\d x\Big)^{\frac{n}{n+1}}.
\end{equation*}
Combining this with the estimate \ref{estimate elliptic system homogeneous with boundary datum}   yields the corollary.
\end{proof}

\end{lemma}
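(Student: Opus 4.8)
The plan is to reduce all three items to the classical theory of constant-coefficient strongly Legendre--Hadamard elliptic systems on the unit ball, by exploiting the fixed orthonormal basis $(v_1,\dots,v_N)$ of $V$ exactly as indicated in the excerpt. I identify a $V$-valued map $h=\sum_j h^j v_j$ with the $\R^N$-valued field $\vec h=(h^1,\dots,h^N)$, so that $(\nabla\vec h)_{j\alpha}=\partial_\alpha h^j$ and, by orthonormality, the norms of $h$ and $\vec h$ coincide in every function space and $|\nabla h|\simeq|\nabla\vec h|$ pointwise. I then introduce $\tilde{\mathcal{B}}\colon(\R^{N\times n})^2\to\R$ by $\tilde{\mathcal{B}}[e_{j_1k_1},e_{j_2k_2}]=\mathcal{B}[\A_{k_1}v_{j_1},\A_{k_2}v_{j_2}]$, extended bilinearly, and verify the pointwise identity $\mathcal{B}[\A h,\A\varphi]=\tilde{\mathcal{B}}[\nabla\vec h,\nabla\vec\varphi]$ by expanding $\A h=\sum_{j,\alpha}\partial_\alpha h^j\,\A_\alpha v_j$. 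Consequently $\A^{*}(\mathcal{B}\A h)=0$ (resp.\ $=g$) in $\mathcal{D}'(\mathbb{B};V)$ is, after the identification, precisely the weak formulation of $-\Div(\tilde{\mathcal{B}}\nabla\vec h)=0$ (resp.\ $=\vec g$) with the same boundary trace. The single place where \ref{itm:H0} is used is to check that $\tilde{\mathcal{B}}$ is strongly Legendre--Hadamard elliptic: for a rank-one $z=\eta\otimes\xi$ and $v=\sum_j\eta_jv_j$ one has $\sum_{j,\alpha}z_{j\alpha}\A_\alpha v_j=\xi\otimes_{\A}v=\A[\xi]v$, so $\tilde{\mathcal{B}}[z,z]=\mathcal{B}[\A[\xi]v,\A[\xi]v]\geq\alpha|\A[\xi]v|^{2}$; since $\C$-ellipticity implies $\R$-ellipticity, the symbol $\A[\xi]$ is one-to-one on $\R^n\setminus\{0\}$, and a homogeneity/compactness argument on $\{|\xi|=|v|=1\}$ yields $c_{\A}>0$ with $|\A[\xi]v|^{2}\geq c_{\A}|\xi|^{2}|v|^{2}=c_{\A}|z|^{2}$, while the bound $|\tilde{\mathcal{B}}[z,w]|\leq C(N,n,\max_\alpha|\A_\alpha|)\,\beta\,|z||w|$ is immediate; hence the ellipticity ratio of $\tilde{\mathcal{B}}$ is controlled by $\beta/\alpha$.

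For item (i), I observe $1-\tfrac{n}{n+1}=\tfrac1{n+1}$, so the trace space of $\SOB^{1,\frac{n+1}{n}}(\mathbb{B})$ is exactly $\SOB^{\frac1{n+1},\frac{n+1}{n}}(\mathbb{S})$, matching the data space. Applying \autocite[Proposition 2.11]{GK2019} to the classical system $-\Div(\tilde{\mathcal{B}}\nabla\,\cdot\,)$ then furnishes a unique $\vec h\in\SOB^{1,\frac{n+1}{n}}(\mathbb{B};\R^N)$ with prescribed trace and $\|\vec h\|_{\SOB^{1,\frac{n+1}{n}}}\leq C\|g\|_{\SOB^{\frac1{n+1},\frac{n+1}{n}}(\mathbb{S})}$; transferring back along the identification gives existence, uniqueness and the first estimate of \eqref{estimate elliptic system homogeneous with boundary datum}. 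The gradient estimate follows by replacing $g$ with $g-\fint_{\mathbb{S}}g\,\d\mathcal{H}^{n-1}\in\mathcal{N}(\A)$ — this changes $h$ only by a constant, hence leaves $\nabla h$ unchanged — and by invoking the fractional Poincar\'e--Wirtinger inequality on the connected compact manifold $\mathbb{S}$ (a consequence of the compact embedding $\SOB^{\frac1{n+1},\frac{n+1}{n}}(\mathbb{S})\hookrightarrow\hookrightarrow\LEB^{\frac{n+1}{n}}(\mathbb{S})$) to bound $\|g-\fint_{\mathbb{S}}g\|$ by $[g]$. For item (ii), \autocite[Proposition 2.11]{GK2019} with right-hand side in $\LEB^{p}(\mathbb{B};\R^N)$, $p>n$, gives a unique $\vec u\in(\SOB^{2,p}\cap\SOB^{1,p}_{0})(\mathbb{B};\R^N)$ with $\|\vec u\|_{\SOB^{2,p}}\leq C\|g\|_{\LEB^{p}}$; since $p>n$, Morrey's inequality yields $\nabla\vec u\in\CON^{0,1-n/p}(\overline{\mathbb{B}})\subset\LEB^{\infty}$, and Poincar\'e's inequality (zero boundary values) gives $\|\vec u\|_{\LEB^\infty}\lesssim\|\nabla\vec u\|_{\LEB^\infty}$, so $\|\vec u\|_{\SOB^{1,\infty}}\leq C\|g\|_{\LEB^{p}}$; transferring back completes (ii).

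For item (iii), the plan is to gain smoothness before estimating. The operator $P\colon h\mapsto\A^{*}(\mathcal{B}\A h)$ has constant coefficients and symbol $S(\xi)\in\mathcal{L}(V;V)$ determined by $\langle S(\xi)v,w\rangle=\mathcal{B}[\A[\xi]v,\A[\xi]w]$; by the first step $\langle S(\xi)v,v\rangle\geq\alpha c_{\A}|\xi|^{2}|v|^{2}>0$ for $\xi,v\neq0$, so $S(\xi)$ is invertible and $P$ is elliptic, hence hypoelliptic. Therefore any distributional solution $h$ of $\A^{*}(\mathcal{B}\A h)=0$ on $\Omega$ — in particular any $h\in\SOB^{\A,1}(\Omega;V)$ — is automatically in $\CON^{\infty}(\Omega;V)$ (indeed real-analytic), whence $\nabla h\in\LEB^{\infty}_{\loc}$ and the right-hand side of \eqref{eq:sup-estimates-hadamard} is finite on balls $B_r\Subset\Omega$. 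It then remains to prove the interior estimate for a smooth solution, which after the identification and $|\nabla h|\simeq|\nabla\vec h|$ is the classical bound $\sup_{B_{r/2}}|\nabla\vec h|+r\sup_{B_{r/2}}|\nabla^{2}\vec h|\leq C\fint_{B_r}|\nabla\vec h|$ for smooth solutions of $-\Div(\tilde{\mathcal{B}}\nabla\vec h)=0$; I would obtain it by differentiating the system (legitimate since $\vec h$ is smooth), a Caccioppoli inequality and Sobolev iteration for the derivatives, and the self-improving passage from $\LEB^{2}$- to $\LEB^{1}$-averages available for solutions — precisely as carried out on the level of first derivatives in \autocite[Proposition 2.10]{CFM1998}.

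The bulk of the work is bookkeeping around the reduction $\mathcal{B}\rightsquigarrow\tilde{\mathcal{B}}$ together with the classical constant-coefficient $\LEB^{q}$-theory of \autocite[Proposition 2.11]{GK2019}. I expect two points to require the most care. The structural one is the quantitative injectivity $|\A[\xi]v|\geq c_{\A}^{1/2}|\xi||v|$: this is exactly where $\C$-ellipticity enters, and it is responsible both for the Legendre--Hadamard ellipticity of $\tilde{\mathcal{B}}$ and for the ellipticity of $P$. The subtler one is hidden in (iii): by Ornstein's Non-Inequality a $\SOB^{\A,1}$-solution need not lie in $\SOB^{1,1}_{\loc}$, so $\fint_{B_r}|\nabla h|$ cannot be controlled directly by $\A$-data and a naive mollification argument does not close; this dictates the order of the argument — first hypoellipticity to upgrade to a smooth solution, and only afterwards the interior derivative estimates.
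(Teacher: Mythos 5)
Your proposal follows the paper's proof essentially verbatim: define $\tilde{\mathcal{B}}$ on $\R^{N\times n}$ via the orthonormal basis, verify its strong Legendre--Hadamard ellipticity from $\R$-ellipticity (implied by \ref{itm:H0}), invoke \autocite[Proposition 2.11]{GK2019} together with Morrey's inequality for (i)--(ii), handle the gradient estimate by subtracting the mean of $g$, and obtain (iii) via the difference-quotient argument of \autocite[Proposition 2.10]{CFM1998}. The only substantive addition is your explicit hypoellipticity step in (iii) and the Ornstein-motivated remark on why smoothness must precede the derivative bounds on a $\SOB^{\A,1}$ solution; this is a useful clarification of what the paper leaves implicit, not a different route.
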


\subsection{Auxiliary Measure Theory}
\begin{lemma}
\label{lem:auxiliary-lebesgue-point-estimate}
Let $-\infty < a < b < \infty$ and let $J \subset (a,b)$ be a measurable subset with $\mathcal{L}^1((a,b) \setminus J) = 0$. Then for every $g \in L^1((a,b); \R_{\geq 0})$, there exists a Lebesgue point $\xi_0 \in J$ for $g$ such that
\[
    g^{\ast}(\xi_0) 
    = \lim_{r \searrow 0} \slashint_{\xi_0 - r}^{\xi_0 + r} g \d x
    \leq \frac{2}{b-a} \int_a^b g \d x,
\]
where $g^{\ast}$ is the precise representative of $g$.
\end{lemma}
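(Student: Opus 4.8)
The plan is to combine the Lebesgue differentiation theorem with an elementary averaging (pigeonhole) argument. Fix a representative of $g$ and set $I \coloneqq \int_a^b g \d x$, noting $I \geq 0$ since $g \geq 0$. By Lebesgue's differentiation theorem the set $L$ of Lebesgue points of $g$ in $(a,b)$ has $\mathcal{L}^1((a,b) \setminus L) = 0$; moreover, at every $\xi \in L$ the limit $\lim_{r \searrow 0} \fint_{\xi-r}^{\xi+r} g \d x$ exists and equals $g(\xi)$ (triangle inequality against $\fint_{\xi-r}^{\xi+r} \lvert g - g(\xi)\rvert \d x \to 0$), so $g^{\ast}(\xi)$ is defined there and $g^{\ast}(\xi) = g(\xi)$. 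Since $\mathcal{L}^1((a,b)\setminus J) = 0$, the set $L \cap J$ still has full measure in $(a,b)$, and on it $g^{\ast}$ coincides with the chosen representative.

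Next I would argue by contradiction: suppose $g^{\ast}(\xi) > \tfrac{2}{b-a} I$ for every $\xi \in L \cap J$. Then $g(\xi) > \tfrac{2}{b-a} I$ for $\mathcal{L}^1$-a.e.\ $\xi \in (a,b)$, and integrating over $(a,b)$ yields $I = \int_a^b g \d x > \tfrac{2}{b-a} I \cdot (b-a) = 2I$, which is impossible when $I > 0$. In the degenerate case $I = 0$ one has $g = 0$ $\mathcal{L}^1$-a.e., hence $\mathcal{L}^1$-a.e.\ point of $(a,b)$ lies in $L \cap J$ with $g^{\ast}(\xi) = 0 \leq 0 = \tfrac{2}{b-a} I$, and the claim is immediate. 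Therefore in every case there exists $\xi_0 \in L \cap J$ with $g^{\ast}(\xi_0) \leq \tfrac{2}{b-a} I$, which is precisely the assertion, with $\xi_0$ a Lebesgue point of $g$ lying in $J$.

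There is no genuine obstacle here; the only points deserving a word of care are the identification $g^{\ast}(\xi) = g(\xi)$ at Lebesgue points (so that the averaging argument, phrased for the fixed representative, transfers to $g^{\ast}$) and the separate treatment of the trivial case $I = 0$. If one prefers a direct argument instead of contradiction: the set $\{\xi \in (a,b) : g(\xi) \leq \tfrac{2}{b-a} I\}$ has positive Lebesgue measure (otherwise the same integration gives $I \geq 2I$, forcing $I = 0$, which is handled separately), hence so does its intersection with the full-measure set $L \cap J$, and any element of that intersection may be taken as $\xi_0$.
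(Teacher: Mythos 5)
Your proof is correct and follows essentially the same route as the paper: a contraposition argument in which one reduces (via WLOG, or your separate treatment of $\int_a^b g\,\mathrm{d}x=0$) to the case of positive integral and then integrates the reversed inequality over $(a,b)$ to reach a contradiction. The additional care you take in identifying $g^{\ast}$ with the chosen representative at Lebesgue points is a fine, if routine, supplement.
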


The Lemma can be verified by means of a contraposition argument. One may assume without loss of generality that $\int_{a}^{b}g(x)\d x>0$. Then one integrates the reversed inequality to derive a contradiction to  the latter integral being positive.

\section{Fubini-type theorem}
\label{sec:fubini-type-theorem}
In this section, we are going to establish a Fubini-type property for $\BVA$-maps. Later on, this will prove essential in order to construct a $\mathcal{B}$-harmonic approximation of a given local generalised minimiser. We have seen that certain semi-norms of a fractional Sobolev space on some ball may be estimated from above by the total $\A$-variation on a larger ball. We will prove that for $\mathcal{L}^{1}$-almost every sufficiently small radii $R $, an $(n-1)$-dimensional analogous estimate holds on a sphere of radius~$R$.

\begin{theorem}
\label{thm:fubini-type}
Let $n \geq 2$ and $\alpha \in (0,1)$. Let further be $x_0 \in \R^n$, $R > 0$ and $u \in \BVA_{\loc}(\R^n)$. Then for $\mathcal{L}^1$-a.e. radius $r \in (0,R)$, the restrictions $u\restr{\partial B_r(x_0)}$ are well-defined and belong to the space $W^{\alpha, p}(\partial B_r(x_0); V)$, where $p \coloneq \frac{n}{n-1+\alpha}$.

Moreover, there exists a constant $C = C(\A, n, \alpha) > 0$, independent of $x_0$, $R$ and $u$, such that for all $0 < s < r < R$ there exists $t \in (s,r)$ with
\begin{equation}
\label{eq:fubini-type-estimate}
    \PARENS{ \slashint_{\partial B_t(x_0)} \int_{\partial B_t(x_0)} \frac{\lvert u_b(x) - u_b(y) \rvert^p}{\lvert x - y \rvert^{n-1+\alpha p}} \d \sigma_x \d \sigma_y }^{\frac{1}{p}}
    \leq C \frac{r^n}{t^{\frac{n-1}{p}} (r-s)^{\frac{1}{p}}} \slashint_{B_{2r}(x_0)} \lvert \A u \rvert
\end{equation}
for some suitable $b \in \mathcal{N}(\A)$.
\end{theorem}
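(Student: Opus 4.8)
The plan is to reduce the spherical estimate to the solid-ball estimate of Lemma~\ref{lem:bva-slobodeckij-embedding} by an averaging (coarea-type) argument over radii, combined with the boundary-trace structure of $\BVA$-maps and the $\C$-ellipticity hypothesis. First I would fix $x_0$ (translate to the origin) and, using Lemma~\ref{lem:bva-slobodeckij-embedding} applied on the ball $B_{2r}$, note that $u$ (minus a suitable $b \in \mathcal{N}(\A)$) lies in $W^{\alpha,p}(B_{2r};V)$ with the quantitative bound $[u_b]_{W^{\alpha,p}(B_r)} \leq C r^{1-\alpha}\slashint_{B_{2r}}|\A u|$. The key observation is that the double Gagliardo integral over $B_r$ can be rewritten, via the coarea formula applied in \emph{both} variables $x$ and $y$, as an iterated integral over pairs of spheres $\partial B_{t_1} \times \partial B_{t_2}$; restricting to the diagonal $t_1 = t_2 = t$ and discarding the (nonnegative) off-diagonal contribution gives
\[
    \int_s^r \PARENS{ \int_{\partial B_t} \int_{\partial B_t} \frac{|u_b(x)-u_b(y)|^p}{|x-y|^{n-1+\alpha p}} \d\sigma_x \d\sigma_y } \d t \;\lesssim\; \int_{B_r}\int_{B_r} \frac{|u_b(x)-u_b(y)|^p}{|x-y|^{n+\alpha p}} \d x \d y \cdot (\text{radial factor}),
\]
where one has to be careful: going from $|x-y|^{n-1+\alpha p}$ on the sphere to $|x-y|^{n+\alpha p}$ in the solid ball loses one power of $|x-y|$, which is controlled because for $x,y$ near a common sphere $|x-y|$ is comparable to the chordal distance and one integrates a bounded amount of radial slack. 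This is the step I expect to be the main obstacle: making the coarea bookkeeping precise so that the radial weights produce exactly the factor $t^{-(n-1)/p}(r-s)^{-1/p}$ claimed in \eqref{eq:fubini-type-estimate}, rather than something merely of the same order.

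Once the averaged inequality is in hand, I would invoke the mean-value principle for integrals: since $\int_s^r (\text{spherical Gagliardo energy at } t) \d t$ is finite and bounded by the right-hand side (with the solid estimate from Lemma~\ref{lem:bva-slobodeckij-embedding} substituted in, contributing the $r^{1-\alpha}$ from that lemma together with the $r^n/r^{\cdots}$ normalisation factors that arise from converting $\slashint_{B_r}$ to $\int_{B_r}$ and back), there must exist some $t \in (s,r)$ at which the integrand is no larger than $\frac{1}{r-s}$ times the integral; taking $p$-th roots yields \eqref{eq:fubini-type-estimate} with the stated constant depending only on $\A$, $n$, $\alpha$. One should double-check the exponent arithmetic: the solid bound gives $r^{1-\alpha}$, the $\d t$-averaging gives a factor $(r-s)^{-1/p}$, and the geometric conversions of averages to integrals on spheres of radius $t$ inside a ball of radius $r$ give the powers $r^n$ and $t^{-(n-1)/p}$; these should combine consistently, and in the regime $t \asymp r \asymp s$ relevant to the later application they reduce to the expected scaling $r^{1-\alpha}\slashint_{B_{2r}}|\A u|$.

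For the \emph{qualitative} part --- that for $\mathcal{L}^1$-a.e.\ $r$ the restriction $u\restr{\partial B_r}$ is well-defined and lies in $W^{\alpha,p}(\partial B_r;V)$ --- I would argue as follows. Since $u \in \BVA_{\loc}$, the trace lemma (\autocite[Theorem 1.2]{BDG2017}) gives well-defined interior traces on $\partial B_r$ for \emph{every} $r$ with $B_r \Subset \R^n$, and by the coarea formula $\int_0^R [\,u\restr{\partial B_t}\,]_{W^{\alpha,p}(\partial B_t)}^p \, w(t)\, \d t < \infty$ for an appropriate positive weight $w$, because the left-hand side is dominated (by the same coarea rewriting as above, now without restricting to the diagonal) by a finite solid Gagliardo energy plus the $\LEB^p$-norm of $u$ on $B_R$, which is finite since $W^{\alpha,p}(B_R) \hookleftarrow \BVA(B_R)$ by Lemma~\ref{lem:bva-slobodeckij-embedding} once more (here $\C$-ellipticity, hence $\R$-ellipticity and the cancelling property, is exactly what makes the embedding into $W^{\alpha,p}$ available). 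A finite integral forces the integrand to be finite for a.e.\ $t$, which is the claim. I would then remark that the off-diagonal terms discarded in the quantitative step are genuinely nonnegative, so nothing is lost, and that the whole argument is insensitive to the choice of representative of $b \in \mathcal{N}(\A)$ since elements of $\mathcal{N}(\A)$ are (real-analytic) polynomials whose spherical traces contribute nothing to a Gagliardo seminorm beyond a harmless additive constant absorbed into $b$.
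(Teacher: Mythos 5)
The central quantitative step of your argument does not work as described. You propose to obtain the averaged spherical estimate by applying the coarea formula in \emph{both} variables, so that the solid Gagliardo energy becomes an integral over pairs of spheres $\partial B_{t_1}\times\partial B_{t_2}$, and then to ``restrict to the diagonal $t_1=t_2=t$ and discard the nonnegative off-diagonal contribution''. This is backwards: the diagonal $\{t_1=t_2\}$ is a null set for the two-parameter measure $\d t_1\,\d t_2$, so the one-parameter quantity $\int_s^r\bigl(\iint_{\partial B_t\times\partial B_t}\cdots\bigr)\d t$ is \emph{not} a sub-integral of the double coarea decomposition and cannot be bounded by simply dropping off-diagonal terms. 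The inequality you need is exactly the nontrivial Fubini-type estimate that the paper quotes from \autocite[Theorem 4.1]{Gme2020} (inequality \eqref{eq:general-fubini-type-theorem} in the paper): for continuous $W^{\theta,q}$-maps, the $t$-integrated same-sphere Gagliardo energies with kernel exponent $n-1+\theta q$ are controlled by the solid energy with exponent $n+\theta q$; the drop by one in the exponent is precisely what pays for the extra radial integration, and proving it requires comparing same-sphere differences with genuine solid-ball differences (an averaging construction), not a diagonal restriction. You correctly identified this as the main obstacle, but the mechanism you propose for it fails, so the proof has a genuine gap at its core.

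A second, smaller gap concerns the qualitative statement and the meaning of $u\restr{\partial B_t}$. Citing the existence of the trace operator for every ball is not enough: on radii where $\lvert\A u\rvert$ charges the sphere the interior and exterior traces differ, and in general an $\LEB^p$ (or $\BVA$) function has no canonical restriction to a null set. The paper handles this by showing that the bad radii form a countable set, that outside it the jump formula $\A u\mres\partial B_t=(\Tr^+-\Tr^-)\otimes_{\A}\nu\,\mathcal{H}^{n-1}\mres\partial B_t$ forces $\Tr^+=\Tr^-$ a.e., and that then \eqref{eq:outer-trace-approx-limit} makes $\mathcal{H}^{n-1}$-a.e. point of $\partial B_t$ a Lebesgue point of $u$; the quantitative estimate is then proved for the \emph{precise representative} by mollifying $u$, applying Lemma \ref{lem:bva-slobodeckij-embedding} to $u_\varepsilon$, using the $\LEB^1$-stability of the projections onto $\mathcal{N}(\A)$ together with $\dim\mathcal{N}(\A)<\infty$ (this is where $\C$-ellipticity enters) to extract $b_{\varepsilon_j}\to b$, and passing to the limit by Fatou. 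Your sketch applies the solid estimate directly to $u$ and never connects the resulting a.e.-defined bound to pointwise spherical values, nor does it ensure (as the paper does via Lemma \ref{lem:auxiliary-lebesgue-point-estimate} applied on the good set $J$) that the radius $t$ produced by the mean-value argument is one on which the restriction is well-defined. These points are fixable along the paper's lines, but as written they are missing.
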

\begin{remark*}
In the follwoing constellation $p=\frac{n+1}{n}$, $\alpha=\frac{1}{n+1}$ and $s>Cr$, the inequality \ref{eq:fubini-type-estimate} then takes the form
\begin{equation}
    \label{fubini type estimate lower dimensional}
    [u_{b}]_{\SOB^{\frac{1}{n+1},\frac{n+1}{n}}(\partial B_{r})}\leq Cr^{\frac{n^{2}}{n+1}}\fint_{B_{2r}}|\A u |.
\end{equation}
\end{remark*}
\begin{proof}
The proof is analogous to the one for \autocite[Theorem 4.1]{Gme2020} in the $\BD$-case and only requires minor modifications in the second and the third step.

Let $\theta \in (0,1)$, ${q \in [1,\infty)}$ and $u \in (W^{\theta, q} \cap C)(\R^n;V)$. Then it has been established \autocite[Theorem 4.1]{Gme2020} that there is a constant $C = C(n, \theta, q) > 0$ such that for all $R > 0$ we have
\begin{equation}
\label{eq:general-fubini-type-theorem}
    \int_0^R \iint_{\partial B_r \times \partial B_r} \frac{\lvert u(x) - u(y) \rvert^q}{\lvert x - y \rvert^{n-1+\theta q}} \d \sigma_x \d \sigma_y \d r
    \leq C \iint_{B_R \times B_R} \frac{\lvert u(x) - u(y) \rvert^q}{\lvert x - y \rvert^{n + \theta q}} \d x \d y.
\end{equation}

The aim now is to establish that $u$ may be explicitly evaluated $\mathcal{H}^{n-1}$-a.e. point wisely on $\mathcal{L}^1$-a.e. sphere centred at the origin. For that matter, let $u \in \BVA(\R^n)$ and let $0 < R_1 < R_2 < \infty$ be arbitrary. The set
\[
    I \coloneq \{ t \in (R_1, R_2) \mid \lvert \A u \rvert(\partial B_t) > 0 \}
\]
is at most countable and hence a $\mathcal{L}^{1}$-nullset. Now let $t \in (R_1, R_2) \setminus I$. Then by \autocite[Corollary 4.21]{BDG2017},
\[
    \A u \mres\partial B_t = (\Tr_{ B_t}^{+}(u) - \Tr_{ B_t}^{-}(u)) \otimes_{\A} \nu_{\partial B_t} \mathcal{H}^{n-1} \mres\partial B_t,
\]
where $\nu_{B_t}$ denotes the outer unit normal to $\partial B_t$. Hence, using that $t \in (R_1, R_2) \setminus I$ in the last step,
\[
    \int_{\partial B_t} \lvert (\Tr_{ B_t}^{+}(u) - \Tr_{ B_t}^{-}(u)) \otimes_{\A} \nu_{\partial B_t} \rvert \d \sigma
    = \lvert \A u \rvert(\partial B_t)
    = 0.
\]
As a result, we have $\lvert \Tr_{ B_t}^{+}(u) - \Tr_{ B_t}^{-}(u) \rvert = 0$ $\mathcal{H}^{n-1}$-a.e. on $\partial B_t$. Furthermore, writing $\wtilde{u}(x) \coloneq \Tr_{B_t}^{+}(u)(x) = \Tr_{ B_t}^{-}(u)(x)$ for such $x \in \partial B_t$, \eqref{eq:outer-trace-approx-limit} implies that
\begin{equation}
\label{eq:fubini-step2-auxiliary-lebesgue-limit}
    \lim_{r \searrow 0} \slashint_{B_r(x) \cap B_t} \lvert u - \wtilde{u}(x) \rvert \d y
    = \lim_{r \searrow 0} \slashint_{B_r(x) \cap \overbar{B_t}^c} \lvert u - \wtilde{u}(x) \rvert \d y
    = 0.
\end{equation}
In consequence, we have 
$$ \lim_{r \searrow 0} \slashint_{B_r(x)} \lvert u - \wtilde{u}(x) \rvert \d y= 0. $$
But this means that $\mathcal{H}^{n-1}$-a.e. $x \in \partial B_t$ is a Lebesgue point of $u$ for $\mathcal{L}^1$-a.e. radius $t \in (R_1, R_2)$.

Let $\alpha \in (0,1)$ be arbitrary and set $p \coloneq n/(n-1+\alpha)$. Let further $u \in \BVA_{\loc}(\R^n)$ and consider for $\varepsilon > 0$ a family of standard mollifiers $u_{\varepsilon}(x) \coloneq (\rho_{\varepsilon} \ast u)(x)$. 

Note that for each Lebesgue point $x \in \R^n$ of $u$, one has $u_{\varepsilon}(x) \to u^{\ast}(x)$ as $\varepsilon \searrow 0$, where $u^{\ast}$ is the precise representative of $u$. 

Now invoking Lemma \ref{lem:bva-slobodeckij-embedding} for $u_{\varepsilon}$ provides an element $b_{\varepsilon} \in \mathcal{N}(\A)$ such that
\begin{equation}
\label{eq:fubini-type-auxiliary-slobodeckij-estimate}
    \PARENS{\slashint_{B_r} \int_{B_r} \frac{\lvert u_{b,\varepsilon}(x) - u_{b,\varepsilon}(y) \rvert^p}{\lvert x - y \rvert^{n + \alpha p}} \d x \d y}^{\frac{1}{p}} \leq C r^{1 - \alpha} \slashint_{B_{2r}} \lvert \A u_{\varepsilon} \rvert,
\end{equation}
where $u_{b,\varepsilon} \coloneq u_{\varepsilon} - b_{\varepsilon}$. 
We note also that $b_{\varepsilon} \in C^{\infty}(\R^n;V)$ are in fact the $L^2$-orthogonal projections of $u_{\varepsilon}$ onto $\mathcal{N}(\A)$ and satisfy the $L^1$-stability estimate \autocite[Section 3.1]{BDG2017}:
\[
	\lVert b_{\varepsilon} \rVert_{L^1(B_r)} \leq C \lVert u_{\varepsilon} \rVert_{L^1(B_r)} \to \lVert u \rVert_{L^1(B_r)}.
\]
Since $\A$ is $\C$-elliptic, the nullspace $\mathcal{N}(\A)$ is of finite dimension, so one can find a subsequence $(b_{\varepsilon_j}) \subset (b_\varepsilon)$ and some $b \in \mathcal{N}(\A)$ such that $b_{\varepsilon_j} \to b$ in $\mathcal{N}(\A)$.
Consequently, denoting $u_b^{\ast}$ to be the precise representative of $u_b$, one can estimate
\begingroup
\allowdisplaybreaks
\begin{align*}
    &\int_s^r \iint_{\partial B_t \times \partial B_t} \frac{\lvert u^{\ast}_b(x) - u^{\ast}_b(y) \rvert^p}{\lvert x - y \rvert^{n-1+\alpha p}} \d \sigma_x \d \sigma_y \d t \\
    &\hspace{1.5cm}\leq \liminf_{\varepsilon_j \searrow 0} \int_s^r \iint_{\partial B_t \times \partial B_t} \frac{\lvert u_{b,\varepsilon_j}(x) - u_{b,\varepsilon_j}(y) \rvert^p}{\lvert x - y \rvert^{n-1+\alpha p}} \d \sigma_x \d \sigma_y \d t \\
    &\hspace{1.5cm}\leq C \liminf_{\varepsilon_j \searrow 0} \int_{B_r} \int_{B_r} \frac{\lvert u_{b,\varepsilon_j}(x) - u_{b,\varepsilon_j}(y) \rvert^p}{\lvert x - y \rvert^{n+\alpha p}} \d \sigma_x \d \sigma_y \tag{by \eqref{eq:general-fubini-type-theorem}} \\
    &\hspace{1.5cm}\leq C \liminf_{\varepsilon_j \searrow 0} r^n \PARENS{r^{1-\alpha} \slashint_{B_{2r}} \lvert \A u_{\varepsilon_j} \rvert}^p \tag{by \eqref{eq:fubini-type-auxiliary-slobodeckij-estimate}} \\
    &\hspace{1.5cm}\leq C r^n \PARENS{r^{1 - \alpha} \slashint_{B_{2r}} \lvert \A u \rvert}^p
\end{align*}
\endgroup
Next, towards employing the auxiliary Lemma \ref{lem:auxiliary-lebesgue-point-estimate}, consider the set
\[
    J \coloneq \{ t \in (s,r) \mid \lvert \A u \rvert(\partial B_t) = 0 \}
\]
and let $g \colon (s,r) \to \R_{\geq 0}$ be defined by
\[
    g(t) \coloneq 
    \ccases{
    \begin{aligned}
    	&\iint_{\partial B_t \times \partial B_t} \frac{\lvert u^{\ast}_b(x) - u^{\ast}_b(y) \rvert^p}{\lvert x - y \rvert^{n-1+\alpha}} \d \sigma_x \d \sigma_y, &\quad& t \in J, \\[1ex]
    	&0, &\quad& \text{else.}
    \end{aligned}
    }
\]
Then, by Step 2 and Lemma \ref{lem:auxiliary-lebesgue-point-estimate}, there is a $t \in J$ such that
\[
    g^{\ast}(t) 
    \leq \frac{2}{r - s} \int_s^r g(t) \d t 
    \leq C \frac{r^n}{r-s} \PARENS{r^{1-\alpha} \slashint_{B_{2r}} \lvert \A u \rvert}^p.
\]
Now plugging the definition of $g(t)$ in the above inequality finally produces
\[
    \PARENS{\slashint_{\partial B_t} \int_{\partial B_t} \frac{\lvert u^{\ast}_b(x) - u^{\ast}_b(y) \rvert^p}{\lvert x - y \rvert^{n-1+\alpha p}} \d \sigma_x \d \sigma_y}^{\frac{1}{p}}
    \leq C \frac{r^{\frac{n}{p}} r^{1-\alpha}}{t^{\frac{n-1}{p}} (r-s)^{\frac{1}{p}}} \slashint_{B_{2r}} \lvert \A u \rvert,
\]
which is the desired estimate \eqref{eq:fubini-type-estimate}. Note that throughout the computations, the generic constant $C$ did not depend on $u$ or $r$, completing the proof.
\end{proof}
\section{$\mathcal{B}$-harmonic Approximation}

In this section, we are going to construct a $\mathcal{B}$-harmonic Approximation $h$ for a given local generalised minimser $u$. We will solve an elliptic system, which later on will be a linearisation of the Euler-Lagrange equation at a given average $(\A u)_{B}$, where $B\subset \Omega$ is a ball. We will assume that $u$ behaves nicely on the boundary $\partial B$ and we will give a precise estimation of $u-h$ in terms of the excess associated to $u$. The Ekeland variational principle allows us to prove a corresponding estimate for all minimisers close to $u$ and in the limit the estimate is inherited to $u$ itself. The precise control of $u-h$ will play a crucial role in the excess decay estimates.
\begin{theorem}
\label{theorem b-harmonic approximation}
Assuming \ref{itm:H0}, \ref{itm:H1} and \ref{itm:H2}, let $u\in \BVA_{\loc}(\Omega)$ be a generalised local minimiser of $\mathcal{F}$, let $M>0$, $q\in(1,\frac{n}{n-1})$, and let $B=B_{r}(x_{0})\Subset \Omega$ such that $u\restr{\partial B}=\Tr^{-}_{B}(u)=\Tr^{+}_{B}(u)\in \SOB^{\frac{1}{n+1},\frac{n+1}{n}}(\partial B)$. Let $a\colon \R ^{n}\to V$ be an arbitrary affine map with $|\A a|\leq M$. For $\mathcal{B}=D^{2}f(\A a) $, let $h\in \SOB^{1,\frac{n+1}{n}}(B)$ be the unique weak solution of the elliptic system
\begin{equation}
    \ccases{
    \begin{aligned}
    \A ^{*}(\mathcal{B} \A h) &= 0 &\quad& \text{in}\ B\\
     h &= u_{|\partial B}-a &\quad& \text{on}\ \partial B
    \end{aligned}
    }.
\end{equation}
Then there exists a constant $C = C(M,d_{V},d_{W},n,q,\frac{L}{\nu}) > 0$ such that
\[
    \slashint_B V \PARENS{\frac{u-a - h}{r}} \d x \leq C \PARENS{\slashint_B V(\A (u-a))}^q.
\]
\end{theorem}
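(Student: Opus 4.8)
The plan is to obtain the estimate by a contradiction/compactness argument combined with the Ekeland variational principle, following the scheme of \autocite{GK2019, Gme2020}. The obstacle to a direct linearisation is that $u$ is only a \emph{generalised} minimiser in $\BVA$, so we cannot test the Euler--Lagrange equation with arbitrary competitors; moreover the solution operator of the elliptic system is unbounded $\LEB^1 \to \SOB^{1,1}$, so a naive comparison $u-a$ versus $h$ fails. To circumvent this, consider on the Dirichlet class $X = (u-a)\restr{\partial B} + \SOB^{\A,1}_0(B)$, equipped with the metric $d(v,w) = \frac{1}{r}\slashint_B |v-w|\d x + \slashint_B |\A v - \A w|\d x$ (or a suitable $V_1$-weighted version that turns $X$ into a complete metric space and makes $\overline{\mathcal{F}}_{u}[\,\cdot\,;B]$ lower semicontinuous), the functional $\mathcal{G}[v] = \overline{\mathcal{F}}_{u}[v+a;B]$. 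Since $u-a$ is within $\epsilon \coloneq C\big(\slashint_B V(\A(u-a))\big)$ of the infimum over $X$ — this is exactly where the hypothesis $u\restr{\partial B}\in\SOB^{\frac1{n+1},\frac{n+1}{n}}$ and a smooth-extension argument (extending the boundary trace, cf.\ the gluing argument referenced after \ref{itm:H2}) enters to bound the energy gap by a power of the excess — Lemma \ref{lem:ekeland} produces an \emph{almost minimiser} $\tilde u \in X$ with $d(\tilde u, u-a) \leq \sqrt{\epsilon}$, $\mathcal{G}[\tilde u]\leq \mathcal{G}[u-a]$, and $\mathcal{G}[\tilde u]\leq \mathcal{G}[w] + \sqrt{\epsilon}\,d(\tilde u, w)$ for all $w$.

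The third Ekeland property says $\tilde u$ solves the Euler--Lagrange system \emph{up to an error of size $\sqrt\epsilon$} in a weak ($\SOB^{\A,1}$-dual) sense:
\begin{equation*}
    \Big| \slashint_B Df(\A a + \A\tilde u)\cdot \A\varphi \,\d x \Big| \leq C\sqrt\epsilon \, \slashint_B |\A\varphi|\,\d x \qquad \text{for all } \varphi\in\SOB^{\A,1}_0(B).
\end{equation*}
Next I would linearise $Df$ at $\A a$: writing $Df(\A a + \A\tilde u) = Df(\A a) + \mathcal{B}\A\tilde u + R(\A\tilde u)$ with $|R(\A\tilde u)|\leq C(M) V_1(\A\tilde u)$ by \ref{itm:H1} and \eqref{auxiliary estimate linearisation}, and using that $\slashint_B Df(\A a)\cdot\A\varphi = 0$ for $\varphi$ with zero trace, we obtain that $\tilde u$ is an approximate weak solution of $\A^*(\mathcal{B}\A \tilde u) = $ (error), with the error controlled by $\sqrt\epsilon$ plus $\slashint_B V_1(\A\tilde u)$. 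Comparing $\tilde u$ with the exact solution $h$ of the system with the \emph{same} boundary data $(u-a)\restr{\partial B}$ — note $\tilde u$ and $h$ share boundary trace only after we also compare with $u-a$ itself, so one first estimates $\slashint_B |\A(\tilde u - h)|$ via the linear theory and the smallness of the error, invoking Lemma \ref{lem:estimates elliptic stystems}(ii) and the $\SOB^{1,p}_0$ solvability to go from the dual estimate to an $\LEB^1$-estimate on the gradient, and then Poincaré \ref{lem: Poincaré} on $\tilde u - h$.

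Finally I would assemble: by the triangle inequality in the $V$-modular,
\begin{equation*}
    \slashint_B V\PARENS{\tfrac{u-a-h}{r}} \leq C\,\slashint_B V\PARENS{\tfrac{u-a-\tilde u}{r}} + C\,\slashint_B V\PARENS{\tfrac{\tilde u - h}{r}},
\end{equation*}
where the first term is controlled by $d(\tilde u, u-a)^2 \lesssim \epsilon$ using \eqref{refernece integrand quadratic growth} and $d\le\sqrt\epsilon$, and the second by the elliptic comparison just described together with Caccioppoli of the second kind, Proposition \ref{prop:caccioppoli}, applied to $\tilde u$ (which is itself a near-minimiser, so a perturbed Caccioppoli inequality holds) to trade $\slashint_B V(\tfrac{\tilde u}{r})$ against $\slashint_B V(\A\tilde u)$, and the latter against $\slashint_B V(\A(u-a))$ plus a vanishing error. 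Since $\epsilon = C\slashint_B V(\A(u-a))$, a small bootstrap — choosing the exponent $q\in(1,\frac{n}{n-1})$ to absorb the sub-quadratic loss in the higher-integrability estimate for $h$ from Lemma \ref{lem:bva-slobodeckij-embedding}/\ref{lem:estimates elliptic stystems}, exactly as in \autocite[Theorem 6.1]{Gme2020} — upgrades the linear bound in $\slashint_B V(\A(u-a))$ to the claimed $q$-th power. The main obstacle, as in the $\BD$-case, is the careful tracking of boundary-trace compatibility between $u-a$, $\tilde u$, and $h$ so that the unbounded solution operator is only ever applied to the fixed boundary datum, while all comparisons between interior functions are done through the $\SOB^{\A,1}_0$-valued perturbations where the $\C$-ellipticity (via Poincaré and the higher-integrability estimates) gives quantitative control.
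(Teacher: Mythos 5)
Your Ekeland skeleton (shifted integrand $(f)_{\A a}$, Dirichlet class with the trace of $u-a$, perturbed Euler--Lagrange equation, linearisation via \ref{auxiliary estimate linearisation}) coincides with the paper's proof, but the proposal has a genuine gap at the decisive point: where the exponent $q>1$ comes from. In the paper the Ekeland parameter $\varepsilon$ is an \emph{independent} quantity sent to $0$ at the end; since $u-a$ is a generalised minimiser and the area-strict approximation with fixed boundary trace makes the gap to $\inf_X$ arbitrarily small, nothing of the size of the excess survives from the Ekeland step. Your choice $\epsilon\coloneq C\fint_B V(\A(u-a))$ makes the term $\fint_B V\big(\tfrac{u-a-\tilde u}{r}\big)\lesssim d(\tilde u,u-a)^2\le\epsilon$ merely \emph{linear} in the excess, and no bootstrap can upgrade a linear bound to the claimed $q$-th power; a linear bound is useless in Lemma \ref{lemma: prelima deacy}, where the factor $\sigma^{-2}$ must be beaten by $(\excessn)^{q-1}$. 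In the paper the superlinear exponent is produced by a duality--truncation device that your plan omits: after scaling to $\mathbb B$ one sets $\Psi_\varepsilon$ to be the (scaled) difference $\tilde u_\varepsilon-h$, truncates it, solves the auxiliary system $\A^*(\mathcal B\A\Phi_\varepsilon)=\mathbf T\circ\Psi_\varepsilon$ with zero boundary values, and uses the $\SOB^{1,\infty}$-estimate of Lemma \ref{lem:estimates elliptic stystems}(ii) together with \ref{estimate elliptic system homogeneous with boundary datum}, so that testing with $\Psi_\varepsilon\in\SOB^{\A,1}_0(\mathbb B)$ yields $\int_{\mathbb B}V_1(\Psi_\varepsilon)\le C\big(\int_{\mathbb B}V_1(\A\text{-energy})+\varepsilon\big)\big(\int_{\mathbb B}V_1(\Psi_\varepsilon)\big)^{1/p}$; dividing, letting $\varepsilon\to0$ and scaling back gives precisely the power $q=p'$.

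Your substitute for this step -- ``estimate $\fint_B|\A(\tilde u-h)|$ via the linear theory and the smallness of the error, then Poincar\'{e}'' -- does not go through. From the perturbed equation you only control $\big|\int_B\mathcal B[\A(\tilde u-h),\A\zeta]\big|\le C\int_B V_1(\A\tilde u)\,|\A\zeta|+\sqrt{\epsilon}\int_B|\A\zeta|$ for zero-trace $\zeta$, and converting this weak information into an $\LEB^1$-bound on $\A(\tilde u-h)$ would require exactly the kind of $\LEB^1$-estimate for Legendre--Hadamard elliptic systems that fails here; it is the same obstruction (the solution operator is unbounded from $\LEB^1$ to $\SOB^{1,1}$, Ornstein-type phenomena) that forces the direct approach in the first place. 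The paper never estimates $\A(\tilde u_\varepsilon-h)$ at all: it estimates only the \emph{function} difference $\Psi_\varepsilon$, by pairing it with the bounded field $\A\Phi_\varepsilon$. Two further inaccuracies: the inequality you ascribe to Proposition \ref{prop:caccioppoli} (trading $\fint_B V(\tilde u/r)$ against $\fint_B V(\A\tilde u)$) is a Poincar\'{e}-type inequality and goes in the opposite direction to Caccioppoli; and the fractional regularity of $u\restr{\partial B}$ is not what bounds the infimum gap -- it is what guarantees solvability of the system for $h$ in $\SOB^{1,\frac{n+1}{n}}$ with the boundary estimates of Lemma \ref{lem:estimates elliptic stystems}(i).
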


\begin{proof}
We confine ourselves to merely sketching the proof, since it follows the lines of the proof \autocite[Proposition 5.4]{Gme2020}. To start with we fix some notation $\wtilde{u}=u-a$, $\wtilde{f}=(f)_{\A a}$ and  $X=\{v \in W^{\A,1}(B;\R^N) \mid \Tr(v) = \Tr(\wtilde{u})\}$.

Let $\varepsilon > 0$. There is a $u_{\epsilon} \in X$ such that

\[
   \fint _{B} \left|\frac{u_{\varepsilon}-\wtilde{u}}{r}  \right |+ \left | \fint V_{1}\left(\A u_{\varepsilon}\right)-V_{1}\left(\A \wtilde{u}\right)\right | \leq \varepsilon ^{2}, \qquad \fint_{B} \wtilde{f}(\A u_{\varepsilon}) \leq \fint_{B} \wtilde{f}(\A \wtilde{u}) +\varepsilon ^{2}.
\]
Noting that $\wtilde{u}$ is a local generalised minimiser of the functional $\mathcal{F}_{a}[\zeta;\omega]=\int_{\omega}\wtilde{f}(\A \zeta) \d x$ and taking proposition \ref{prop: continuity of functional} into account allows to apply the Ekeland variational principle \ref{lem:ekeland} to $(X,\d )$, $x=u_{\varepsilon}$ and $\mathcal{G}=\mathcal{F}_{a}$, where $\d (\zeta_{1},\zeta _{2})=\norm{\A(\zeta_{1}-\zeta_{2})}_{\LEB ^{1}(B)}$. This in conjunction with Poincaré's Inequality, yields $\wtilde{u}_{\varepsilon}\in X$ such that for all $\zeta\in \SOB ^{\A ,1}_{0}(B)$ we have
$$\Big|\int_{B} D\wtilde{f}(\A\wtilde{u}_{\varepsilon})[\A \zeta ]\d x \Big | \leq \varepsilon\int_{B}|\A \zeta |\d x\ \textnormal{and}\ \norm{u_{\varepsilon}-\wtilde{u}_{\varepsilon}}_{\SOB^{\A,1}(B)}\leq C\varepsilon (r^{n}+r^{n+1})$$
for a constant $C>0$ independent of $u_{\varepsilon}, \wtilde{u}_{\varepsilon}$ and $r$. Writing $\mathcal{B}[\wtilde{u}_{\varepsilon},\cdot]=\Lambda [\A \wtilde{u}_{\varepsilon}, \cdot] +D\wtilde{f}(\A \wtilde{u}_{\varepsilon})$ and applying the pointwise estimate \ref{auxiliary estimate linearisation} to the first term then yields for some constant $C(M,L)>0$ and all $\zeta \in \SOB^{\A, 1}_{0}(B)$
\begin{equation}
\label{auxiliary estimate harmonic approximation}
    \Big |\int_{B} \mathcal{B}[\A \wtilde{u}_{\varepsilon},\A \zeta]\d x \Big |\leq C \int_{B}V_{1}(\A \wtilde{u}_{\varepsilon})|\A \zeta |\d x+\varepsilon\int_{B}|\A \zeta |\d x .
\end{equation}

At this stage, we scale things to the unit ball $\mathbb{B}$: For a measurable map $p$ defined on $B$ we put $S[p](x)=r^{-1}p(x_{0}+rx)$. Furthermore, we put $\Psi_{\varepsilon}=S[\wtilde{u}_{\varepsilon}-h]$ and  $U_{\varepsilon}=S[\wtilde{u}_{\varepsilon}]$. We will now truncate the map $\Psi_{\varepsilon}$. To this aim, we put
\begin{equation*}
    \textbf{T}(w)=
    \begin{cases}
     w,\ &\norm{w}\leq 1\\
    \frac{w}{\norm{w}},\ &\norm{w}>1
    \end{cases}.
\end{equation*}
We fix $p>n$ and let $\Phi_{\varepsilon}\in \SOB^{1,\infty}\cap \SOB^{1,p}_{0})(\mathbb{B};V)$ be the solution of the elliptic system
\begin{equation}
    \ccases{
    \begin{aligned}
    \A ^{*}(\mathcal{B} \A \Phi_{\varepsilon}) &= \textbf{T}\circ \Psi _{\varepsilon} &\quad& \text{in}\ \mathbb{B}\\
     \Phi_{\varepsilon} &= 0&\quad& \text{on}\ \mathbb{S}
    \end{aligned}
    }.
\end{equation}
Now testing the system with $\Psi _{\varepsilon}\in \SOB^{\A ,1}_{0}(\mathbb{B})$ and exploiting \ref{auxiliary estimate harmonic approximation} yields the key estimation
\begin{equation*}
    \begin{split}
        \int_{\mathbb{B}}V_{1}(\Psi_{\varepsilon})\d x&\leq \int_{\mathbb{B}}\langle \textbf{T}\circ \Psi _{\varepsilon}, \Psi _{\varepsilon} \rangle \d x \\
        &=\int_{\mathbb{B}}\mathcal{B}[\A \Phi _{\varepsilon},\A \Psi _{\varepsilon}]\\
        &\leq C \big ( \int_{\mathbb{B}}V_{1}(\Psi _{\varepsilon}) \d x+ \varepsilon\big)\norm{\textbf{T}\circ \Psi _{\varepsilon}}_{\LEB ^{p}(B)}\\
        & \leq C \big ( \int_{\mathbb{B}}V_{1}(U _{\varepsilon}) \d x+ \varepsilon\big)\big(\int_{\mathbb{B}}V_{1}(\Psi _{\varepsilon})\d x \big)^{\frac{1}{p}}
    \end{split}
\end{equation*}
for a constant $C = C(M,d_{V},d_{W},n,q,\frac{L}{\nu})$. Note that in the penultimate step we have exploited the estimate \ref{estimate elliptic system homogeneous with boundary datum}. Dividing by $(\int_{\mathbb{B}}V_{1}(\Psi _{\varepsilon})\d x )^{\frac{1}{p}}$, sending $\varepsilon \to 0$, scaling back to the ball $B$ and setting $q=p^{\prime}$ concludes the proof.
\end{proof}
\section{Excess Decay}
In this section, we will display the most important step towards proving an excess decay for a given local generalised minimiser $u$. First, we will invoke Caccioppoli's Inequality and then, using our Fubini-type theorem for $\BVA$-maps, we will construct a $\mathcal{B}$-harmonic approximation. Then we will show separately that $h$ and $u-h$ have a good decay. Here, the excess of $u$ is defined as follows:
$$\excess(u,x,r)\coloneqq \int_{B_{r}(x)}V_{1}\left(\A u- (\A u)_{B_{r}(x)}\right)\ \quad\textnormal{and}\quad \excessn(u,x,r)\coloneqq\frac{\excess(u,x,r)}{\mathcal{L}^{n}(B_{r}(x)}. $$

\begin{lemma}
\label{lemma: prelima deacy}
Assuming \ref{itm:H0}, \ref{itm:H1} and \ref{itm:H2}, let $u\in\BVA_{\loc}(\Omega)$ be a generalised local minimiser of $\mathcal{F}$. Furthermore, let $M>0$ and $q\in (1,\frac{n}{n-1})$. There exists a constant $C(M,\A, q,d_{V},d_{W}, \frac{L}{\nu})>0$ with the property: If we have $B=B(x_{0},R)\Subset \Omega$, $|(\A u)_{B}|<M$ and $\fint _{B}|\A u -(\A u)_{B}|\leq 1$, then for all $\sigma \in (0,1]$ we have
\begin{equation}
    \excessn(u;x_{0},\sigma R)\leq c\Big(\sigma ^{2}+\sigma ^{-n-2}\big(\excessn(u;x_{0},R)\big)^{q-1} \Big)\excessn(u;x_{0},R).
\end{equation}
\end{lemma}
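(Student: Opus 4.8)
The argument realises the scheme \emph{Caccioppoli $\to$ $\mathcal{B}$-harmonic approximation $\to$ excess decay}, exactly as in \autocite{Gme2020}; I describe the steps and flag the one place where the $\C$-elliptic generality forces an extra device. Set $E\coloneqq\excessn(u;x_0,R)$. Since $V_1(t)\le t$, the hypothesis $\fint_B|\A u-(\A u)_B|\le 1$ forces $E\le 1$, and splitting the integral over $\{|\cdot|\le 1\}$ and $\{|\cdot|>1\}$ in \eqref{refernece integrand quadratic growth} yields $\fint_B|\A u-(\A u)_B|\le CE^{1/2}$. By Lemma \ref{lem: mean value} fix an affine map $a\colon\R^n\to V$ with $\A a\equiv(\A u)_B$, so $|\A a|<M$, and put $\mathcal{B}\coloneqq D^2 f((\A u)_B)$, which by \eqref{auxiliary estimate linearisation} is strongly $\A$-Legendre-Hadamard elliptic with ellipticity ratio depending only on $M$ and $L/\nu$. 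I will use repeatedly that, $a$ being affine, $\A u-(\A u)_{B_\rho}=\A(u-a)-(\A(u-a))_{B_\rho}$; combined with the re-centering inequality (a consequence of $V_1(|s|+|t|)\le 4(V_1(s)+V_1(t))$ and Jensen for the convex function $V_1(|\cdot|)$) this gives $\excessn(u;x_0,\rho)\le C\fint_{B_\rho}V_1(\A u-w)$ for any constant $w\in W$ and any ball $B_\rho=B_\rho(x_0)$.

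I would begin with the Fubini theorem. Applying Theorem \ref{thm:fubini-type} in the form \eqref{fubini type estimate lower dimensional} to $u-a$ (after extending $u$ to $\BVA(\R^n)$) produces a radius $t$ comparable to $R$ with $B_{2t}\Subset\Omega$ such that on $\partial B_t$ the interior and exterior traces of $u$ coincide, $u|_{\partial B_t}\in\SOB^{\frac1{n+1},\frac{n+1}{n}}(\partial B_t;V)$, and some $b\in\mathcal{N}(\A)$ satisfies
\[ [(u-a-b)|_{\partial B_t}]_{\SOB^{\frac1{n+1},\frac{n+1}{n}}(\partial B_t)}\le CR^{\frac{n^2}{n+1}}\fint_{B_{2t}}|\A u-(\A u)_B|\le CR^{\frac{n^2}{n+1}}E^{1/2}, \]
using $\A b=0$ and $B_{2t}\subset B_R$. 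Theorem \ref{theorem b-harmonic approximation}, applied to $u$ with the affine map $a$ on $B_t$, furnishes the unique $\mathcal{B}$-harmonic extension $h'$ of $(u-a)|_{\partial B_t}$ with $\fint_{B_t}V_1\bigl((u-a-h')/t\bigr)\le C\bigl(\fint_{B_t}V_1(\A(u-a))\bigr)^q\le CE^q$. Since $b\in\mathcal{N}(\A)$ is itself $\mathcal{B}$-harmonic, $h\coloneqq h'-b$ is the $\mathcal{B}$-harmonic extension of $(u-a-b)|_{\partial B_t}$, and writing $v\coloneqq u-a-b$ we get $v-h=u-a-h'$, so $\fint_{B_t}V_1\bigl((v-h)/t\bigr)\le CE^q$. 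Finally Corollary \ref{corollary: elliptic estimates} (on a ball of radius comparable to $t$), using $V_1(s)\le Cs^2$, gives for $A_h[x]\coloneqq h(x_0)+\langle\nabla h(x_0),x-x_0\rangle$ both $\int_{B_{2\sigma t}}V_1\bigl((h-A_h)/(\sigma t)\bigr)\le C\sigma^{n+2}t^nE$ for small $\sigma$, and $\sup_{B_{t/3}}|\nabla h|\le Ct^{-\frac{n^2}{n+1}}[(u-a-b)|_{\partial B_t}]\le CE^{1/2}$; feeding the latter into Lemma \ref{lem:estimates elliptic stystems}(iii) on $B_{t/3}$ yields $\sup_{B_{t/6}}|\nabla^2 h|\le Ct^{-1}E^{1/2}$, hence $\sup_{B_{\sigma t}}|\A(h-A_h)|\le C\sigma E^{1/2}$ by Taylor.

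Now fix $\sigma\in(0,\tfrac1{12}]$. Set $\psi\coloneqq a+A_h+b$; then $\A\psi$ is the constant $w_1\coloneqq(\A u)_B+\A A_h$, with $|\A\psi|\le M+C|\nabla h(x_0)|\le M+C$. The Caccioppoli inequality, Proposition \ref{prop:caccioppoli}, remains valid verbatim when the affine comparison map is replaced by any $\psi$ with $\A\psi$ constant and $|\A\psi|$ bounded, since its proof (hole-filling plus minimality plus strong quasiconvexity) uses only constancy of $\A\psi$. Combining this (on $B_{2\sigma t}$) with re-centering about $w_1$ yields
\[ \excessn(u;x_0,\sigma t)\le C\fint_{B_{\sigma t}}V_1(\A(u-\psi))\le C\fint_{B_{2\sigma t}}V_1\PARENS{\frac{u-\psi}{2\sigma t}}. \]
The decisive point is that $u-\psi=(u-a-b)-A_h=(v-h)+(h-A_h)$: the null-space element has cancelled. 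Splitting $V_1$ of the sum and inserting the two bounds from the previous paragraph — the $h-A_h$-term contributing $\le C\sigma^2E$, and the $v-h$-term, via $2\sigma t<t$, $V_1(rs)\le r^2V_1(s)$ for $r>1$, and enlarging the domain to $B_t$, contributing $\le C\sigma^{-2}\tfrac{|B_t|}{|B_{2\sigma t}|}\fint_{B_t}V_1\bigl((v-h)/t\bigr)\le C\sigma^{-n-2}E^q$ — we obtain $\excessn(u;x_0,\sigma t)\le C(\sigma^2+\sigma^{-n-2}E^{q-1})E$. Since $t$ is comparable to $R$, relabelling $\sigma$ gives the assertion for all small $\sigma$; for $\sigma$ bounded below it is immediate from $\excessn(u;x_0,\sigma R)\le C\sigma^{-n}\excessn(u;x_0,R)$ (re-centering on $B_R$).

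The conceptual architecture is standard and the analysis is reduced to the quoted ingredients. The step I expect to cause the only real difficulty beyond the $\BD$-case of \autocite{Gme2020} is the handling of the Fubini null-space element $b$: for a general $\C$-elliptic $\A$ — think of the trace-free symmetric gradient in dimension $\ge 3$ — $\mathcal{N}(\A)$ contains non-affine polynomials, so $b$ cannot simply be merged into the affine map $a$. The resolution is to take the $\mathcal{B}$-harmonic comparison map $h$ with the \emph{corrected} boundary datum $(u-a-b)|_{\partial B_t}$, so that the Fubini seminorm estimate — and with it the $\mathcal{B}$-harmonic decay — is genuinely controlled by the excess, while running the Caccioppoli inequality against the $\A$-affine polynomial $\psi=a+A_h+b$; these two choices are compatible precisely because $\A b=0$. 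The rest is bookkeeping: selecting $t$ with $B_{2t}\Subset\Omega$, and passing between the scales $\sigma t$ and $\sigma R$.
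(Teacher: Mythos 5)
Your proposal is correct and follows essentially the same route as the paper: Lemma \ref{lem: mean value} for $a$, the Fubini-type theorem to pick a good sphere together with the null-space correction $b$, the $\mathcal{B}$-harmonic approximation with $\mathcal{B}=D^2f((\A u)_B)$, subtraction of $b$ from the harmonic comparison map so that the elliptic decay is controlled by the seminorm of $u-a-b$, and Caccioppoli against $a+A_{h-b}+b$ with the splitting $u-\psi=(v-h)+(h-A_h)$. Your "Caccioppoli for $\A\psi$ constant" step is exactly the paper's observation that $u-b$ is again a generalised local minimiser (so the stated affine version applies to $u-b$ and $a+A_{h-b}$), and your extra care with the $E^{1/2}$ bookkeeping, the choice $B_{2t}\subset B_R$, and the trivial large-$\sigma$ case only tightens the same argument.
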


\begin{proof}
We are going to set up the proof as follows:
\begin{itemize}
    \item Due to Lemma \ref{lem: mean value} we find $a\in\mathcal{L}(\R ^{n};V)$ such that $\A a =(\A u)_{B}$. We put $\wtilde{u}=u-a$, $\wtilde{f}=(f)_{(\A u)_{B}}$ and $\mathcal{B}=D^{2}\wtilde{f}(0)$.
    \item Due to Theorem \ref{thm:fubini-type} there exists a radius $r\in (\frac{38R}{40},\frac{39R}{40})$ such that $\wtilde{u}\restr{\partial B_r(x_0)}$ is well-defined and belongs to the space $\SOB^{\frac{1}{n+1}, \frac{n+1}{n}}(\partial \tilde{B})$ with $\tilde{B}=B_{r}$. Furthermore, for some $b\in\mathcal{N}(\A)$, we have
    \begin{equation}
    \label{auxiliary estimate epsiolon decay harmonic map}
        [\wtilde{u}_{b}]_{\SOB^{\frac{1}{n+1}, \frac{n+1}{n}}(\partial \tilde{B})}\leq C\fint_{B_{2r}(x_{0})}|\A \wtilde{u}|\leq C \excessn(u;x_{0},R)
    \end{equation}
    for a constant $C(\A, n, \alpha)$.
    \item Let $h$ be the solution of the elliptic system 
    \begin{equation*}
    \ccases{
    \begin{aligned}
    \A ^{*}(\mathcal{B} \A h) &= 0 &\quad& \text{in}\ \tilde{B}\\
     h &= u_{|\partial \tilde{B}}-a &\quad& \text{on}\ \partial \tilde{B}
    \end{aligned}
    }.
    \end{equation*}
    \item In view of Corollary \ref{corollary: elliptic estimates}, we put $a_{0}=a+A_{h-b}$ and let $\sigma \in (0,\frac{1}{10})$ be arbitrary but fixed. Since we have $\fint_{B}V_{1}(\A \wtilde{u})\leq 1$, we may estimate by means of Lemma \ref{lem:linearisation and refernece integrand} and Jensen's Inequality
    \begin{equation}
    \label{auxiliary estimate epsilon decay}
        V_{1}\Big(\sigma \fint_{B}\A \wtilde{u}\Big)\leq C \sigma ^{2}\Big(\fint_{B}\A \wtilde{u}\Big)^{2}\leq C \sigma ^{2}V_{1}\Big(\fint_{B}\A \wtilde{u}\Big) \leq C\sigma ^{2}\excessn(u;x_{0},R).
    \end{equation}
    Combining the estimates \ref{auxiliary estimate epsilon decay}, \ref{auxiliary estimate harmonic map} and \ref{auxiliary estimate epsiolon decay harmonic map} yields 
    \begin{equation}
    \label{aux estimate harmonic map final}
        \int _{B_{2\sigma R}}V_{1}\Big(\frac{h-b-A_{h-b}}{2\sigma R}\Big)\leq CR^{n}\sigma ^{n+2}\excessn(u;x_{0},R).
    \end{equation}
\end{itemize}
We note that we have due to Corollary \ref{corollary: elliptic estimates} $|\A a_{0}|\leq M_{0}+C\fint_{B_{2r}}|\A \wtilde{u}|\leq M_{0}+C\coloneqq m $ for a constant $C(m,\A, d_{V},d_{W},\frac{L}{\nu})$. Especially, $u-b$ is a generalised local minimiser as well. By Caccioppoli's Inequality, Proposition \ref{prop:caccioppoli}, we estimate
\begin{equation*}
 \begin{split}
    \excess (u;x_{0},\sigma R)&\leq C\int _{B_{2\sigma R}}V_{1}\Big(\frac{u-b-a_{0}}{\sigma R}\Big)\\
    &\leq C \Big\{\sigma ^{-2}\int _{B_{\frac{r}{2}}}V_{1}\Big(\frac{\wtilde{u}-h}{r}\Big)+\int _{B_{2\sigma R}}V_{1}\Big(\frac{h-b-A_{h-b}}{2\sigma R}\Big)\Big\}\\
    &\leq C\Big\{\sigma ^{-2} R^{n}(\excessn(u,x_{0}, R))^{q}+R^{n}\sigma ^{n+2}\excess(u,x_{0},2 R)\Big\}
\end{split}
\end{equation*}
for a constant $C(m,\A, d_{V},d_{W},\frac{L}{\nu})$. In the second step, we have exploited Theorem~\ref{theorem b-harmonic approximation} and the estimate \ref{aux estimate harmonic map final}.
\end{proof}

\begin{proposition}
\label{proposition: epsilon decay}
Assuming \ref{itm:H0}, \ref{itm:H1} and \ref{itm:H2}, let $u\in\BVA_{\loc}(\Omega)$ be a generalised local minimiser of $\mathcal{F}$, where $f$ satisfies \ref{itm:H1} and \ref{itm:H2}. Let $\alpha \in (0,1)$ and $M>0$. Then there exist constants $\gamma(M,\A, \alpha,d_{V},d_{W} , \frac{L}{\nu})$ and $\varepsilon (M,\A, \alpha,d_{V},d_{W} , \frac{L}{\nu}) >0 $ with the property: If we have $B=B(x_{0},R)\Subset \Omega$, $|(\A u)_{B}|<M$ and $\excessn (u;x_{0};R)< \varepsilon$, then we have for all $\vartheta\in (0,1)$:
\begin{equation}
  \excessn (u;x_{0};\vartheta R)\leq \gamma  \vartheta ^{\alpha}\excessn (u;x_{0};R).
\end{equation}
\end{proposition}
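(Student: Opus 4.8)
The plan is to derive the $\vartheta^\alpha$-decay by iterating the preliminary one-step estimate from Lemma \ref{lemma: prelima deacy} on a geometric sequence of radii. First I fix $\alpha \in (0,1)$ and an exponent $q \in (1, \frac{n}{n-1})$; since $q > 1$, the term $\big(\excessn(u;x_0,R)\big)^{q-1}$ appearing in Lemma \ref{lemma: prelima deacy} can be made small by hypothesis. The strategy is the standard Campanato-type iteration: pick a fixed dilation parameter $\sigma = \vartheta_0 \in (0,\frac{1}{10})$ small enough that, in the estimate
$$
\excessn(u;x_0,\sigma R) \leq c\Big(\sigma^2 + \sigma^{-n-2}\big(\excessn(u;x_0,R)\big)^{q-1}\Big)\excessn(u;x_0,R),
$$
the leading term satisfies $c\,\sigma^2 \leq \tfrac12 \sigma^\alpha$ (possible because $\alpha < 2$), and then choose $\varepsilon > 0$ so small that whenever $\excessn(u;x_0,R) < \varepsilon$ we also have $c\,\sigma^{-n-2}\varepsilon^{q-1} \leq \tfrac12 \sigma^\alpha$. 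With these choices the one-step estimate becomes $\excessn(u;x_0,\sigma R) \leq \sigma^\alpha \,\excessn(u;x_0,R)$.

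The next step is to check that the smallness hypothesis is preserved under the iteration. Two things must be controlled along the sequence $R_k = \sigma^k R$: the smallness of the excess, and the bound $|(\A u)_{B_{R_k}}| < M$ needed to reapply the lemma. The first is immediate: since $\sigma^\alpha < 1$, the excess contracts, so $\excessn(u;x_0,R_k) \leq \sigma^{k\alpha}\varepsilon < \varepsilon$ for all $k$. For the second, I estimate
$$
\big|(\A u)_{B_{R_{k+1}}} - (\A u)_{B_{R_k}}\big| \leq \fint_{B_{R_{k+1}}}\big|\A u - (\A u)_{B_{R_k}}\big| \leq \sigma^{-n}\fint_{B_{R_k}}\big|\A u - (\A u)_{B_{R_k}}\big|,
$$
and the right-hand side is controlled by (a dimensional constant times the square root of, via \eqref{refernece integrand quadratic growth} applied in the regime where the excess is below $1$) $\excessn(u;x_0,R_k)^{1/2} \lesssim \sigma^{k\alpha/2}\varepsilon^{1/2}$. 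Summing the geometric series $\sum_k \sigma^{k\alpha/2}$ shows the averages $(\A u)_{B_{R_k}}$ form a Cauchy sequence whose total displacement from $(\A u)_B$ is bounded by $C\varepsilon^{1/2}$; shrinking $\varepsilon$ once more so that $|(\A u)_B| + C\varepsilon^{1/2} < M$ keeps every $|(\A u)_{B_{R_k}}| < M$, so the lemma applies at every scale. Iterating then gives $\excessn(u;x_0,\sigma^k R) \leq \sigma^{k\alpha}\,\excessn(u;x_0,R)$ for all $k \in \N$.

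Finally I interpolate between the discrete radii. For arbitrary $\vartheta \in (0,1)$, choose $k$ with $\sigma^{k+1} \leq \vartheta < \sigma^k$. A cheap comparison of averages on $B_{\vartheta R} \subset B_{\sigma^k R}$ gives
$$
\excessn(u;x_0,\vartheta R) \leq \sigma^{-n}\,\excessn(u;x_0,\sigma^k R) \leq \sigma^{-n}\sigma^{k\alpha}\,\excessn(u;x_0,R) \leq \sigma^{-n-\alpha}\vartheta^\alpha\,\excessn(u;x_0,R),
$$
so the claim holds with $\gamma \coloneq \sigma^{-n-\alpha}$, which depends only on $M, \A, \alpha, d_V, d_W, \frac{L}{\nu}$ through the choice of $\sigma$. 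I expect the only genuinely delicate point to be the bookkeeping in the second step — showing that the hypothesis $|(\A u)_{B_{R_k}}| < M$ survives all iterations — since Lemma \ref{lemma: prelima deacy} requires the average to stay strictly below $M$ while the freedom to shrink $\varepsilon$ must be exercised a finite number of times in the right order; everything else is the routine Campanato iteration.
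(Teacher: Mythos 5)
Your overall scheme --- fix $q\in(1,\tfrac{n}{n-1})$, choose a single dilation $\sigma$ with $c\sigma^2\leq\tfrac12\sigma^\alpha$, choose $\varepsilon$ so that $c\sigma^{-n-2}\varepsilon^{q-1}\leq\tfrac12\sigma^\alpha$, iterate Lemma \ref{lemma: prelima deacy} along $R_k=\sigma^kR$, control the drift of the means $(\A u)_{B_{R_k}}$ by a geometric series of square roots of the excess, and interpolate to arbitrary $\vartheta$ --- is exactly the standard Campanato iteration that the paper invokes by reference to \autocite[Proposition 4.8]{GK2019} and \autocite[Proposition 5.7]{Gme2020}, so in spirit you are doing the same proof. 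However, the step you yourself flag as delicate is, as written, genuinely wrong: you cannot ``shrink $\varepsilon$ once more so that $|(\A u)_B|+C\varepsilon^{1/2}<M$''. The hypothesis only gives the strict inequality $|(\A u)_B|<M$, and $|(\A u)_B|$ may be arbitrarily close to $M$, while $\varepsilon$ is required to depend only on $M,\A,\alpha,d_V,d_W,\tfrac L\nu$ and not on the gap $M-|(\A u)_B|$. So no admissible choice of $\varepsilon$ keeps every $|(\A u)_{B_{R_k}}|<M$, and with your bookkeeping Lemma \ref{lemma: prelima deacy} cannot be reapplied at the later scales. The standard repair is to run the iteration with the constants of Lemma \ref{lemma: prelima deacy} taken for the parameter $2M$ (or $M+1$): your drift estimate gives $|(\A u)_{B_{R_k}}|\leq |(\A u)_B|+C\varepsilon^{1/2}<M+C\varepsilon^{1/2}\leq 2M$ once $\varepsilon$ is small depending only on $M$, and constants depending on $2M$ are still of the admissible form. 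With that modification your argument closes.

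Two smaller points you should also make explicit. First, Lemma \ref{lemma: prelima deacy} additionally requires $\fint_{B_{R_k}}|\A u-(\A u)_{B_{R_k}}|\leq 1$ at every scale; this follows from the smallness of $\excessn(u;x_0,R_k)$ by the same $V_1$-versus-$L^1$ comparison (via \eqref{refernece integrand quadratic growth} and the linear lower bound of $V_1$ for large arguments) that you already use for the drift of the means, but it is part of the hypotheses and must be checked. Second, in the final interpolation the ``cheap comparison of averages'' is not quite free: passing from the mean $(\A u)_{B_{\vartheta R}}$ to $(\A u)_{B_{\sigma^kR}}$ uses the quasi-triangle inequality for $V_1$ together with Jensen's inequality, which costs a fixed multiplicative constant in addition to the volume factor $\sigma^{-n}$; this only changes $\gamma$ by an absolute factor and is harmless, but the inequality $\excessn(u;x_0,\vartheta R)\leq\sigma^{-n}\excessn(u;x_0,\sigma^kR)$ as stated is not literally true.
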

The proof is analogous to the proof of \autocite[Proposition 4.8]{GK2019} or \autocite[Proposition 5.7]{Gme2020} since we already have established Lemma \ref{lemma: prelima deacy}.
At this stage, we sketch the proof of the main theorem. We will pay special attention to the final step, which needs to be modified in regards to \autocite{Gme2020}:
\begin{proof}
Using \autocite[1.6.1, Theorem 1; 1.6.2, Theorem 3]{EG1992}, we observe that $\mathcal{L}^{n}(\Sigma _{u})=0$.

Let $x_{0}\in \Omega\setminus \Sigma_{u}$ and $\alpha \in (0,1)$. For $M>0$, we denote by $\gamma_{M}$ and $\varepsilon_{M}$ the constants determined by Proposition \ref{proposition: epsilon decay}. Then there exists $M>0$ and a radius $R>0$ with $B_{R}(x_{0})\Subset \Omega$ such that for all $x\in \tilde{B}=B_{\frac{R}{2}}(x_{0})$ we have
$$\left|(\A u _{\tilde{B}})\right|\leq M \quad\textnormal{and}\quad \excessn\Big(u;x;\frac{R}{2}\Big)\leq  \varepsilon_{M}.$$
This can be inferred from a similar argument to the one in \autocite[32]{Gme2020}. In particular, we then have for all $x\in B=B_{\frac{R}{4}}(x_{0})$ and all $0<r<\frac{R}{2}$:
$$\excessn(u;x;r)\leq \gamma _{M}\varepsilon_{M}\Big(\frac{2}{R}\Big)^{\alpha}r^{\alpha}.$$
From this decay, it can be deduced by a simple covering argument that the measure $|\A u \mres{B}|$ is absolutely continuous with respect to the Lebesgue measure.

Towards the Hölder continuity of the distributional gradient $Du$, we invoke Campanato's characterisation \autocite[Theorem 5.1]{Cam1964}: Let $k=\dim_{\R} \mathcal{N}(\A )\geq 1$ and $q$ be a linear $V$-valued polynomial such that $\A q =(\A u)_{B(x,r)}$. Furthermore, we put $p=q+\Pi(u-q)\in \mathcal{P}_{k}(\R ^{N})$. Now estimate for all $x\in B$ and all $0<r<\frac{R}{2}$
$$\fint_{B(y,r)}|u-p|\d x\leq C_{Poin} r \fint_{B(y,r)}|\A (u-q)|\d x\leq C r^{1+\alpha}.$$
\end{proof}

\clearpage
\printbibliography

\end{document}